\newlength{\temp@wc@width}
\newlength{\temp@wc@height}
\newcommand{\widecheck}[1]{%
  \setlength{\temp@wc@width}{\widthof{$#1$}}%
  \setlength{\temp@wc@height}{\heightof{$#1$}}%
  #1\hspace{-\temp@wc@width}%
  \raisebox{\temp@wc@height+1pt}[\heightof{$\widehat{#1}$}]%
     {\rotatebox[origin=c]{180}{\vbox to 0pt{\hbox{$\widehat{\hphantom{#1}}$}}}}%
}
  \crefname{theorem}{Theorem}{Theorems}
  \crefname{lemma}{Lemma}{Lemmas}
  \crefname{remark}{Remark}{Remarks}
  \crefname{proposition}{Proposition}{Propositions}
  \crefname{definition}{Definition}{Definitions}
  \crefname{corollary}{Corollary}{Corollaries}
  \crefname{section}{Section}{Sections}
  \crefname{figure}{Figure}{Figures}
\newtheorem{theorem}{Theorem}[]
\newtheorem{proposition}[theorem]{Proposition}
\newtheorem{lemma}[theorem]{Lemma}
\newtheorem{corollary}[theorem]{Corollary}
\theoremstyle{definition}
\def\v{{\mathcal V}}
\def\ve{\varepsilon}
\def\wt{\widetilde}
\def\bd{{\bf d}}
\def\la{\longrightarrow}
\def\R{{\mathbb R}}
\def\P{{\mathbb P}}
\def\E{{\mathbb E}}
\def\N{{\mathbb N}}
\def\build#1_#2^#3{\mathrel{\mathop{\kern 0pt#1}\limits_{#2}^{#3}}}
\title{Typical behavior of the harmonic measure in critical Galton--Watson trees}
\author{Shen LIN 
\thanks{Supported in part by the grant ANR-14-CE25-0014 (ANR GRAAL)} \\
\small \it Ecole Normale Sup\'erieure, Paris, France \\
\small \textit{E-mail}: \texttt{shen.lin.math@gmail.com}
}
\date{}
\begin{document}

\maketitle

\begin{abstract}
We study the typical behavior of the harmonic measure of balls in large critical Galton--Watson trees whose offspring distribution has finite variance. The harmonic measure considered here refers to the hitting distribution of height $n$ by simple random walk on a critical Galton--Watson tree conditioned to have height greater than~$n$. We prove that, with high probability, the mass of the harmonic measure carried by a random vertex uniformly chosen from height $n$ is approximately equal to $n^{-\lambda}$, where the constant $\lambda >1$ does not depend on the offspring distribution. This universal constant $\lambda$ is equal to the first moment of the asymptotic distribution of the conductance of size-biased Galton--Watson trees minus 1.

\smallskip
\noindent {\bf Keywords:} size-biased Galton--Watson tree, harmonic measure, uniform measure, simple random walk and Brownian motion on trees.

\smallskip
\noindent{\bf AMS MSC 2010:} 60J80, 60G50, 60K37. 
\end{abstract}

\section{Introduction}
Let $\theta$ be a non-degenerate probability measure on $\mathbb{Z}_{+}$, and assume that $\theta$ has mean one and finite variance $\sigma^2>0$. Under the probability measure $\P$, for every integer $n\geq 0$, we let $\mathsf{T}^{(n)}$ be a Galton--Watson tree with offspring distribution $\theta$, conditioned on non-extinction at generation~$n$. From our assumption that $\theta$ is critical (i.e.~of mean 1), $\mathsf{T}^{(n)}$ is a.s.~a finite tree. We denote by $\mathsf{T}^{(n)}_n$ the set of all vertices of $\mathsf{T}^{(n)}$ at generation $n$. The classical Yaglom's theorem in the theory of branching processes states that $n^{-1}\# \mathsf{T}^{(n)}_n$ converges in distribution to an exponential distribution with parameter $2/\sigma^2$ (see e.g.~Theorem 9.2 in Chapter 1 of \cite{AN}).

Conditionally on the tree $\mathsf{T}^{(n)}$, we consider a simple random walk on $\mathsf{T}^{(n)}$ starting from the root. Let $\Sigma_n$ be the first hitting point of generation $n$ by the random walk. We call the distribution of $\Sigma_{n}$ the harmonic measure~$\mu_n$ at level $n$, which is a random probability measure supported on the level set $\mathsf{T}^{(n)}_n$. The main result of the present work is the following: 

\begin{theorem}\label{thm:main-dis}
Let $\Omega_n$ be a random vertex uniformly chosen from $\mathsf{T}^{(n)}_n$. There exists a universal constant $\lambda\in (1,\infty)$, which does not depend on the offspring distribution $\theta$, such that for every $\delta>0$,
\begin{eqnarray}\label{eq:thm-main}
\lim_{n\to \infty}\P \Big( n^{-\lambda-\delta} \leq \mu_n(\Omega_n) \leq n^{-\lambda+\delta} \Big)= 1.
\end{eqnarray}
\end{theorem}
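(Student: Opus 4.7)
My approach would follow the spinal-decomposition strategy that has proven successful for harmonic-measure problems on Galton--Watson trees: describe the tree $\mathsf{T}^{(n)}$ from the perspective of the uniformly chosen vertex $\Omega_n$, express $\mu_n(\Omega_n)$ as a product of local contributions along the spine from the root to $\Omega_n$, and then extract the asymptotic rate via a careful analysis of the resulting product.

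The first ingredient is a spinal decomposition. Letting $(v_0 = \emptyset, v_1, \ldots, v_n = \Omega_n)$ be the ancestral spine of $\Omega_n$, and using that $\Omega_n$ is uniform on $\mathsf{T}^{(n)}_n$, a many-to-one identity (combined with Yaglom's theorem to control the normalizing factor $\#\mathsf{T}^{(n)}_n$) yields an explicit distributional description of $(\mathsf{T}^{(n)}, \Omega_n)$: along the spine, vertices have a size-biased offspring law $\hat\theta$, and the side-bushes hanging off the spinal vertices are, given the spine, independent trees whose laws depend only on the heights $k$ and $n-k$ and stabilize as $\min(k, n-k) \to \infty$.

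The second ingredient is a conductance factorization. Equip $\mathsf{T}^{(n)}$ with unit conductances and identify all vertices of $\mathsf{T}^{(n)}_n$ into a single grounded point $\partial$, so that $\mu_n$ is the exit distribution of SRW at $\partial$. For each vertex $v$, let $\cc_v$ denote the effective conductance from $v$ to $\partial$ computed inside the subtree of descendants of $v$. Unrolling the strong Markov property at the successive first-passage times of the spine gives a factorization
$$\mu_n(\Omega_n) = \prod_{k=0}^{n-1} \beta_k,$$
where $\beta_k$ is an explicit rational function of $\cc_{v_{k+1}}$ and of the conductances of the side-bushes at $v_k$. After suitable rescaling, $\cc_{v_k}$ converges in distribution to a universal limit $C_\infty$, the asymptotic conductance in a size-biased GW tree. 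Substituting this limit into the factorization and applying a concentration argument (a second-moment bound using the asymptotic independence of spinal factors at well-separated heights) yields
$$-\log \mu_n(\Omega_n) = \lambda \log n + o_{\mathbb{P}}(\log n), \qquad \lambda = \mathbb{E}[C_\infty] - 1,$$
which is equivalent to the statement of the theorem.

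The main obstacle is the identification of $\lambda$ and the proof of its universality. Computing $\mathbb{E}[C_\infty]$ requires solving a distributional fixed-point equation for the conductance along the spine of a size-biased GW tree, and then showing that its first moment does not depend on the specific offspring law $\theta$. Universality is expected to come from a scaling-limit argument: after rescaling heights by $n$, $\mathsf{T}^{(n)}$ converges to Aldous's continuum random tree, and the SRW (after time rescaling) converges to Brownian motion on this continuous tree; the constant $\lambda$ should then be interpretable as an intrinsic quantity of this universal continuum object, independent of $\theta$.
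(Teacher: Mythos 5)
Your plan is aligned with the paper's strategy at a high level: use a spinal decomposition to replace $(\mathsf{T}^{(n)},\Omega_n)$ by the truncated size-biased tree with its spinal vertex (this is exactly Proposition~\ref{prop:size-bias-palm}), express $\mu_n(\Omega_n)$ via a product of spinal factors involving subtree conductances, identify the limiting conductance $\widehat{\mathcal{C}}$ with $\lambda=\E[\widehat{\mathcal{C}}]-1$, and close with a second-moment argument. But there is a genuine gap in how the $\log n$ scale is supposed to emerge from your proposed product over all $n$ spinal steps.

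If you write $\mu_n(\Omega_n)=\prod_{k=0}^{n-1}\beta_k$ with $\beta_k$ determined by the side-bush conductances at $v_k$, then the distributional stabilization of the rescaled conductances $(n-k)\,\cc_{v_k}$ only tells you that the typical factor $\beta_k$ is asymptotically close to $1$: for most $k$, no side bush at $v_k$ reaches level $n$ at all, so that step contributes essentially nothing. A naive Law of Large Numbers for $\sum_k \log\beta_k$ therefore cannot produce a rate of order $\log n$, and it is not clear from your sketch where that scale would come from. The paper's key structural observation, which you do not have, is that only the generations $M_1<M_2<\cdots$ at which at least one off-spine subtree reaches the target level contribute non-trivially, and their count $k_n$ grows like $2\log n$ almost surely (Lemma~\ref{lem:kn-asymptotic}). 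After reducing to the backward picture, the quantity $p_{k_n}=P^{\check{\mathsf{T}}}_{M_{k_n}}(X_{S_0}=\mathbf{u}_0,S_0<\Pi_{M_{k_n+1}})$ factorizes as $p_1\exp(-\sum_{j=2}^{k_n}Q_j)$ over these $O(\log n)$ jump indices, and the crux (Lemma~\ref{lem:sum-L2-conv}) is an $L^2$-convergence $\frac{1}{k}\sum_{j=2}^kQ_j\to\lambda/2$, giving $-\log p_{k_n}\approx k_n\cdot\lambda/2\approx\lambda\log n$. Your second-moment idea is the right tool, but it has to be applied to this reduced, $O(\log n)$-length product, and you need the estimate on $k_n$ to convert from counting jumps to powers of $n$.

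A second technical device you omit, and which makes the $L^2$ analysis tractable, is the \emph{backward} size-biased Galton--Watson tree $\widecheck{\mathsf{T}}$ of Section~\ref{sec:backward-size-biased}: viewing the size-biased tree from the target vertex downward turns the sequence of jump levels $M_k$ and the associated conductances into a homogeneous Markov chain, so one can control the covariances of the $Q_j$ and prove the second-moment bound. Finally, your closing remark that universality comes from a CRT scaling limit is morally in the same spirit as the paper's identification of $\lambda$ with a quantity attached to the continuous reduced tree, but in the actual discrete proof the value of $\lambda$ is pinned down through the recursive distributional equation~(\ref{eq:c*-rde}) and the integration-by-parts identity~(\ref{eq:g-c-c*}), not through a full invariance principle for the walk.
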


Loosely speaking, if we look at a typical vertex at level $n$ of the conditional critical Galton--Watson tree $\mathsf{T}^{(n)}$, then as shown by (\ref{eq:thm-main}), it carries with high probability a mass of order $n^{-\lambda}$ given by the harmonic measure $\mu_n$. Recall that the number of vertices of $\mathsf{T}^{(n)}$ at generation $n$ is of order $n$ according to Yaglom's theorem. Since $\lambda>1$, our main theorem clearly indicates that the harmonic measure $\mu_n$ is far from being uniformly spread over $\mathsf{T}^{(n)}_n$. 

To be more precise, by the definition of $\Omega_n$, the convergence (\ref{eq:thm-main}) can be rewritten as
\begin{equation*}
\lim_{n\to \infty} \E\Bigg[ \frac{1}{\#\mathsf{T}^{(n)}_n} \sum\limits_{v \in \mathsf{T}^{(n)}_n} \mathbf{1}\{ \mu_n(v) >n^{-\lambda+\delta} \mbox { or }  \mu_n(v) <n^{-\lambda-\delta} \} \Bigg] =0.
\end{equation*}
Using the theorem of Yaglom, it is easy to see that the preceding convergence is equivalent to
\begin{equation}
\label{eq:11}
\lim_{n\to \infty} \frac{1}{n}\,\E\Bigg[ \sum\limits_{v \in \mathsf{T}^{(n)}_n} \mathbf{1}\{ \mu_n(v) >n^{-\lambda+\delta} \mbox { or }  \mu_n(v) <n^{-\lambda-\delta} \}\Bigg] =0.
\end{equation}
We take $\delta\in (0,\lambda-1)$, and define $A_n\colonequals \{v\in \mathsf{T}^{(n)}_n\colon \mu_n(v)> n^{-\lambda+\delta}\}$. The convergence (\ref{eq:11}) implies that for every $\varepsilon>0$,
\begin{displaymath}
\P\Big(\frac{\# A_n}{n}>\varepsilon\Big) \build{\la}_{n\to \infty}^{} 0.
\end{displaymath}
On the other hand, by the definition of $A_n$, we have $\mu_n(v)\leq n^{-\lambda+\delta}$ for any vertex $v\in \mathsf{T}^{(n)}_n\backslash A_n$, and it follows that $\mu_n(\mathsf{T}^{(n)}_n\backslash A_n)\leq n^{-\lambda+\delta}\# \mathsf{T}^{(n)}_n$. Using again Yaglom's theorem, we get that
\begin{displaymath}
\P\Big(\mu_n(\mathsf{T}^{(n)}_n\backslash A_n)>\varepsilon \Big) \build{\la}_{n\to \infty}^{} 0.
\end{displaymath}
Therefore, it holds with probability tending to 1 as $n\to \infty$ that, up to a mass of size $\varepsilon$, the harmonic measure $\mu_n$ is supported on a subset of $\mathsf{T}^{(n)}_n$ of cardinality smaller than $\varepsilon n$. This simple consequence of Theorem~\ref{thm:main-dis} has already been observed in a recent paper of Curien and Le Gall~\cite{CLG13}, where, instead of looking at a vertex typical for the tree, they have considered a vertex typical for the harmonic measure. In Theorem~1 of~\cite{CLG13}, they have shown the existence of another universal constant $\beta\in (0,1)$ independent of the offspring distribution~$\theta$, such that for every $\delta>0$, we have the convergence in $\P$-probability
\begin{equation}
\label{eq:theorem1-CLG}
 \mu_n\big( \{v\in \mathsf{T}_n^{(n)} \colon n^{-\beta-\delta} \leq \mu_n(v) \leq n^{-\beta+\delta}\} \big) \xrightarrow[n\to\infty]{(\P)}  1.
\end{equation}
In other words, the typical mass given by the harmonic measure $\mu_n$ to a vertex at level $n$ drawn with respect to the harmonic measure is with high probability of order $n^{-\beta}$ with $\beta<1$, and we can thus say that most of the harmonic measure $\mu_n$ is supported on a subset of size approximately equal to $n^\beta$, which is much smaller than the size of $\mathsf{T}^{(n)}_n$. One may think of both results, \eqref{eq:thm-main} and \eqref{eq:theorem1-CLG}, as the first steps towards a complete analysis of the multifractal nature of the harmonic measure in critical Galton--Watson trees.

As pointed out by Curien and Le Gall in~\cite{CLG13}, for studying the harmonic measure at level $n$ on $\mathsf{T}^{(n)}_n$, we can consider directly simple random walk on the reduced tree $\mathsf{T}^{*n}$, which consists of all vertices of $\mathsf{T}^{(n)}$ that have at least one descendant at generation $n$. Moreover, if we scale the graph distances by $n^{-1}$, the rescaled discrete reduced trees $n^{-1}\mathsf{T}^{*n}$ converge to a random compact rooted $\R$-tree $\Delta$, whose structure is described as follows. We take a random variable $U_{\varnothing}$ uniformly distributed over $[0,1]$, and start with an oriented line segment of length $U_{\varnothing}$, whose origin will be the root of $\Delta$. To the other end of this initial line segment, we attach the initial points of two new line segments with respective lengths $U_1$ and $U_2$, in such a way that, conditionally given $U_{\varnothing}$, the variables $U_1$ and $U_2$ are independent and uniformly distributed over $[0,1-U_{\varnothing}]$. To the other end of the first of these 2 line segments, we attach two line segments whose lengths are independent and uniformly distributed over $[0,1-U_{\varnothing}-U_1]$, again conditionally on $U_{\varnothing}$ and~$U_1$. We repeat this procedure independently for the second line segment with $U_1$ replaced by~$U_2$. We continue this construction by induction, and after an infinite number of steps we obtain a random non-compact rooted $\R$-tree $\Delta_0$ with binary branching (see Figure~\ref{figure:delta2}), whose completion with respect to its intrinsic metric $\mathbf{d}$ is called the continuous reduced tree $\Delta$. We assume that $\Delta$ is also defined under the probability measure $\P$. Its boundary $\partial \Delta$ is the set of all points of $\Delta$ at height 1, i.e.~at distance 1 from the root. 

Brownian motion on $\Delta$ starting from the root can be easily defined up to the first hitting time of $\partial \Delta$. Roughly speaking, this process behaves in the same way as a standard linear Brownian motion as long as it remains inside a line segment. It is reflected at the root of the tree $\Delta$, and when it arrives at a branching point, it chooses each of the three line segments incident to this point with equal probabilities. The (continuous) harmonic measure $\mu$ on $\partial \Delta$ is defined as the (quenched) distribution of the first hitting point of $\partial \Delta$ by Brownian motion.

\begin{figure}[!h]
\begin{center}
\includegraphics[width=13cm]{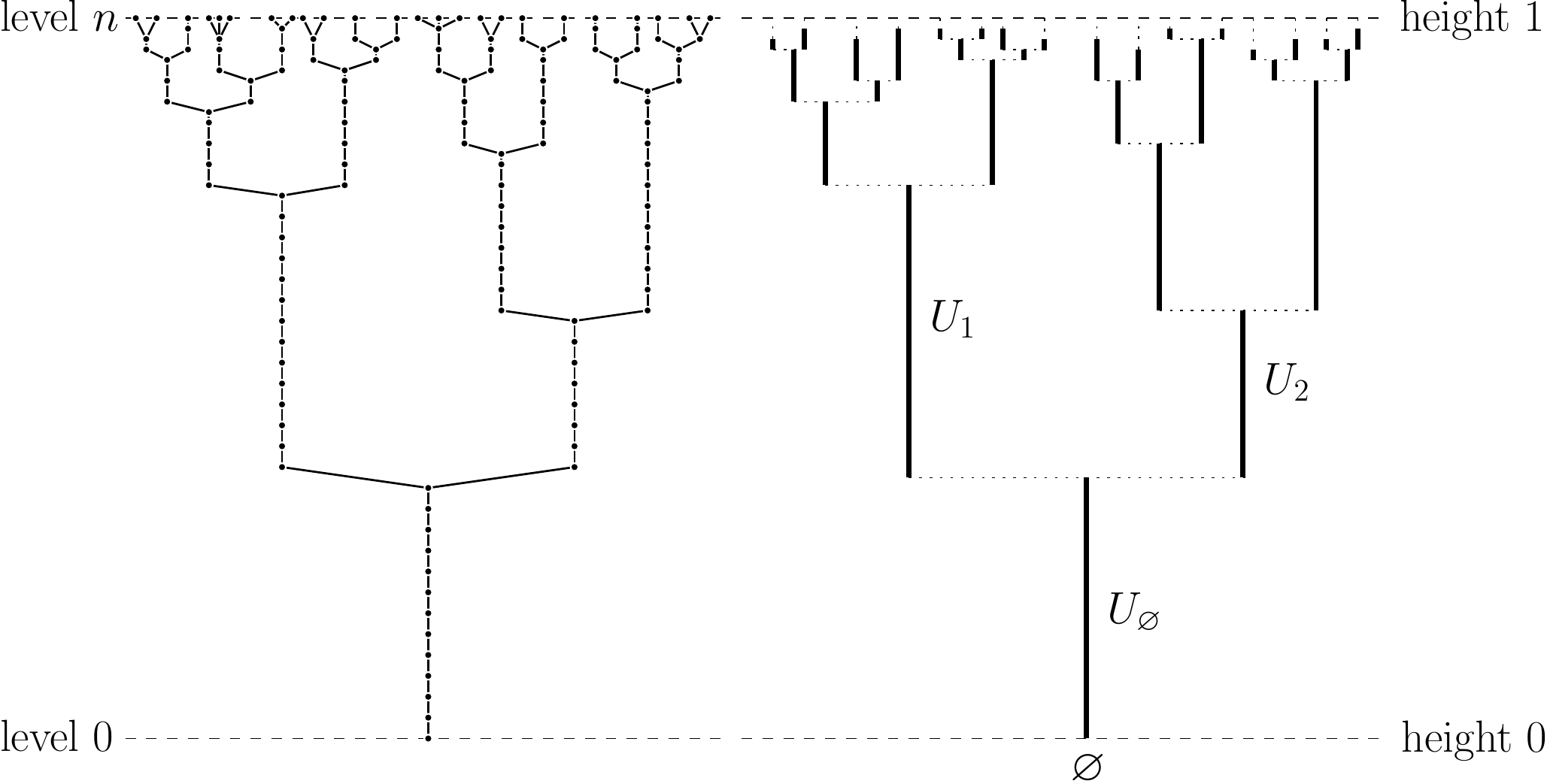}
\caption{A large reduced tree $\mathsf{T}^{*n}$ of height $n$ on the left, and the random tree $\Delta_0$ on the right\label{figure:delta2}}
\end{center}
\end{figure}

We then define another (non-compact) random rooted $\R$-tree $\Gamma$ with binary branching, such that each point of $\Gamma$ at height $y\in [0,\infty)$ corresponds to a point of $\Delta_0$ at height $1-e^{-y}\in [0,1)$. The resulting new tree $\Gamma$ is the Yule tree which describes the genealogy of the classical Yule process, where individuals have independent exponential lifetimes with parameter 1 and each individual has exactly two offspring. By definition, the boundary $\partial \Gamma$ of $\Gamma$ is the set of all infinite geodesics in $\Gamma$ starting from the root (these are called geodesic rays). Due to the binary branching mechanism, both $\partial \Delta$ and $\partial \Gamma$ can be canonically identified with $\{1,2\}^{\N}$.

For every $r>0$, we write $\Gamma_r$ for the level set of $\Gamma$ at height $r$. By a martingale argument, one can define
\begin{displaymath}
\mathcal{W}\colonequals \lim_{r\to \infty} e^{-r}\# \Gamma_r,
\end{displaymath}
and it is well known that $\mathcal{W}$ follows an exponential distribution of parameter 1. For every $x\in \Gamma$, we let $H(x)$ denote the height of $x$ in $\Gamma$, and we write $\Gamma[x]$ for the tree of descendants of $x$ in $\Gamma$, viewed as an infinite random $\R$-tree rooted at $x$. For every $r>0$, we write $\Gamma_r[x]$ for the level set at height $r$ of the tree $\Gamma[x]$. If one thinks of $\Gamma[x]$ as a subtree of $\Gamma$, the set $\Gamma_r[x]$ consists of all the points of $\Gamma$ at height $r+H(x)$ that are descendants of $x$. We similarly define
\begin{displaymath}
\mathcal{W}_{x}\colonequals \lim_{r\to \infty} e^{-r}\# \Gamma_r[x].
\end{displaymath}
It is immediate to see that for every $r>0$,
\begin{displaymath}
\sum\limits_{x\in \Gamma_r} e^{-r}\mathcal{W}_x= \mathcal{W}.
\end{displaymath}
The uniform measure $\bar \omega$ on $\partial \Gamma$ is defined as the unique random probability measure on $\partial \Gamma$ satisfying that, for every $x\in \Gamma$ and for every geodesic ray $\mathbf{v}\in \partial \Gamma$ passing through $x$,
\begin{displaymath}
\bar \omega(\mathcal{B}(\mathbf{v},H(x)))=  e^{-H(x)}\frac{\mathcal{W}_x}{\mathcal{W}},
\end{displaymath}
where $\mathcal{B}(\mathbf{v},H(x))$ stands for the set of all geodesic rays in $\Gamma$ that coincide with $\mathbf{v}$ up to height $H(x)$. In earlier work, $\bar \omega$ is also named as the branching measure on the boundary of $\Gamma$. Recall that $\partial \Delta$ can be identified with $\partial \Gamma$ as explained above. We let $\omega$ be the random probability measure on $\partial \Delta$ induced by $\bar \omega$, which will be referred to as the uniform measure on $\partial \Delta$.

\begin{theorem}\label{thm:main-continu}
With the same constant $\lambda$ as in Theorem~\ref{thm:main-dis}, we have $\P$-a.s.~$\omega(\mathrm{d}\mathbf{v})$-a.e.
\begin{eqnarray}
\lim_{r\downarrow 0} \frac{\log \mu (\mathcal{B}_{\bd}(\mathbf{v},r))}{\log r} &= &\lambda \,, \label{eq:loc-dim-harm}\\
\lim_{r\downarrow 0} \frac{\log \omega (\mathcal{B}_{\bd}(\mathbf{v},r))}{\log r} &=  &1 \,,\label{eq:loc-dim-unif}
\end{eqnarray}
where $\mathcal{B}_{\bd}(\mathbf{v},r)$ stands for the closed ball of radius $r$ centered at $\mathbf{v}$ in the metric space $(\Delta,\bd)$.
\end{theorem}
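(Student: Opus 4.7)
My plan is to work on the Yule tree $\Gamma$ via the canonical identification $\partial\Delta\cong\partial\Gamma$ and to exploit the stationary ergodic structure of $\Gamma$ along its spine. Fix $\mathbf v\in\partial\Gamma$ and let $y_1,y_2,\ldots$ denote the successive branching points on its ray, with heights $H_k:=H(y_k)$. Since two rays $\mathbf v,\mathbf w\in\partial\Delta$ are at $\bd$-distance $2(1-s)$, where $s$ is the height in $\Delta$ of their most recent common ancestor, the ball $\mathcal B_{\bd}(\mathbf v,r)$ is, up to rounding to the nearest branching point on the ray of $\mathbf v$, the cone of rays in $\Gamma$ agreeing with $\mathbf v$ up to height $-\log(r/2)$. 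Thus $r\mapsto\omega(\mathcal B_{\bd}(\mathbf v,r))$ and $r\mapsto\mu(\mathcal B_{\bd}(\mathbf v,r))$ are step functions that jump at the discrete radii $r_k:=2e^{-H_k}$, and it is enough to establish each limit along $k\to\infty$, using the law of large numbers $H_k/k\to c_h\in(0,\infty)$ for the i.i.d.~exponential spine increments in $\Gamma$.

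\textbf{Proof of \eqref{eq:loc-dim-unif}.} By the defining property of $\bar\omega$ recalled just above the theorem,
\begin{equation*}
\omega(\mathcal B_{\bd}(\mathbf v,r_k))=e^{-H_k}\,\mathcal W_{y_k}/\mathcal W.
\end{equation*}
Taking logs and dividing by $\log r_k=\log 2-H_k$ reduces \eqref{eq:loc-dim-unif} to showing that $\log\mathcal W_{y_k}/H_k\to 0$ a.s.~under the joint law $\P(\mathrm d\Gamma)\,\omega(\mathrm d\mathbf v)$. Under this joint law, the standard spine/size-biasing decomposition of the Yule tree makes $(\mathcal W_{y_k})_{k\geq 0}$ stationary with marginal a size-biased exponential and hence with finite moments of all orders of $|\log\mathcal W_{y_k}|$; a Borel--Cantelli argument combined with $H_k/k\to c_h$ gives the required convergence.

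\textbf{Proof of \eqref{eq:loc-dim-harm} and the main obstacle.} Applying the strong Markov property of Brownian motion on $\Delta$ at the successive $y_j$, one gets the telescoping product
\begin{equation*}
\mu(\mathcal B_{\bd}(\mathbf v,r_k))=\prod_{j=1}^{k}\eta_j,
\end{equation*}
where $\eta_j\in(0,1)$ is the conditional probability that Brownian motion, given that it has entered the subtree rooted at $y_{j-1}$ on the $\mathbf v$-side, further exits in the smaller subtree rooted at $y_j$; this factor is an explicit rational function of the effective conductances to the boundary of the two subtrees above $y_{j-1}$. Under $\P\otimes\omega$, the spine decomposition makes the local environment at $y_{j-1}$ into a stationary and ergodic sequence in $j$, so provided that $\E[|\log\eta_1|]<\infty$ Birkhoff's theorem yields $\tfrac1k\sum_{j=1}^k\log\eta_j\to -c$ a.s.~for some constant $c>0$; combined with $H_k/k\to c_h$ this produces $\log\mu(\mathcal B_{\bd}(\mathbf v,r))/\log r\to c/c_h$ $\omega$-a.e. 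The two hard parts are (i) proving the integrability estimate $\E[|\log\eta_1|]<\infty$, which reduces to two-sided tail bounds on the invariant conductance distribution on $\Gamma[y]$, and (ii) identifying $c/c_h$ with the universal constant $\lambda$ of Theorem~\ref{thm:main-dis}. Step (ii) is where the $\theta$-independence of $\lambda$ becomes manifest: expressing $c$ in terms of the first moment of the invariant conductance law matches the description of $\lambda$ given in the abstract as the first moment of the asymptotic conductance of size-biased Galton--Watson trees minus $1$, a purely continuous (hence universal) quantity.
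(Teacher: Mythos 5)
Your overall strategy coincides with the paper's: identify $\partial\Delta$ with $\partial\Gamma$, work along the spine of the ray $\mathbf v$ via the size-biased decomposition of the Yule tree, telescope $\mu$ and $\omega$ over successive branching points, and apply the ergodic theorem to the resulting additive cocycles, finally dividing by the height cocycle $H_n$ (with $H_n/n\to 1/2$). The paper implements this cleanly by writing down the explicitly $S$-invariant measure $\mathcal W(T)\,\Theta(\mathrm dT)\,\bar\omega_T(\mathrm d\mathbf v)$ (Proposition~\ref{prop:unif-meas-inv}) and applying Birkhoff three times to $H_n$, $F_n$, $G_n$.

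There are, however, genuine gaps in your sketch. You invoke ``stationary and ergodic'' without establishing either; stationarity is Proposition~\ref{prop:unif-meas-inv} and ergodicity requires a separate argument (the paper cites one). You treat $\E[|\log\eta_1|]<\infty$ as a hypothesis; this is not difficult but does rely on the finiteness of all moments of the conductance laws $\gamma$, $\widehat\gamma$ (Proposition~\ref{prop:c*-law}(2)). Most seriously, your step~(ii) is circular: what the ergodic theorem actually delivers as the local dimension of $\mu$ is the quantity $2\,\E[\log((\widehat{\mathcal C}+\mathcal C)/\widehat{\mathcal C})]$ (with $\widehat{\mathcal C}$, $\mathcal C$ independent), and identifying this with $\E[\widehat{\mathcal C}]-1$ is a genuine computation requiring the identity~(\ref{eq:g-c-c*}) specialized to $g=\log$, which in turn rests on the recursive distributional equation~(\ref{eq:c*-rde}) for $\widehat{\mathcal C}$. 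You cannot justify this by appeal to the abstract's description of $\lambda$, because that is precisely the equality to be proved. You also never establish $\lambda>1$ (the paper does so by a strict concavity argument comparing the $\mathcal W$- and $\mathcal C$-ratios), which, while technically part of Proposition~\ref{prop:lambda-expression}, is essential to the significance of the two local-dimension statements being distinct.
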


\remark The Hausdorff measure of $\partial \Delta$ with respect to $\mathbf{d}$ is a.s.~equal to 1. An exact Hausdorff measure function can be found in Duquesne and Le Gall~\cite[Theorem 1.3]{DLG06}.

\begin{corollary}
\label{corol-intro}
$\P$-a.s.~the two measures $\mu$ and $\omega$ on the boundary of $\Delta$ are mutually singular.
\end{corollary}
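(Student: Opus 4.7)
The plan is to use the Lebesgue decomposition $\mu = \mu_{ac} + \mu_s$ of $\mu$ with respect to $\omega$, where $\mu_{ac} \ll \omega$ and $\mu_s \perp \omega$, and to show that $\mu_{ac}=0$; this is equivalent to the desired mutual singularity. The quantitative input comes from the sharp contrast between the two local dimensions in Theorem~\ref{thm:main-continu}: since $\lambda > 1$, combining \eqref{eq:loc-dim-harm} and \eqref{eq:loc-dim-unif} gives, $\P$-a.s.\ and for $\omega$-a.e.\ $\mathbf{v}$,
\begin{displaymath}
\frac{\mu(\mathcal{B}_{\bd}(\mathbf{v},r))}{\omega(\mathcal{B}_{\bd}(\mathbf{v},r))} = r^{\lambda - 1 + o(1)} \build{\la}_{r \downarrow 0}^{} 0.
\end{displaymath}
So if I can identify $(d\mu_{ac}/d\omega)(\mathbf{v})$ at $\omega$-a.e.\ $\mathbf{v}$ with $\lim_{r\downarrow 0} \mu_{ac}(\mathcal{B}_{\bd}(\mathbf{v},r))/\omega(\mathcal{B}_{\bd}(\mathbf{v},r))$, then the trivial bound $\mu_{ac} \leq \mu$ will force the density to vanish $\omega$-a.e., and hence $\mu_{ac}=0$.

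The differentiation step is the only non-trivial point, and I would handle it by exploiting the fact that $(\partial \Delta, \bd)$ is an \emph{ultrametric} space. Indeed, in the binary tree $\Delta_0$, for any three boundary points two of their three pairwise most recent common ancestors necessarily coincide, so two of the three pairwise $\bd$-distances coincide and attain the maximum. Consequently, for each $h \in [0,1)$, the boundary descendant sets of the vertices of $\Delta_0$ at height $h$ form a partition $\mathcal{P}_h$ of $\partial \Delta$ whose elements are precisely the closed $\bd$-balls of radius $2(1-h)$. Picking any sequence $h_n \uparrow 1$, the partitions $\mathcal{P}_{h_n}$ refine, the associated $\sigma$-algebras $\mathcal{F}_n \colonequals \sigma(\mathcal{P}_{h_n})$ increase to the Borel $\sigma$-algebra of $\partial \Delta$, and Doob's martingale convergence theorem applied to the uniformly integrable martingale $\mathbb{E}_\omega[d\mu_{ac}/d\omega \mid \mathcal{F}_n]$ yields the pointwise representation required above, with $r = 2(1-h_n)$.

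The main (indeed only) obstacle is this differentiation step: in a generic metric setting one would need a Vitali-type or doubling condition on $\omega$, which is not at all clear here. The ultrametric structure bypasses the difficulty by providing a canonical refining sequence of partitions whose atoms are \emph{themselves} the balls appearing in Theorem~\ref{thm:main-continu}, reducing the argument to the standard increasing-filtration version of martingale convergence. Once this is in hand, the rest is immediate.
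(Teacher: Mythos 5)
Your proof is correct, but it takes a genuinely different route from the paper's. The paper invokes Theorem~3 of~\cite{CLG13}, which gives the $\mu$-a.e.\ local dimension of $\mu$ as $\beta$, and then simply observes that the Borel set $B$ on which that local dimension equals $\beta$ satisfies $\mu(B)=1$ while, by~\eqref{eq:loc-dim-harm} and the fact that $\beta\neq\lambda$, also $\omega(B)=0$; this exhibits mutual singularity in two lines without any decomposition or differentiation argument. Your approach instead works entirely inside Theorem~\ref{thm:main-continu} of the present paper: combining~\eqref{eq:loc-dim-harm} and~\eqref{eq:loc-dim-unif} with $\lambda>1$ gives $\mu(\mathcal{B}_{\bd}(\mathbf{v},r))/\omega(\mathcal{B}_{\bd}(\mathbf{v},r))\to0$ for $\omega$-a.e.\ $\mathbf{v}$, and you then convert this ratio statement into a statement about the Radon--Nikodym derivative of the absolutely continuous part $\mu_{ac}$. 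The differentiation step is handled correctly: $(\partial\Delta,\bd)$ is ultrametric, the sets of boundary rays through the points of $\Delta_0$ at height $h$ form a finite partition of $\partial\Delta$ into closed balls of radius $2(1-h)$, these partitions refine as $h\uparrow1$ and generate the Borel $\sigma$-algebra, and Doob's theorem applied to the closed martingale $\E_\omega[\,\mathrm{d}\mu_{ac}/\mathrm{d}\omega\mid\mathcal{F}_n\,]$ identifies the $\omega$-a.e.\ limit of $\mu_{ac}(\mathcal{B}_{\bd}(\mathbf{v},r_n))/\omega(\mathcal{B}_{\bd}(\mathbf{v},r_n))$ with $\mathrm{d}\mu_{ac}/\mathrm{d}\omega(\mathbf{v})$; together with $\mu_{ac}\leq\mu$ this forces $\mu_{ac}=0$. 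The trade-off is clear: the paper's argument is shorter and needs no differentiation theory, but leans on an external result about the $\mu$-typical local dimension; yours is self-contained relative to Theorem~\ref{thm:main-continu} (it only uses $\lambda>1$, not $\beta\neq\lambda$) at the cost of setting up the martingale machinery, which the ultrametric structure makes painless. Both are valid.
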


In fact, with the same constant $\beta$ as in~(\ref{eq:theorem1-CLG}), it is shown in Theorem 3 of~\cite{CLG13} that $\P$-a.s.~$\mu(\mathrm{d}\mathbf{v})$-a.e.,
\begin{displaymath}
\lim_{r\downarrow 0} \frac{\log \mu (\mathcal{B}_{\bd}(\mathbf{v},r))}{\log r} = \beta.
\end{displaymath}
If we define
\begin{displaymath}
B=\big\{\mathbf{v}\in \partial \Delta \colon \lim_{r\downarrow 0} \frac{\log \mu (\mathcal{B}_{\bd}(\mathbf{v},r))}{\log r} = \beta \big\},
\end{displaymath}
then $\P$-a.s.~$\mu(B)=1$. However, since $\beta<1<\lambda$, we get from~(\ref{eq:loc-dim-harm}) that $\P$-a.s.~$\omega(B)=0$, which finishes the proof of Corollary \ref{corol-intro}.

Similar results for supercritical infinite Galton--Watson trees can be found in Theorem~3 of Liu and Rouault~\cite{LR97} and in Theorem 6.3 of Lyons, Pemantle and Peres~\cite{LPP95}, where the uniform measure and the visibility measure (defined as the law of the geodesic ray chosen by \emph{forward} simple random walk) on the boundary of the infinite tree are considered.

In order to get a better understanding of the distinguished geodesic ray in the Yule tree~$\Gamma$ chosen randomly according to the uniform measure $\bar \omega(\mathrm{d}\mathbf{v})$, we follow the ideas of Lyons, Pemantle and Peres~\cite{LPP95b} to construct a size-biased version $\widehat \Gamma$ of $\Gamma$, which is the genealogical tree of the following branching process. Initially, there is one particle having an exponential lifetime with parameter~2, and it reproduces two offspring simultaneously when it dies. We choose one of them uniformly at random and the chosen one will continue as the initial ancestor, while the other offspring will independently evolve as the classical Yule process. The size-biased Yule tree $\widehat \Gamma$ thus defined is an infinite random $\R$-tree with binary branching. Applying to $\widehat \Gamma$ the same transformation that relates $\Gamma$ and $\Delta_0$, we get a bounded (yet non-compact) rooted $\R$-tree $\widehat \Delta_0$, which is interpreted as the size-biased version of $\Delta_0$. Essentially, every point of $\widehat \Gamma$ at height $y\geq 0$ corresponds to a point of $\widehat \Delta_0$ at height $1-e^{-y}$. The completion of $\widehat \Delta_0$ with respect to its intrinsic metric is denoted as $\widehat \Delta$, and we call $\widehat \Delta$ the size-biased reduced tree. Its boundary $\partial \widehat\Delta$ is similarly defined as the set of all points of $\widehat \Delta$ at height 1. 

\begin{figure}[!h]
\begin{center}
\includegraphics[width=7.5cm]{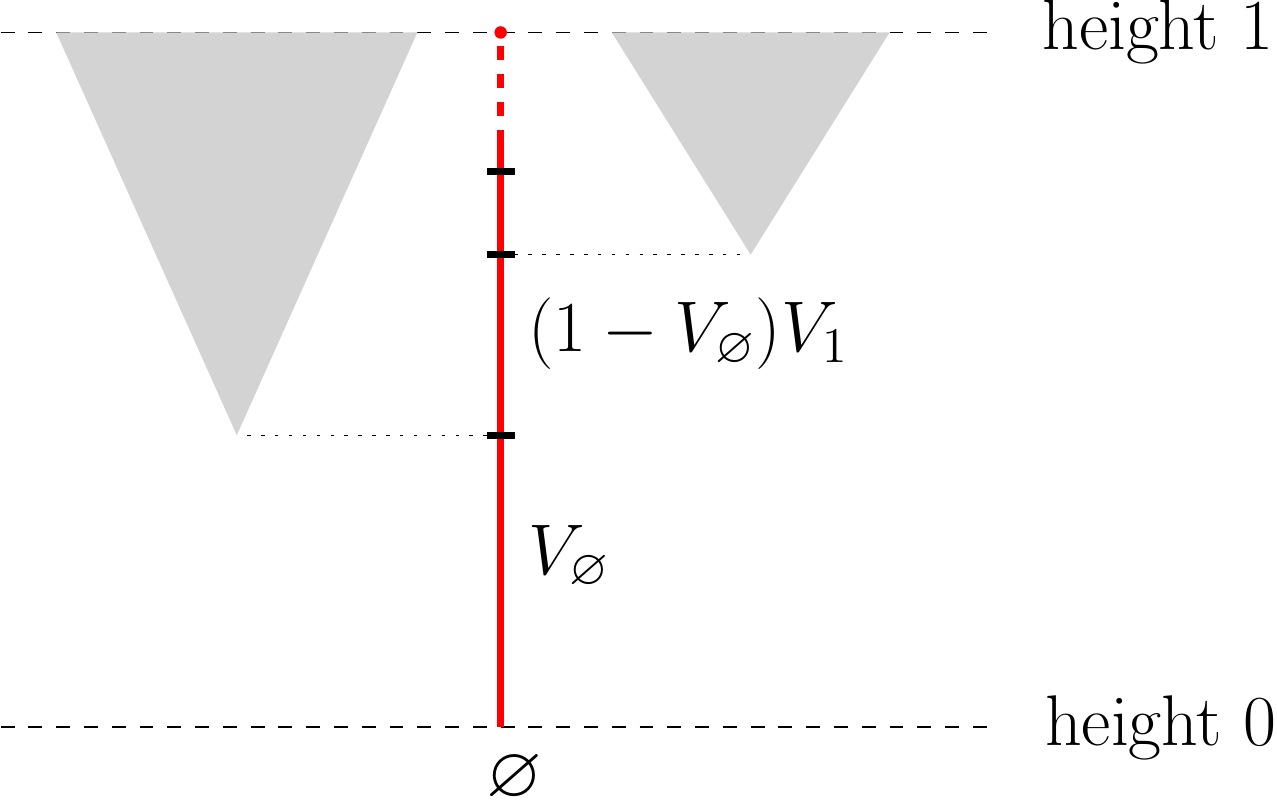}
\caption{Schematic representation of the size-biased reduced tree $\widehat \Delta$ \label{figure:bias}}
\end{center}
\end{figure}

Due to the previous description of $\widehat \Gamma$, one can also construct $\widehat \Delta$ directly as follows. At first, the root $\varnothing$ of $\widehat \Delta$ has a distinguished descendant line of length 1. Let $V_{\varnothing}$ be a random variable taking values in $[0,1]$ with density $2(1-x)$, and we graft to the distinguished descendant line at height $V_{\varnothing}$ a subtree which is an independent copy of $\Delta$ scaled by the factor $(1-V_{\varnothing})$. In the second step, we take $V_1$ as an independent copy of $V_{\varnothing}$ and graft to the distinguished descendant line at height $V_{\varnothing}+(1-V_{\varnothing})V_1$ another independent copy of $\Delta$ scaled by the factor $(1-V_{\varnothing})(1-V_1)$. Note that for each grafting, we choose the left-hand side or the right-hand side of the distinguished descendant line with equal probabilities. We continue this procedure to graft more subtrees to the distinguished descendant line, with the height of the grafting position increasing to 1. After an infinite number of steps we obtain a realization of $\widehat \Delta$. See Figure~\ref{figure:bias} for an illustration. We assume as before that $\widehat \Delta$ is defined under the probability measure~$\P$.

The constant $\lambda$ appearing in Theorems~\ref{thm:main-dis} and~\ref{thm:main-continu} can be expressed in terms of the (continuous) conductance of $\widehat \Delta$. Informally, if we think of the random trees $\Delta$ and $\widehat \Delta$ as electric networks of resistors with unit resistance per unit length, the effective conductances between the root and the boundary in $\Delta$ and $\widehat \Delta$ are continuous random variables denoted respectively as $\mathcal{C}$ and~$\widehat{\mathcal{C}}$. From a probabilistic point of view, each of these conductances can be obtained as the mass under the Brownian excursion measure in the corresponding tree for the excursion paths away from the root that hit height~1. It is easy to see that both $\mathcal{C}$ and $\widehat{\mathcal{C}}$ take values in $[1,\infty)$. The law of $\mathcal{C}$ has been studied at length in~\cite{CLG13}. Following the above construction of $\widehat \Delta$ and the electric network interpretation, the distribution of $\widehat{\mathcal{C}}$ satisfies the recursive distributional equation
\begin{equation}
\label{eq:c*-rde}
\widehat{\mathcal{C}}\,\overset{(\mathrm{d})}{=\joinrel=}\, \Big(V + \frac{1-V}{\widehat{\mathcal{C}}+ \mathcal{C}}\Big)^{-1},
\end{equation}
where in the right-hand side $V,\mathcal{C}$ and $\widehat{\mathcal{C}}$ are independent, and the distribution of the random variable $V$ has density $2(1-x)$ over $[0,1]$. Using this distributional identity \eqref{eq:c*-rde}, we prove that, similarly as the law of $\mathcal{C}$, the law $\widehat \gamma$ of $\widehat{\mathcal{C}}$ has finite moments of all orders, and it has a continuous density $\widehat f$ over $[1,\infty)$, which reaches its global maximum at $3/2$. The density function $\widehat f$ exhibits a singular behavior analogous to that of the density function of~$\mathcal{C}$ (see~\cite[Section 2.3]{CLG13}). Although $\widehat f$ is twice continuously differentiable on the interval $(1,3)$, it is shown that $\widehat f$ is not third-order differentiable at the point~2. A similar singular behavior is expected at all integer points $n\geq 2$. See Figure~\ref{figure:conductancebias}.

\begin{figure}[!h]
\begin{center}
\includegraphics[width=11cm]{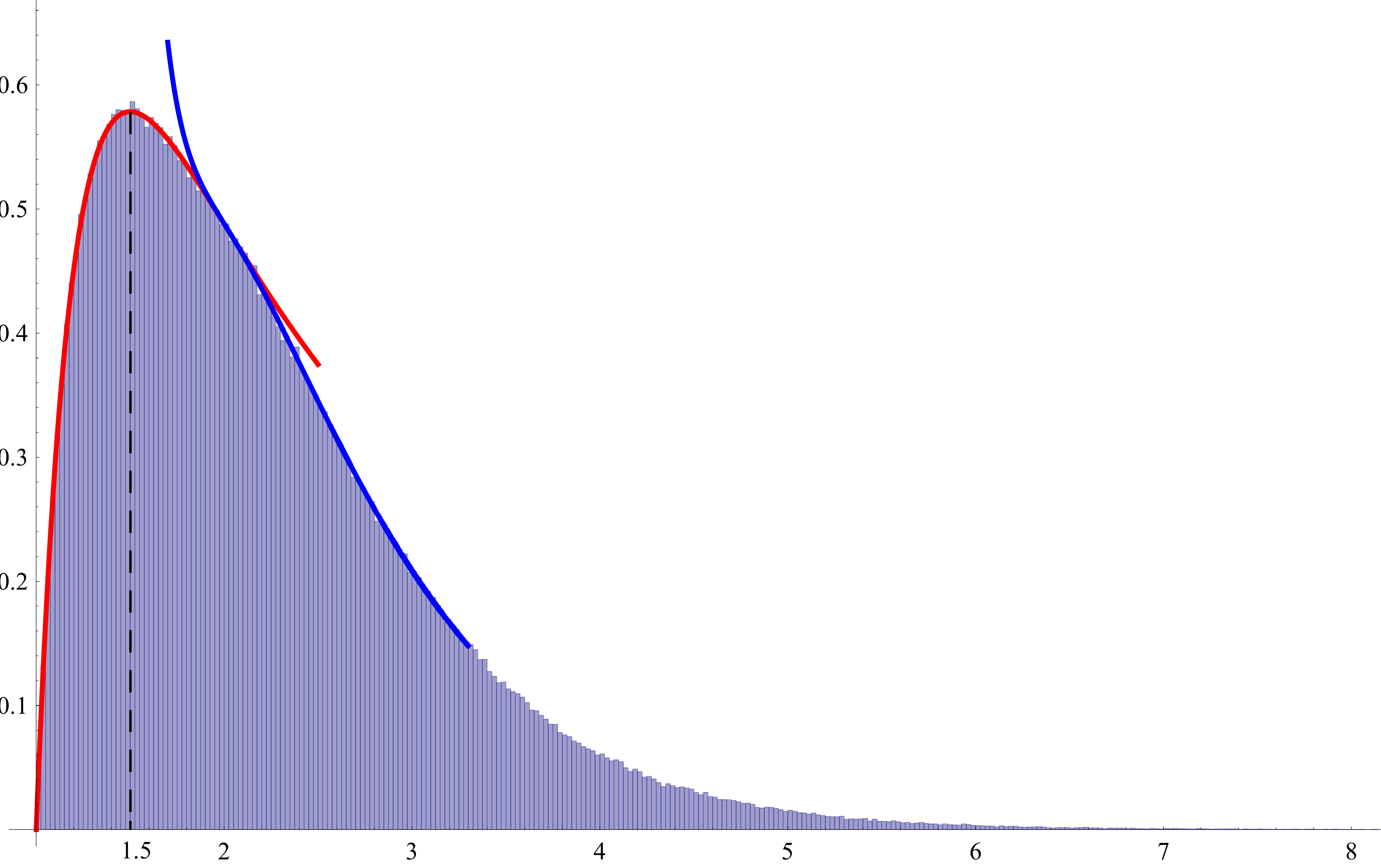}
\caption{A histogram of the distribution $\widehat \gamma$ over $(1,\infty)$ obtained from the simulations based on~(\ref{eq:c*-rde}). The red and the blue curves correspond respectively to the explicit formulae for the density of $\widehat \gamma$ over $[1,2]$ and $[2,3]$. \label{figure:conductancebias}}
\end{center}
\end{figure}

\begin{proposition}
\label{prop:lambda-expression}
The distribution $\widehat\gamma$ of the conductance $\widehat{\mathcal{C}}$ is characterized in the class of all probability measures on $[1,\infty)$ by the distributional equation~(\ref{eq:c*-rde}). The constant $\lambda$ appearing in Theorems~\ref{thm:main-dis} and~\ref{thm:main-continu} is given by
\begin{equation}
\label{eq:lambda-expression}
\lambda = \E\big[\,\widehat{\mathcal{C}}\,\big]-1 \in (1,\infty).
\end{equation}
\end{proposition}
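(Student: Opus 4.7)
\textbf{Plan for the proof of Proposition~\ref{prop:lambda-expression}.}

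The proposition splits into two independent assertions: characterization of $\widehat\gamma$ as the unique solution of the recursive distributional equation~\eqref{eq:c*-rde} on $[1,\infty)$, and the identity $\lambda=\E[\widehat{\mathcal{C}}]-1$. I would treat them in that order, noting that $\widehat\gamma$ itself satisfies \eqref{eq:c*-rde} by the series-parallel decomposition of the network $\widehat\Delta$ at the first branching on its distinguished descendant line.

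For \textbf{uniqueness}, I would use a contraction-mapping argument in the Wasserstein-$1$ metric. Given two probability measures on $[1,\infty)$ satisfying~\eqref{eq:c*-rde}, realize them as random variables $X_1,X_2$ coupled with a common independent pair $(V,\mathcal{C})$ and let $X_i'\colonequals (V+(1-V)/(X_i+\mathcal{C}))^{-1}$. A direct manipulation yields
\[
X_1'-X_2' \;=\; \frac{(1-V)\,(X_1-X_2)}{F_1\,F_2\,(X_1+\mathcal{C})(X_2+\mathcal{C})},
\qquad F_i\colonequals V+\frac{1-V}{X_i+\mathcal{C}},
\]
and the elementary identity $F_i(X_i+\mathcal{C})=V(X_i+\mathcal{C})+(1-V)\geq 1+V$ (using $X_i,\mathcal{C}\geq 1$) gives the pointwise inequality $|X_1'-X_2'|\leq (1-V)/(1+V)^2\cdot|X_1-X_2|$. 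Integrating against the density $2(1-v)$ of $V$ produces the strict contraction factor $2\int_0^1(1-v)^2/(1+v)^2\,\mathrm{d}v=2(3-4\log 2)<1$, which yields uniqueness as soon as every solution is integrable. That last integrability follows from one iteration of~\eqref{eq:c*-rde}: using $X\geq 1$ inside the RDE gives the domination $X\leq (1+\mathcal{C})/(1+V\mathcal{C})$, whose expectation is finite because $\mathcal{C}$ has moments of every order by~\cite{CLG13}. The finite moments of $\widehat{\mathcal{C}}$ of every order and the stated regularity and distinguished points of the density $\widehat f$ then follow by the same scheme, transferring the analogous properties of $\mathcal{C}$ established in~\cite[Section~2.3]{CLG13}.

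For the \textbf{formula} $\lambda=\E[\widehat{\mathcal{C}}]-1$, I would take Theorem~\ref{thm:main-continu} as the defining property of $\lambda$ and compute the local dimension on the left-hand side of~\eqref{eq:loc-dim-harm} along a boundary ray $\mathbf{v}$ sampled according to the uniform measure~$\omega$. A spine (many-to-one, Peyri\`ere-type) change of measure identifies the joint law of $\Delta$ equipped with an $\omega$-distributed ray with that of the size-biased tree $\widehat\Delta$ equipped with its distinguished ray $\mathbf{v}_*$; in Yule coordinates the branching points on that distinguished ray form a Poisson process of rate $2$, and the side and continuation subtrees at successive branchings carry independent copies of $\mathcal{C}$ and $\widehat{\mathcal{C}}$ respectively. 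The harmonic mass $M(y)\colonequals \mu(\mathcal{B}_{\bd}(\mathbf{v}_*,e^{-y}))$ is then a piecewise-constant decreasing function of $y$ whose jumps occur at these branching points, each multiplicative jump being an explicit electric-network function of the conductances attached at that branching. The strong law of large numbers for the resulting Poisson-driven product yields a deterministic limit $-\log M(y)/y\to 2\,\E[\Psi(\widehat{\mathcal{C}},\mathcal{C})]$ for an explicit function $\Psi$, which by Theorem~\ref{thm:main-continu} must equal $\lambda$; a final algebraic simplification using the RDE~\eqref{eq:c*-rde} and the density $2(1-v)$ of the grafting heights collapses this expression to $\E[\widehat{\mathcal{C}}]-1$. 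The inclusion $\lambda\in(1,\infty)$ then follows from the finiteness of $\E[\widehat{\mathcal{C}}]$ proved above, together with a short separate lower-bound argument comparing the local dimensions of $\mu$ and $\omega$. The main obstacle, in my view, is precisely this last algebraic step: it is not a priori obvious that the ergodic-theorem output, a priori an expectation of a logarithmic function of $(\widehat{\mathcal{C}},\mathcal{C})$, should collapse through the specific form of the distribution of $V$ to the clean linear quantity $\E[\widehat{\mathcal{C}}]-1$, and I expect the bulk of the technical work of the proposition to lie there.
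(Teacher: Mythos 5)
Your overall scheme is the right one and parallels the paper's: uniqueness of the fixed point of \eqref{eq:c*-rde} via a contraction argument, then the ergodic/spine computation identifying $\lambda$ with $2\,\E[\log((\widehat{\mathcal{C}}+\mathcal{C})/\widehat{\mathcal{C}})]$ (the paper does this by applying Birkhoff's ergodic theorem to the shift on Yule trees under the $S$-invariant measure $\mathcal{W}(T)\Theta(\mathrm{d}T)\bar\omega_T(\mathrm{d}\mathbf{v})$, which by Lemma~\ref{lem:yulebias} is exactly the size-biased law you describe). There are, however, two genuine gaps.

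First, the integrability step in your uniqueness argument is wrong in direction. The map $x\mapsto G(v,x,c)=\bigl(v+\frac{1-v}{x+c}\bigr)^{-1}$ is \emph{increasing} in $x$, so plugging the lower bound $X\ge 1$ into the RDE yields the \emph{lower} bound $X'\ge (1+\mathcal{C})/(1+V\mathcal{C})$, not the upper bound you wrote. The only uniform upper bound available from $X\ge 1$ is $X'\le 1/V$, and $\E[1/V]=\int_0^1 2(1-v)/v\,\mathrm{d}v=\infty$, so this does not give integrability of an arbitrary fixed point on $[1,\infty)$. Your contraction estimate $|X_1'-X_2'|\le \tfrac{1-V}{(1+V)^2}|X_1-X_2|$ is correct and the factor $2(3-4\log 2)<1$ is right, but without a priori finiteness of $W_1(\sigma_1,\sigma_2)$ the Wasserstein argument does not close. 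The paper sidesteps this by the stronger statement in Proposition~\ref{prop:c*-law}(1) that $\widehat\Phi^k(\sigma)\to\widehat\gamma$ for \emph{every} $\sigma$ (a monotone squeeze between $\widehat\Phi^k(\delta_1)$ and $\widehat\Phi^k(\delta_\infty)$, using that $\widehat\Phi$ is monotone for stochastic order, will do it), which both proves uniqueness and bypasses the need to show that a general fixed point is integrable.

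Second, and more centrally for the formula, you stop exactly at the point where the actual content lies. The ergodic computation produces $\lambda=2\,\E\bigl[\log\bigl((\widehat{\mathcal{C}}+\mathcal{C})/\widehat{\mathcal{C}}\bigr)\bigr]$, and converting this to $\E[\widehat{\mathcal{C}}]-1$ is not a routine change of variables but a specific consequence of the integral identity \eqref{eq:g-c-c*},
$\E\bigl[\widehat{\mathcal{C}}(\widehat{\mathcal{C}}-1)g'(\widehat{\mathcal{C}})\bigr]+2\E[g(\widehat{\mathcal{C}})]=2\E[g(\widehat{\mathcal{C}}+\mathcal{C})]$,
applied with $g=\log$ (which gives $\E[\widehat{\mathcal{C}}-1]+2\E[\log\widehat{\mathcal{C}}]=2\E[\log(\widehat{\mathcal{C}}+\mathcal{C})]$, i.e.\ \eqref{eq:lambda-log}). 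Deriving \eqref{eq:g-c-c*} from the RDE is the part of Proposition~\ref{prop:c*-law} that actually carries the weight of Proposition~\ref{prop:lambda-expression}; your proposal acknowledges this is ``the bulk of the technical work'' but does not carry it out, so the formula is not established. Your sketch for $\lambda>1$ (comparing local dimensions of $\mu$ and $\omega$) is consistent with the paper's Jensen/concavity argument, and $\lambda<\infty$ is easiest to get directly from $\E[\widehat{\mathcal{C}}]=\E[\mathcal{W}\mathcal{C}]\le\bigl(\E[\mathcal{W}^2]\E[\mathcal{C}^2]\bigr)^{1/2}<\infty$ rather than from the RDE.
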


Numerical simulations based on \eqref{eq:c*-rde} and \eqref{eq:lambda-expression} show that $\lambda \approx 1.21$.

It is worth pointing out that the main results of \cite{CLG13} have been generalized in \cite{LIN} to the case where the critical offspring distribution $\theta$ belongs to the domain of attraction of a stable distribution of index $\alpha\in (1,2]$. We expect that results analogous to those in the present work should hold in the general stable case. This is left to be explored in a future work. 

The remainder of this paper is structured as follows. We start by defining formally the continuous random trees $\Delta$ and $\Gamma$. The notation of the random variables involved will be slightly different from the one used in this Introduction. The distribution $\widehat\gamma$ of the conductance $\widehat{\mathcal{C}}$ is studied in Section~\ref{sec:continu-conductance}, and the proof of Theorem~\ref{thm:main-continu} and of formula~(\ref{eq:lambda-expression}) is given in Section~\ref{sec:proof-continu}. The size-biased continuous random trees $\widehat\Delta$ and $\widehat\Gamma$ are properly defined respectively in Section~\ref{sec:size-biased-yule} and in Section~\ref{sec:size-biased-reduced}. In Section~\ref{sec:discrete-setting-typical}, we gather some preliminaries for the discrete setting, where, for example, we prove the convergence of discrete conductances, and introduce a backward version of the discrete size-biased Galton-Watson tree. The last part, Section~\ref{sec:proof-thm1}, is devoted to the proof of Theorem~\ref{thm:main-dis}.

\smallskip
\noindent \textbf{Acknowledgments.} The author gratefully acknowledges many helpful suggestions of J.-F.~Le~Gall during the preparation of this work. He is also indebted to N.~Curien for several stimulating discussions.

\section{The continuous setting}

\subsection{The continuous reduced tree $\Delta$}
\label{sec:reduced-tree-delta}
We set
$$\mathcal{V} \colonequals  \bigcup_{n=0}^\infty \{1,2\}^n,$$
where $\{1,2\}^0=\{\varnothing\}$. If $v=(v_1,\ldots,v_n)\in\mathcal{V}$, we set $|v|=n$ (in particular $|\varnothing|=0$). If $v \neq \varnothing$, we define the parent of $v$ as $\bar v=(v_1,\ldots,v_{n-1})$, and we then say that $v$ is a child of~$\bar v$. If both $u=(u_1,\ldots,u_m)$ and $v=(v_1,\ldots,v_n)$ belong to $\mathcal{V}$, their concatenation is $uv\colonequals (u_1,\ldots,u_m,v_1,\ldots,v_n)$. The notions of a descendant and an ancestor of an element of $\v$ are defined in the obvious way, with the convention that a vertex $v\in \v$ is both an ancestor and a descendant of itself. If $v,w\in\v$, $v\wedge w$ is the unique element of $\v$ that is an ancestor of both $v$ and $w$ and such that $|v\wedge w|$ is maximal.

We consider a collection $(U_v)_{v\in\mathcal{V}}$ of independent real random variables uniformly distributed over $[0,1]$ under the probability measure $\P$. We set $Y_\varnothing=U_\varnothing$ and then by induction, for every $v\in \{1,2\}^n$ with $n\geq 1$, we let 
$$Y_v = Y_{\bar v} + U_v(1- Y_{\bar v}).$$
Note that a.s., $0\leq Y_v< 1$ for every $v\in\mathcal{V}$. Consider the set
$$\Delta_0 \colonequals (\{\varnothing\}\times [0,Y_\varnothing] )\cup  \Bigg(\bigcup_{v\in\mathcal{V}\backslash\{\varnothing\}} \{v\} \times (Y_{\bar v}, Y_v]\Bigg).$$
We can define a natural metric $\bd$ on $\Delta_0$, so that $(\Delta_0,\bd)$ is a (noncompact) $\R$-tree and, for every $x=(v,r)\in \Delta_0$, $\bd((\varnothing,0), x)=r$. To be specific, let $x=(v,r)\in\Delta_0$ and $y=(w,r')\in\Delta_0$:
\begin{enumerate}
\item[$\bullet$] If $v$ is a descendant of $w$ or $w$ is a descendant of $v$, we set $\bd(x,y)= |r-r'|$.
\item[$\bullet$] Otherwise, $\bd(x,y)= \bd((v\wedge w,Y_{v\wedge w}),x)+ \bd((v\wedge w,Y_{v\wedge w}),y)
=(r-Y_{v\wedge w})+(r'-Y_{v\wedge w})$.
\end{enumerate}
See Figure~\ref{figure:reduced} for an illustration of the precompact tree $\Delta_0$.

\begin{figure}[!h]
\begin{center}
\includegraphics[width=11cm]{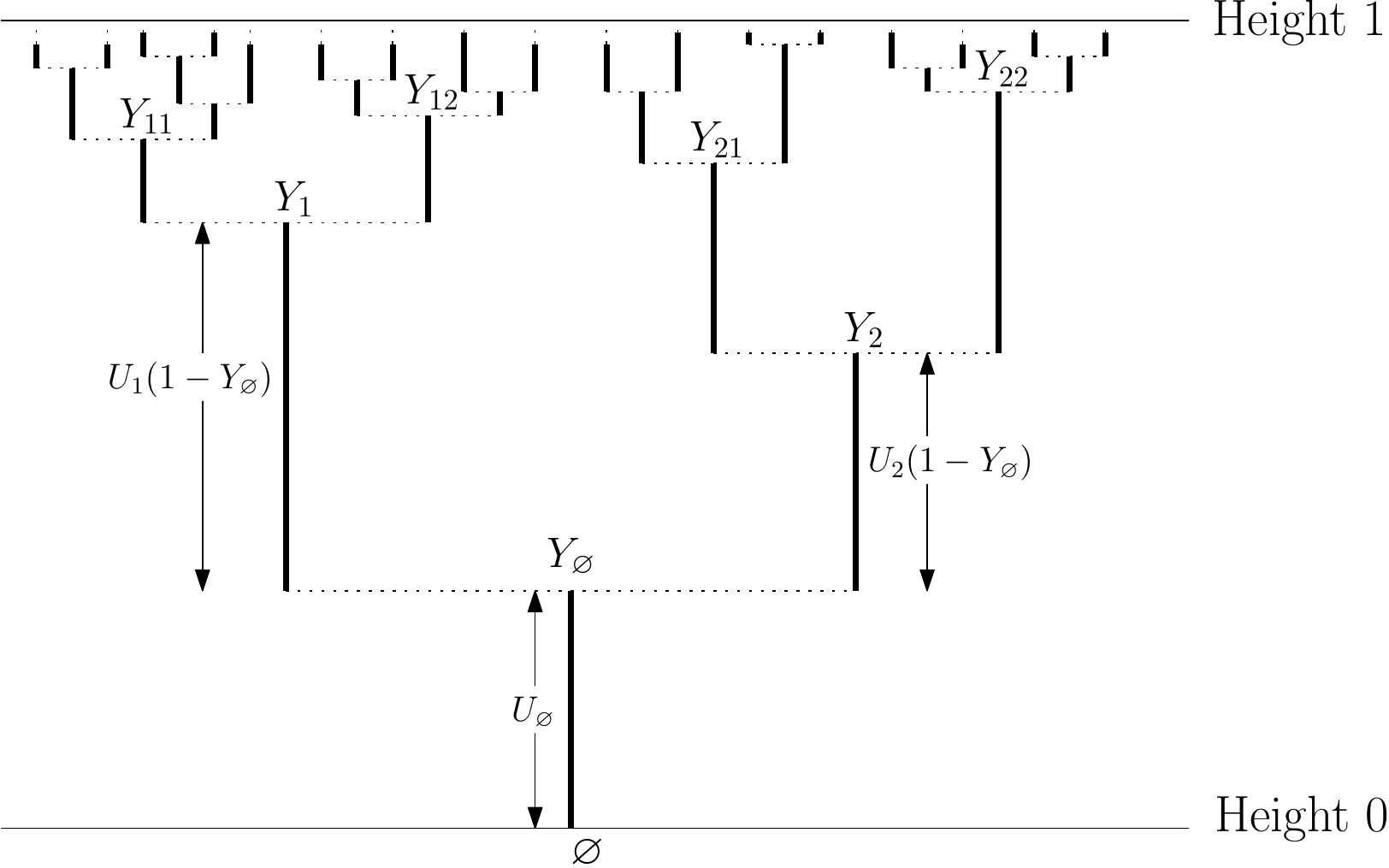}
\caption{The random tree $\Delta_0$ \label{figure:reduced}}
\end{center}
\end{figure}

We let $\Delta$ be the completion of $\Delta_0$ with respect to the metric $\bd$. Then $\Delta=\Delta_0 \cup \partial \Delta$, where the boundary $\partial \Delta \colonequals \{x\in\Delta \colon \bd((\varnothing,0), x)=1\}$ is canonically identified with $\{1,2\}^\N$. Note that $(\Delta,\bd)$ is now a compact $\R$-tree. The point $(\varnothing,0)$ is called the root of $\Delta$. For every $x\in\Delta$, we set $H(x)=\bd((\varnothing,0), x)$ and call it the height of $x$. We can define a genealogical order on $\Delta$ by setting $x\prec y$ if and only if $x$ belongs to the geodesic path from the root to $y$.

For every $\ve\in(0,1)$, we set $\Delta_\ve \colonequals \{x\in\Delta\colon H(x)\leq 1-\ve\}$, which is also a compact $\R$-tree for the metric $\bd$. The leaves of $\Delta_\ve$ are the points of the form $(v,1-\ve)$ for all $v\in\mathcal{V}$ such that $Y_{\bar v}< 1-\ve\leq Y_v$. The branching points of $\Delta_\ve$ are the points of the form $(v,Y_v)$ for all $v\in \mathcal{V}$ such that $Y_v<1-\ve$.

Conditionally on $\Delta$, we can take any $\varepsilon\in (0,1)$ and define Brownian motion on $\Delta_{\ve}$ starting from the root. Informally, this process behaves like linear Brownian motion as long as it stays on an ``open interval'' of the form $\{v\}\times (Y_{\bar v},Y_v\wedge (1-\ve))$, and it is reflected at the root $(\varnothing,0)$ and at the leaves of $\Delta_{\ve}$. When it arrives at a branching point of the tree, it chooses each of three possible line segments ending at this point with equal probabilities. By taking a sequence $(\ve_n=2^{-n})_{n\geq 1}$ and then letting $n$ go to infinity, we can construct under the same probability measure $P$ a Brownian motion $(B_t)_{t\geq 0}$ on $\Delta$ starting from the root up to its first hitting time $\tau$ of $\partial \Delta$. We refer the reader to~\cite[Section 2.1]{CLG13} for the details of this construction. The harmonic measure $\mu$ is the distribution of $B_{\tau-}$ under~$P$, which is a (random) probability measure on $\partial \Delta=\{1,2\}^{\N}$.

\subsection{The Yule tree $\Gamma$}
\label{sec:yule-tree}
To define the Yule tree, consider a collection $(I_v)_{v\in\mathcal{V}}$ of independent real random variables exponentially distributed with mean $1$ under the probability measure $\P$. We set $Z_\varnothing=I_\varnothing$
and then by induction, for every $v\in \{1,2\}^n$ with $n\geq 1$, $Z_v = Z_{\bar v} + I_v$. The Yule tree is the set
$$\Gamma \colonequals (\{\varnothing\}\times [0,Z_\varnothing]) \cup  \Bigg(\bigcup_{v\in\mathcal{V}\backslash\{\varnothing\}} \{v\} \times (Z_{\bar v}, Z_v]\Bigg),$$
which is equipped with the metric $d$ defined in the same way as $\bd$ in the preceding section. For every $x=(v,r)\in\Gamma$, $d((\varnothing,0),x)=r$ and we keep the notation $H(x)=r$ for the height of the point $x$.

Observe that if $U$ is uniformly distributed over $[0,1]$, the random variable $-\log(1-U)$ is exponentially distributed with mean $1$. Hence, we may and will suppose that the collection $(I_v)_{v\in\mathcal{V}}$ is constructed from $(U_v)_{v\in\mathcal{V}}$ in the previous section via the formula $I_v=-\log(1-U_v)$, for every $v\in\mathcal{V}$. Then the mapping $\Psi$ defined on $\Delta_0$ by $\Psi(v,r)=(v,-\log(1-r))$, for every $(v,r)\in\Delta_0$, is a homeomorphism from $\Delta_0$ onto $\Gamma$.

Using stochastic calculus, we can write, for every $t\in[0,\tau)$,
\begin{equation}
\label{BM-Yule-reduced}
\Psi(B_t) =W\Big(\int_0^t (1-H(B_s))^{-2}\,\mathrm{d}s\Big)
\end{equation}
where $(W(t))_{t\geq 0}$ is Brownian motion with constant drift $1/2$ towards infinity on the Yule tree (this process is defined in a similar way as Brownian motion on $\Delta$, except that it behaves like Brownian motion with drift $1/2$ on every ``open interval'' of the tree $\Gamma$). Note that $W$ is also defined under the probability measure $P$. From now on, Brownian motion on the Yule tree $\Gamma$ or on other similar trees will always refer to Brownian motion with drift $1/2$ towards infinity.

By definition, the boundary $\partial \Gamma$ is the set of all geodesic rays in $\Gamma$ starting from the root $(\varnothing,0)$. From the transience of Brownian motion on $\Gamma$, there is a.s.~a unique geodesic ray denoted by $W_\infty$ that is visited by $(W(t),t\geq 0)$ at arbitrarily large times. The distribution of the exit ray $W_\infty$ under $P$ yields a probability measure $\nu$ on the boundary $\partial \Gamma$. Thanks to \eqref{BM-Yule-reduced}, we have in fact $\nu=\mu$, provided we identify $\partial \Delta$ and $\partial \Gamma$ with $\{1,2\}^\mathbb{N}$ and view both $\mu$ and $\nu$ as (random) probability measures on $\{1,2\}^\mathbb{N}$.

\medskip
\noindent{\bf Yule-type trees.}
We define $\mathscr{T}$ to be the set of all collections $(z_v)_{v\in\v}$
of positive real numbers such that the following properties hold:
\begin{enumerate}
\item[\rm(i)] $z_{\bar v}< z_v$ for every $v\in\v\backslash\{\varnothing\}$;
\item[\rm(ii)] for every $\mathbf{v}=(v_1,v_2,\ldots)\in \{1,2\}^\N$, $z_{(v_1,\ldots,v_n)} \rightarrow +\infty$ as $n\to \infty$.
\end{enumerate}
Notice that we allow the possibility that $z_\varnothing=0$.
If $(z_v)_{v\in\v}\in\mathscr{T}$, we consider the associated ``tree''
$$T\colonequals (\{\varnothing\}\times [0,z_\varnothing]) \cup  \Bigg(\bigcup_{v\in\mathcal{V}\backslash\{\varnothing\}} \{v\} \times (z_{\bar v}, z_v]\Bigg),$$
which is equipped with the distance $d$ similarly defined as above. If $x=(v,r)\in T$, we still write $H(x)=r$ for the height of $x$. The genealogical (partial) order on $T$ is defined as previously and will again be denoted by $\prec$. The set of all geodesic rays in $T$ is called the boundary $\partial T$, which is naturally identified with $\{1,2\}^\N$. If $\mathbf{u}=(u_1,u_2,\ldots)\in\{1,2\}^\N$, and $x=(v,r)\in T$, we write $x\prec \mathbf{u}$ if $v=(u_1,u_2,\ldots,u_k)$ for some integer $k\geq 0$.

We will often say that we consider a tree $T\in \mathscr{T}$: this means that we are given a collection $(z_v)_{v\in\v}$ satisfying the above properties, and we consider the associated tree $T$. In particular, the tree $T$ has an order structure given by the lexicographical order on $\v$. Elements of $\mathscr{T}$ will be called Yule-type trees. The Yule tree $\Gamma$ can be viewed as a random variable taking values in $\mathscr{T}$, and we write $\Theta(\mathrm{d}T)$ for its distribution.

Let us fix $T\in\mathscr{T}$. If $r>0$, the level set at height $r$ is $T_r \colonequals \{x\in T\colon H(x)=r\}$. If $x\in T_r$, we can then consider the subtree $T[x]$ of descendants of $x$ in $T$. Formally, we view $T[x]$ as an element of $\mathscr{T}$: We write $v_x$ for the unique element of $\v$ such that $x=(v_x,r)$, and define $T[x]$ as the Yule-type tree corresponding to the collection $(z_{v_xv}-r)_{v\in\v}$.

As we have seen in the Introduction, the limit $\mathcal{W}(T)=\lim_{r\to \infty} e^{-r}\# T_r$ exists $\Theta(\mathrm{d}T)$-a.s., and $\int \mathcal{W}(T) \Theta(\mathrm{d}T)=1$. For every $x\in T$, we similarly set $\mathcal{W}(T[x])=\lim_{r\to \infty} e^{-r}\# T_r[x]$. If $\mathbf{v}\in \partial T$ is a geodesic ray passing through $x$, let $\mathcal{B}(\mathbf{v},H(x))$ denote the set of geodesic rays in $T$ that coincide with $\mathbf{v}$ up to height $H(x)$. Then $\Theta(\mathrm{d}T)$-a.s., the uniform measure $\bar \omega_T$ on $\partial T$ is defined as the unique probability measure on $\partial T$ satisfying that
\begin{displaymath}
\bar \omega_T(\mathcal{B}(\mathbf{v},H(x)))= \frac{e^{-H(x)}\mathcal{W}(T[x])}{\mathcal{W}(T)}, \quad\hbox{for every } x\in T \hbox{ and } \mathbf{v}\in\partial T \hbox{ such that }x\prec \mathbf{v}.
\end{displaymath}

On the other hand, for a fixed Yule-type tree $T\in\mathscr{T}$, we define the harmonic measure $\mu_T$ on $\partial T$ as the distribution of the exit ray chosen by Brownian motion on $T$ (with drift $1/2$ towards infinity).

\subsection{The invariant measure}
We write
\begin{displaymath}
\mathscr{T}^{*}\colonequals \mathscr{T}\times \{1,2\}^{\N}
\end{displaymath}
for the set of all pairs consisting of a tree $T\in \mathscr{T}$ and a distinguished geodesic ray $\mathbf{v}$. Let us define a shift transformation $S$ on $\mathscr{T}^{*}$ by shifting $(T,\mathbf{v})$ at the first branching point of $T$. More precisely, if $T$ corresponds to the collection $(z_v)_{v\in \v}$, we write $T_{(1)}$ and $T_{(2)}$ for the two subtrees of $T$ obtained at the first branching point, which means, for $i\in\{1,2\}$, $T_{(i)}$ is the tree corresponding to the collection $(z_{iv}-z_\varnothing)_{v\in \v}$. For any geodesic ray $\mathbf{v}=(v_1,v_2,\ldots)$ in the tree $T$, we set $S(T,\mathbf{v})\colonequals (T_{(v_1)}, \wt{\mathbf{v}})$, where $\wt{\mathbf{v}}=(v_2,v_3,\ldots)$.

The following proposition is the analogue of Proposition 6.1 in~\cite{LPP95} for the Yule tree.
\begin{proposition}\label{prop:unif-meas-inv}
The probability measure $\mathcal{W}(T)\Theta(\mathrm{d}T)\bar \omega_T(\mathrm{d}\mathbf{v})$ is invariant under $S$.
\end{proposition}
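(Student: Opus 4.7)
The plan is to verify the invariance by unfolding both sides against a bounded measurable test function $F:\mathscr{T}^{*}\to\R$, using the recursive structure of the Yule tree at the first branching point. Under $\Theta$, the root edge length is $z_\varnothing=I_\varnothing\sim\mathrm{Exp}(1)$ and, conditionally on $I_\varnothing$, the two subtrees $T_{(1)}$ and $T_{(2)}$ are i.i.d.\ with distribution $\Theta$. This is the only probabilistic input we need from the construction of $\Gamma$ in Section~\ref{sec:yule-tree}.

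Two algebraic identities drive the proof. First, splitting level sets at the first branching point gives
\begin{equation*}
\mathcal{W}(T)= e^{-z_\varnothing}\bigl(\mathcal{W}(T_{(1)})+ \mathcal{W}(T_{(2)})\bigr).
\end{equation*}
Second, by the defining property of $\bar\omega_T$ applied to the branching point at height $z_\varnothing$, the $\bar\omega_T$-mass of the cylinder of rays starting with $i\in\{1,2\}$ equals $e^{-z_\varnothing}\mathcal{W}(T_{(i)})/\mathcal{W}(T)$; moreover, conditionally on $v_1=i$, the shifted ray $\widetilde{\mathbf v}$ has law $\bar\omega_{T_{(i)}}$ in $T_{(i)}$. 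This last point is a direct consequence of the uniqueness characterization of the uniform measure, applied to cylinders in $T_{(i)}$ pulled back to $T$.

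Combining these two observations, the $\mathcal{W}(T)$ factor cancels the denominator in the conditional decomposition of $\bar\omega_T$, and one obtains
\begin{equation*}
\int F\circ S\,\mathrm{d}\nu
= \int \Theta(\mathrm{d}T)\sum_{i=1}^{2} e^{-z_\varnothing}\mathcal{W}(T_{(i)})\int F\bigl(T_{(i)},\mathbf{v}'\bigr)\,\bar\omega_{T_{(i)}}(\mathrm{d}\mathbf{v}'),
\end{equation*}
where $\nu=\mathcal{W}(T)\Theta(\mathrm{d}T)\bar\omega_T(\mathrm{d}\mathbf{v})$. Using the independence of $I_\varnothing$, $T_{(1)}$ and $T_{(2)}$, together with the identity $\E[e^{-I_\varnothing}]=1/2$, the right-hand side simplifies to $2\cdot\tfrac12\int F\,\mathrm{d}\nu=\int F\,\mathrm{d}\nu$.

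The main technical point is to justify the conditional decomposition of $\bar\omega_T$ at the first branching point; once this is granted, the rest is a short computation. Measurability of $(T,\mathbf v)\mapsto\bar\omega_T(\cdot)$ and of $\mathcal{W}(T)$ is standard from the martingale construction recalled in Section~\ref{sec:yule-tree}, so there is no further subtlety.
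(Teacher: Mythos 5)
Your proposal is correct and follows essentially the same route as the paper's proof: unfold against a bounded test function $F$, decompose at the first branching point using the branching property of the Yule tree (so that $z_\varnothing\sim\mathrm{Exp}(1)$ and $T_{(1)},T_{(2)}$ are conditionally i.i.d.\ with law $\Theta$), use $\mathcal{W}(T)=e^{-z_\varnothing}(\mathcal{W}(T_{(1)})+\mathcal{W}(T_{(2)}))$ together with the conditional decomposition of $\bar\omega_T$ at the branching point to cancel the $\mathcal{W}(T)$ factor, and conclude via $\E[e^{-z_\varnothing}]=1/2$ and the symmetry of the two subtrees. The only (minor) difference is that you make explicit the two algebraic identities and the disintegration of $\bar\omega_T$ at the branching point, which the paper uses implicitly in passing to its first displayed line; this is a matter of exposition, not of method.
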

\begin{proof}
Under $\Theta(\mathrm{d}T)$, if $T$ corresponds to the collection $(z_v)_{v\in \v}$, then $z_{\varnothing}$ is exponentially distributed with mean 1. Conditionally on $z_{\varnothing}$, the branching property of the Yule tree states that $T_{(1)}$ and $T_{(2)}$ are i.i.d.~of the same law $\Theta$.

Let $F$ be a bounded measurable function on $\mathscr{T}^{*}$. By the definition of the shift $S$ and the preceding observation, we have
\begin{eqnarray*}
\int F\circ S (T,\mathbf{v})\mathcal{W}(T)\Theta(\mathrm{d}T) \bar\omega_T(\mathrm{d}\mathbf{v}) & =& \sum\limits_{i=1}^2 \int F(T_{(i)},\mathbf{u}) \big(e^{-z_{\varnothing}}\mathcal{W}(T_{(i)})\big) \Theta(\mathrm{d}T)\bar \omega_{T_{(i)}}(\mathrm{d}\mathbf{u})\\
&=& 2 \int_0^{\infty} e^{-2z} \,\mathrm{d}z \times \int F(T,\mathbf{u})\mathcal{W}(T) \Theta(\mathrm{d}T)\bar \omega_T(\mathrm{d}\mathbf{u}) \\
&=& \int F(T,\mathbf{u})\mathcal{W}(T) \Theta(\mathrm{d}T)\bar \omega_T(\mathrm{d}\mathbf{u}),
\end{eqnarray*}
which completes the proof.
\end{proof}

\subsection{The continuous conductances}
\label{sec:continu-conductance}

For a fixed Yule-type tree $T$, we consider the excursion measure of Brownian motion (with drift $1/2$) on $T$ away from the root, and define $\mathcal{C}(T)$ as the mass assigned by this excursion measure to the set of trajectories that never return to the root. Note that $1\leq \mathcal{C}(T)<\infty$ for any $T\in \mathscr{T}$. For more details on this probabilistic definition of the conductance $\mathcal{C}(T)$, we refer the reader to \cite[Section 2.3]{CLG13}.

To simplify notation, we introduce under the probability measure $\P$ a pair of random variables $(\mathcal{W},\mathcal{C})$ that is distributed as $(\mathcal{W}(T),\mathcal{C}(T))$ under $\Theta(\mathrm{d}T)$. In addition, we let $\widehat{\mathcal{C}}$ be a random variable defined under $\P$ that is distributed as $\mathcal{C}(T)$ under the probability measure $\mathcal{W}(T)\Theta(\mathrm{d}T)$.

Let $T$ be a Yule-type tree corresponding to the collection $(z_v)_{v\in \v}$. Recall that $T_{(1)}$ and $T_{(2)}$ stand for the two subtrees of $T$ obtained at the first branching point. From the identity $\mathcal{W}(T)= e^{-z_{\varnothing}} \big( \mathcal{W}(T_{(1)})+ \mathcal{W}(T_{(2)})\big)$ for every $T\in \mathscr{T}$, it follows that the distribution of $\mathcal{W}$ satisfies the distributional equation
$$\mathcal{W} \,\overset{(\mathrm{d})}{=\joinrel=}\, (1-U)(\mathcal{W}_{1}+\mathcal{W}_{2}),$$
in which $\mathcal{W}_1, \mathcal{W}_2$ are two independent copies of $\mathcal{W}$, and $U$ is uniformly distributed over $[0,1]$ and independent of $(\mathcal{W}_1,\mathcal{W}_2)$. Moreover, the preceding equation holds jointly with a similar distributional identity for the conductance $\mathcal{C}$ (see equation (2) in~\cite{CLG13}). To sum up, we have
\begin{equation}
\label{eq:rde-joint}
(\mathcal{W}, \mathcal{C})\,\overset{(\mathrm{d})}{=\joinrel=}\, \bigg ( (1-U)(\mathcal{W}_{1}+\mathcal{W}_{2}), \Big(U + \frac{1-U}{\mathcal{C}_1+ \mathcal{C}_2}\Big)^{-1}\bigg),
\end{equation}
where $U$ is as above, while $(\mathcal{W}_i,\mathcal{C}_i)_{i\in\{1,2\}}$ are two independent copies of $(\mathcal{W},\mathcal{C})$, and are independent of $U$.

\begin{lemma}
The random variable $\widehat{\mathcal{C}}$ satisfies the distributional identity~(\ref{eq:c*-rde}).
\end{lemma}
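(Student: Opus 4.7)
The plan is to reduce the identity to the joint distributional equation \eqref{eq:rde-joint} for $(\mathcal{W},\mathcal{C})$ and then to perform the size-biasing by $\mathcal{W}$ explicitly. By the very definition of $\widehat{\mathcal{C}}$, for every bounded measurable $F$,
\[
\E\big[F(\widehat{\mathcal{C}})\big] \;=\; \E\big[\mathcal{W}\,F(\mathcal{C})\big].
\]
Substituting the joint identity \eqref{eq:rde-joint} gives
\[
\E\big[F(\widehat{\mathcal{C}})\big] \;=\; \E\Bigg[(1-U)(\mathcal{W}_1+\mathcal{W}_2)\,F\!\left(\Big(U+\tfrac{1-U}{\mathcal{C}_1+\mathcal{C}_2}\Big)^{-1}\right)\Bigg],
\]
where $U$ is uniform on $[0,1]$, $(\mathcal{W}_i,\mathcal{C}_i)_{i=1,2}$ are two independent copies of $(\mathcal{W},\mathcal{C})$, and these three ingredients are independent.

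The second step is to symmetrize. Since the expression on the right is invariant under swapping $(\mathcal{W}_1,\mathcal{C}_1)\leftrightarrow(\mathcal{W}_2,\mathcal{C}_2)$, one obtains a factor $2$ and may keep only the weight $\mathcal{W}_1$:
\[
\E\big[F(\widehat{\mathcal{C}})\big] \;=\; 2\,\E\Bigg[(1-U)\,\mathcal{W}_1\,F\!\left(\Big(U+\tfrac{1-U}{\mathcal{C}_1+\mathcal{C}_2}\Big)^{-1}\right)\Bigg].
\]
Now I condition on $U$ and on $(\mathcal{W}_2,\mathcal{C}_2)$ and integrate first with respect to $(\mathcal{W}_1,\mathcal{C}_1)$. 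By the definition of $\widehat{\mathcal{C}}$ applied pointwise to the bounded function $g(\cdot)=F\big((U+(1-U)/(\cdot+\mathcal{C}_2))^{-1}\big)$, I get $\E[\mathcal{W}_1 g(\mathcal{C}_1)]=\E[g(\widehat{\mathcal{C}})]$, where on the right $\widehat{\mathcal{C}}$ denotes an independent copy. Reinserting this gives
\[
\E\big[F(\widehat{\mathcal{C}})\big] \;=\; \E\Bigg[2(1-U)\,F\!\left(\Big(U+\tfrac{1-U}{\widehat{\mathcal{C}}+\mathcal{C}_2}\Big)^{-1}\right)\Bigg],
\]
with $U$, $\widehat{\mathcal{C}}$ and $\mathcal{C}_2$ independent.

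Finally I recognize that the weight $2(1-u)\mathbf{1}_{[0,1]}(u)$ is precisely the density of the size-biased variable $V$ appearing in the statement. Replacing the uniform $U$ weighted by $2(1-U)$ by a variable $V$ with this density, and writing $\mathcal{C}$ for $\mathcal{C}_2$, yields
\[
\E\big[F(\widehat{\mathcal{C}})\big] \;=\; \E\Bigg[F\!\left(\Big(V+\tfrac{1-V}{\widehat{\mathcal{C}}+\mathcal{C}}\Big)^{-1}\right)\Bigg],
\]
which is \eqref{eq:c*-rde} since $F$ is arbitrary. The only real point to watch is the conditional independence bookkeeping in the size-biasing step, but once one conditions on $(U,\mathcal{W}_2,\mathcal{C}_2)$ the identity $\E[\mathcal{W}_1 g(\mathcal{C}_1)]=\E[g(\widehat{\mathcal{C}})]$ is just the defining property of $\widehat{\mathcal{C}}$, so no genuine obstacle arises.
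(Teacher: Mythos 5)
Your proof is correct and takes essentially the same route as the paper's: start from $\E[g(\widehat{\mathcal{C}})]=\E[\mathcal{W}\,g(\mathcal{C})]$, substitute the joint identity \eqref{eq:rde-joint}, symmetrize to get a factor $2$ with weight $\mathcal{W}_1$, and then unwind the size-biasing. The paper compresses your conditioning-and-reinsertion step into the phrase ``applying (\ref{eq:c*-dis}) in reverse order,'' which is precisely the step you spell out in detail; no genuine difference in method.
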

\begin{proof}
By definition, the law of $\widehat{\mathcal{C}}$ is determined by
\begin{equation}
\label{eq:c*-dis}
\E\big[g(\widehat{\mathcal{C}})\big]= \E\big[\mathcal{W}\,g(\mathcal{C})\big]
\end{equation}
for every nonnegative measurable function $g$. Using (\ref{eq:rde-joint}) and symmetry, we have
\begin{eqnarray*}
\E\big[g(\widehat{\mathcal{C}})\big] &=& \E\bigg[(1-U)(\mathcal{W}_{1}+\mathcal{W}_{2})\,g\Big(\Big(U + \frac{1-U}{\mathcal{C}_1+ \mathcal{C}_2}\Big)^{-1}\Big)\bigg]\\
&=& \E \bigg[2(1-U)\mathcal{W}_{1} \,g\Big(\Big(U + \frac{1-U}{\mathcal{C}_1+ \mathcal{C}_2}\Big)^{-1}\Big)\bigg].
\end{eqnarray*}
Recall that the random variable $V$ in~(\ref{eq:c*-rde}) has density function $2(1-x)$ over $[0,1]$. The statement of the lemma therefore follows by applying (\ref{eq:c*-dis}) in reverse order.
\end{proof}

The law $\gamma$ of the conductance $\mathcal{C}$ has been discussed in great detail in \cite[Proposition 6]{CLG13}. We can study the law $\widehat \gamma$ of $\widehat{\mathcal{C}}$ by similar arguments. These properties are collected in the next proposition.

For every $v \in (0,1), x\geq 0$ and $c \geq 1$, we define
\begin{displaymath}
G(v,x,c) \colonequals \left(v + \frac{1-v}{x+c}\right)^{-1}.
\end{displaymath}
Let $\mathscr{M}$ be the set of all probability measures on $[0, \infty]$ and let $\widehat \Phi \colon \mathscr{M} \to \mathscr{M}$ map a probability measure $\sigma$ to
\begin{equation}
\label{eq:iteration-map}
\widehat\Phi(\sigma) \,=\, \mathsf{Law} \big(G(V,X,\mathcal{C})\big)
\end{equation}
where $V$ and $\mathcal{C}$ are as in~(\ref{eq:c*-rde}), while $X$ is distributed according to $\sigma$, and is independent of the pair $(V,\mathcal{C})$.

\begin{proposition}
\label{prop:c*-law}
\begin{enumerate}
\item[(1)] The distributional equation~(\ref{eq:c*-rde}) characterizes the law $\widehat \gamma$ of $\widehat{\mathcal{C}}$ in the sense that, $\widehat \gamma$ is the unique fixed point of the mapping $\widehat\Phi$ on $\mathscr{M}$, and for every $\sigma \in \mathscr{M}$, the $k$-th iterate $\widehat \Phi^k(\sigma)$ converges to $\widehat \gamma$ weakly as $k \to \infty$.
\item[(2)] The law $\widehat \gamma$ has a continuous density over $[1,\infty)$, and all its moments are finite.
\item[(3)] For any monotone continuously differentiable function $g\colon [1,\infty)\to \R_{+}$, we have
\begin{equation}
\label{eq:g-c-c*}
\E\big[\widehat{\mathcal{C}}(\widehat{\mathcal{C}}-1)g'(\widehat{\mathcal{C}})\big]+2\,\E\big[g(\widehat{\mathcal{C}})\big]=2\,\E\big[g(\widehat{\mathcal{C}}+\mathcal{C})\big],
\end{equation}
where $\widehat{\mathcal{C}}$ and $\mathcal{C}$ are always assumed to be independent under the probability measure $\P$.
\item[(4)] We define, for all $\ell\geq 0$, the Laplace transforms $\varphi(\ell)=\E[\exp(-\ell\, \mathcal{C}/2)]$ and
\begin{displaymath}
\widehat \varphi(\ell)=\E[\exp(-\ell\, \widehat{\mathcal{C}}/2)]=\int_1^{\infty} e^{-\ell r/2}\,\widehat \gamma(\mathrm{d}r).
\end{displaymath}
Then $\widehat \varphi$ solves the linear differential equation
\begin{displaymath}
2\ell\, \phi''(\ell)+\ell\,\phi'(\ell)-2(1-\varphi(\ell))\phi(\ell)=0.
\end{displaymath}
\end{enumerate}
\end{proposition}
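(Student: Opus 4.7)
The engine of the proposition is the Mellin-type identity~\eqref{eq:g-c-c*} of part~(3); once it is established, part~(4) follows from a one-line substitution of an exponential test function, and the moment control and the density in~(2) come from it together with an explicit disintegration of $\widehat\gamma$. The uniqueness in~(1) is a standard Wasserstein contraction. I therefore plan to address the parts in the order (3), (4), (2), (1).

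\textbf{Proof of~(3).} Starting from the RDE~(\ref{eq:c*-rde}), I rewrite $\widehat{\mathcal C} \stackrel{(\mathrm d)}{=} Y/(1+V(Y-1))$ with $Y := \widehat{\mathcal C}'+\mathcal C$ where $\widehat{\mathcal C}'$ is an independent copy of $\widehat{\mathcal C}$, and $V$ is independent of $Y$ with density $2(1-v)\mathbf{1}_{[0,1]}(v)$. A direct change of variables $w = y/(1+v(y-1))$ shows that, conditionally on $Y=y$, the random variable $\widehat{\mathcal C}$ has density
\begin{equation*}
p(w\mid y) \,=\, \frac{2\,y^{2}\,(w-1)}{w^{3}\,(y-1)^{2}}\,\mathbf{1}_{[1,y]}(w).
\end{equation*}
Multiplying $g'$ by the weight $w(w-1)$ and integrating against $p(\cdot\mid Y)$,
\begin{equation*}
\E\big[\widehat{\mathcal C}(\widehat{\mathcal C}-1)\,g'(\widehat{\mathcal C})\big] \,=\, 2\,\E\Bigg[\frac{Y^{2}}{(Y-1)^{2}}\int_{1}^{Y}\frac{(w-1)^{2}}{w^{2}}\,g'(w)\,dw\Bigg].
\end{equation*}
Since $\tfrac{d}{dw}\bigl[(w-1)^{2}/w^{2}\bigr] = 2(w-1)/w^{3}$, integration by parts on $[1,Y]$ yields
\begin{equation*}
\int_{1}^{Y}\frac{(w-1)^{2}}{w^{2}}g'(w)\,dw \,=\, \frac{(Y-1)^{2}}{Y^{2}}\,g(Y) - \int_{1}^{Y}\frac{2(w-1)}{w^{3}}\,g(w)\,dw,
\end{equation*}
with vanishing boundary contribution at $w=1$. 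Substituting back, the first term produces $2\,\E[g(Y)] = 2\,\E[g(\widehat{\mathcal C}+\mathcal C)]$, while the second term, after multiplication by $2Y^{2}/(Y-1)^{2}$, becomes $-2\int_{1}^{Y}g(w)\,p(w\mid Y)\,dw$ and therefore contributes $-2\,\E[g(\widehat{\mathcal C})]$. Rearranging gives~\eqref{eq:g-c-c*}. The hypothesis that $g$ is monotone ensures that Fubini applies and all integrals are well defined.

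\textbf{Proofs of~(4) and~(2).} For~(4), I substitute $g(x)=e^{-\ell x/2}$ in~\eqref{eq:g-c-c*}; using $\E[\widehat{\mathcal C}\,e^{-\ell\widehat{\mathcal C}/2}]=-2\widehat\varphi'(\ell)$, $\E[\widehat{\mathcal C}^{2}\,e^{-\ell\widehat{\mathcal C}/2}]=4\widehat\varphi''(\ell)$, and $\E[g(\widehat{\mathcal C}+\mathcal C)]=\widehat\varphi(\ell)\varphi(\ell)$ by independence, the identity~\eqref{eq:g-c-c*} rearranges into the announced linear ODE. For~(2), finiteness of all moments is immediate from the size-biasing identity $\E[\widehat{\mathcal C}^{k}] = \E[\mathcal W\,\mathcal C^{k}]$, together with $\mathcal W$ exponentially distributed and all moments of $\mathcal C$ finite by~\cite[Prop.~6]{CLG13}. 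For the density, integrating the disintegration $p(\cdot\mid y)$ against the density $f_{Y}$ of $Y = \widehat{\mathcal C}+\mathcal C$ (itself continuous because $\mathcal C$ has a continuous density by~\cite[Prop.~6]{CLG13}) yields
\begin{equation*}
\widehat f(w) \,=\, \int_{w}^{\infty}\frac{2\,y^{2}\,(w-1)}{w^{3}\,(y-1)^{2}}\,f_{Y}(y)\,dy,
\end{equation*}
and continuity of $\widehat f$ on $[1,\infty)$ follows by dominated convergence.

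\textbf{Proof of~(1) and main obstacle.} The map $x\mapsto G(v,x,c)$ is nondecreasing and $(1-v)$-Lipschitz in $x$, since $\partial_{x}G = (1-v)\,G^{2}/(x+c)^{2}$ and $G\leq x+c$. Hence for any coupling of two random variables $X,X'$ with finite first moment and independent $(V,\mathcal C)$,
\begin{equation*}
\E\,\big|G(V,X,\mathcal C)-G(V,X',\mathcal C)\big| \,\leq\, \E[1-V]\cdot\E|X-X'| \,=\, \tfrac{2}{3}\,\E|X-X'|,
\end{equation*}
so $\widehat\Phi$ is a strict contraction in the $L^{1}$-Wasserstein distance on the (Polish, complete) subspace of probability measures with finite first moment, and $\widehat\gamma$ is its unique fixed point in that subspace. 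The delicate point I expect to be the main obstacle is showing that the orbit of an arbitrary $\sigma\in\mathscr M$ enters this subspace after finitely many iterations. Since $G\leq 1/V$ pointwise and $\E[V^{-p}]<\infty$ for every $p<1$, one iteration of $\widehat\Phi$ already provides finite $p$-th moments for all $p<1$; the interpolation $G \leq (X+\mathcal C)^{\theta}V^{-(1-\theta)}$ then increases the integrability by a definite amount per iteration, so after finitely many steps the orbit lands in the contractive subspace, whereupon the contraction delivers the weak convergence $\widehat\Phi^{k}(\sigma)\to\widehat\gamma$ claimed in~(1).
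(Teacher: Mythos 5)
Your proof is correct and supplies exactly the details that the paper delegates to the analogous argument for $\mathcal{C}$ in \cite[Proposition~6]{CLG13}: the disintegration of~(\ref{eq:c*-rde}) conditionally on $Y=\widehat{\mathcal{C}}'+\mathcal{C}$ gives the conditional density $p(w\mid y)=2y^{2}(w-1)/(w^{3}(y-1)^{2})$ on $[1,y]$, the integration by parts produces~(\ref{eq:g-c-c*}), and parts~(4), (2), (1) are deduced in the expected way. Two small bookkeeping points are worth recording. In part~(2), the identity $\E[\widehat{\mathcal{C}}^{k}]=\E[\mathcal{W}\,\mathcal{C}^{k}]$ is right, but $\mathcal{W}$ and $\mathcal{C}$ are functionals of the same Yule tree and are not independent, so you should invoke Cauchy--Schwarz (or H\"older) together with $\E[\mathcal{W}^{2}]<\infty$ and the finiteness of all moments of $\mathcal{C}$ to conclude. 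In part~(1), the interpolation should be $G(v,x,c)\leq v^{-(1-\theta)}(1-v)^{-\theta}(x+c)^{\theta}$, obtained from $a+b\geq a^{1-\theta}b^{\theta}$ with $a=v$ and $b=(1-v)/(x+c)$; the extra factor $(1-v)^{-\theta}$ is harmless since it is integrable against the density $2(1-v)$ for $\theta<2$, and with this correction two iterations already suffice (one application of $G\leq 1/V$ to gain all moments of order $<1$, then $\theta=1/2$ to gain a first moment) before the Wasserstein-$1$ contraction with factor $\E[1-V]=2/3$ takes over.
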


The proof is very similar to that of the analogous results in~\cite[Proposition~6]{CLG13}. We therefore skip the details.

{\bf Remark 1.} Using assertion (1) in Proposition~\ref{prop:c*-law}, one can approximate the law $\widehat \gamma$ of $\widehat{\mathcal{C}}$ by iterating the mapping $\widehat \Phi$. An application of the Monte-Carlo method gives $\E[\widehat{\mathcal{C}}]\approx 2.21$.

\smallskip
{\bf Remark 2.} Following the preceding proposition, we discuss some smoothness properties of the density of $\widehat \gamma$. For every $t\geq 1$, we set $\widehat F(t)=\P(\widehat{\mathcal{C}}\geq t)$, and we get from (\ref{eq:c*-rde}) that
\begin{equation}
\label{eq:F*}
\widehat F(t)=2\,\Big(\frac{t-1}{t}\Big)^2\int_t^{\infty} \mathrm{d}x \,\frac{x}{(x-1)^3} \,\P(\widehat{\mathcal{C}}+\mathcal{C}\geq x).
\end{equation}
Since $\P(\widehat{\mathcal{C}}+\mathcal{C}\geq t)=1$ for every $t\in [1,2]$, we obtain from the last display that
\begin{equation}
\label{eq:F*12}
\widehat F(t)= \frac{4t-2}{t^2}A_0-2A_0+1, \qquad \forall t\in[1,2],
\end{equation}
where
\begin{displaymath}
A_0 \colonequals  2-\int_2^{\infty} \mathrm{d}x \,\frac{x}{(x-1)^3} \,\P(\widehat{\mathcal{C}}+\mathcal{C}\geq x) \in \Big(\frac{1}{2},2\Big).
\end{displaymath}
Let $\widehat f=-\widehat F'$ be the density of the law $\widehat \gamma$. Then it follows from (\ref{eq:F*12}) that for all $t\in[1,2]$,
\begin{displaymath}
\widehat f(t)=4A_0 \times \frac{t-1}{t^3} \quad \mbox{ and }\quad \widehat f\,'(t)= 4A_0 \times \frac{3-2t}{t^4}.
\end{displaymath}
In particular, we have $\widehat f(1)=0$, $\widehat f(2)=A_0/2$ and $\widehat f\,'(\frac{3}{2})=0$. Numerical approximations of $\widehat f(2)$ show that $A_0\approx 0.976$.

For the density $f$ of the law of $\mathcal{C}$, it is shown in~\cite[Section 2.3]{CLG13} that there exists a constant $K_0\in (1,2)$ such that $f(t)=K_0t^{-2}$ for $t\in [1,2]$. The explicit forms of $f$ and $\widehat f$ over $[1,2]$ can be used to calculate the probability $\P(\widehat{\mathcal{C}}+\mathcal{C}\geq t)$ for $t\in [2,3]$ by convolution. The values of $\widehat F$ over $[2,3]$ are thus determined via the ordinary differential equation
\begin{equation}
\label{eq:ode-density}
t(t-1)\, \widehat F'(t)-2\,\widehat F(t)=-2\,\P(\widehat{\mathcal{C}}+\mathcal{C}\geq t),
\end{equation}
which is a direct consequence of~(\ref{eq:F*}). By solving this differential equation, we are able to get an explicit, yet complicated, expression of $\widehat F$ over $[2,3]$, in terms of the two (unknown) parameters $A_0$ and $K_0$ (numerical approximations of $f(1)$ show that $K_{0}\approx 1.477$). One can then verify that the density $\widehat f$ is continuously differentiable on $(1,3)$. Furthermore, it holds that
$$\widehat f\,''(2-)=\widehat f\,''(2+)=0\,,$$
and that $\widehat f$ is twice continuously differentiable on $(1,3)$. However, $\widehat f$ is not third-order differentiable at the point 2, as one has 
$$ \widehat f\,'''(2-)= \frac{3A_0}{4}, \quad \mbox{while } \;\;\;  \widehat f\,'''(2+)=\frac{3A_0}{4} -4 A_0 K_0.$$
This is similar to the singular behavior of the density $f$ pointed out in~\cite[Section 2.3]{CLG13}, where it is shown that $f''(2-) \neq f''(2+)$. One may conjecture that the density $\widehat f$ of $\widehat \gamma$ is twice continuously differentiable on the whole interval $(1,\infty)$, but not third-order differentiable at all integers $n\geq 2$.

We finally remark that $3/2$ is the global maximum point for the density $\widehat f$. In fact, we have seen that $\widehat f$ reaches it maximum at $3/2$ over the interval $[1,2]$. Meanwhile, it is elementary to verify, by differentiating~(\ref{eq:ode-density}), that the function $\widehat f$ is strictly decreasing over $[3/2,\infty)$.

\subsection{Proof of Theorem~\ref{thm:main-continu} and of Proposition~\ref{prop:lambda-expression}}
\label{sec:proof-continu}
We have seen in Proposition~\ref{prop:unif-meas-inv} that the probability measure $\mathcal{W}(T)\Theta(\mathrm{d}T)\bar \omega_T(\mathrm{d}\mathbf{v})$ on $\mathscr{T}^{*}$ is invariant under the shift $S$. Recall that $\mathcal{W}(T)$ follows an exponential distribution of mean~1 under $\Theta(\mathrm{d}T)$. Taking into account that $\mathcal{W}(T)>0$, $\Theta(\mathrm{d}T)$-a.s., we can then verify, in a similar way as in \cite[Proposition 2.6]{LIN}, that the shift $S$ acting on the probability space $(\mathscr{T}^{*},\mathcal{W}(T)\Theta(\mathrm{d}T)\bar \omega_T(\mathrm{d}\mathbf{v}))$ is ergodic. We shall apply Birkhoff's ergodic theorem to the three functionals defined below.

First, let $H_n(T,\mathbf{v})$ denote the height of the $n$-th branching point on the geodesic ray $\mathbf{v}$. One immediately verifies that, for every $n\geq 1$,
$$H_n= \sum_{i=0}^{n-1} H_1\circ S^i, $$
where $S^i$ stands for the $i$-th iterate of the shift $S$. It follows thus from the ergodic theorem that $\mathcal{W}(T)\Theta(\mathrm{d}T)\bar \omega_{T}(\mathrm{d}\mathbf{v})$-a.s.,
\begin{equation}
\label{ergodic11}
\frac{1}{n} H_n \build{\longrightarrow}_{n\to\infty}^{} \int H_1(T,\mathbf{v})\,\mathcal{W}(T)\,\Theta(\mathrm{d}T) \bar \omega_{T}(\mathrm{d}\mathbf{v}).
\end{equation}
Since the density $\mathcal{W}(T)$ is a.s.~strictly positive, the latter convergence also holds $\Theta(\mathrm{d}T)\bar \omega_{T}(\mathrm{d}\mathbf{v})$-a.s. By the definition of $H_1$ and then the branching property of the Yule tree,
\begin{eqnarray}
\int H_1(T,\mathbf{v})\,\mathcal{W}(T)\,\Theta(\mathrm{d}T) \bar \omega_{T}(\mathrm{d}\mathbf{v}) &= & \int z_{\varnothing} \,\mathcal{W}(T)\,\Theta(\mathrm{d}T)  \nonumber\\
&=& \int z_{\varnothing} \,e^{-z_{\varnothing}}(\mathcal{W}(T_{(1)})+\mathcal{W}(T_{(2)}))\,\Theta(\mathrm{d}T)  \nonumber  \\
&=& 2 \Big(\int_{0}^{\infty} z e^{-2z}\mathrm{d}z \Big)\times \int \mathcal{W}(T)\,\Theta(\mathrm{d}T)\, =\, \frac{1}{2}\,. \label{ergodic111}
\end{eqnarray}

Secondly, for a fixed geodesic ray $\mathbf{v}=(v_1,v_2,\ldots) \in \{1,2\}^{\N}$, we let $\mathbf{x}_{n,\mathbf{v}}$ denote the $n+$1-st branching point on the geodesic ray $\mathbf{v}$, i.e.~$\mathbf{x}_{n,\mathbf{v}}=((v_1,\ldots,v_n),H_{n+1}(T,\mathbf{v}))$. We set, for every $n\geq 1$, the functional $F_n(T,\mathbf{v}) \colonequals \log \bar \omega_T(\{\mathbf{u}\in\partial T \colon \mathbf{x}_{n,\mathbf{v}}\prec\mathbf{u}\})$. In particular,
$$F_1(T,\mathbf{v})=\log \frac{\mathcal{W}(T_{(v_1)})}{\mathcal{W}(T_{(1)})+\mathcal{W}(T_{(2)})}.$$
By the definition of $\bar \omega_T$, one can check that
$$F_n=\sum_{i=0}^{n-1} F_1\circ S^i.$$
Using the ergodic theorem again, we have $\Theta(\mathrm{d}T)\,\bar \omega_{T}(\mathrm{d}\mathbf{v})$-a.s.,
\begin{equation*}
\frac{1}{n} F_n \build{\longrightarrow}_{n\to\infty}^{} \int F_1(T,\mathbf{v})\,\mathcal{W}(T)\, \Theta(\mathrm{d}T)\bar \omega_{T}(\mathrm{d}\mathbf{v}),
\end{equation*}
in which the limit can be calculated as follows:
\begin{align}
& \;\int F_1(T,\mathbf{v})\,\mathcal{W}(T)\, \Theta(\mathrm{d}T)\bar \omega_{T}(\mathrm{d}\mathbf{v}) \nonumber\\
=\;& \;\sum_{i=1}^{2} \int e^{-z_{\varnothing}}\mathcal{W}(T_{(i)}) \log \frac{\mathcal{W}(T_{(i)})}{\mathcal{W}(T_{(1)})+\mathcal{W}(T_{(2)})}  \, \Theta(\mathrm{d}T) \nonumber\\
=\;& \;\sum_{i=1}^{2} \int e^{-z_{\varnothing}}\mathcal{W}(T_{(i)}) \log \mathcal{W}(T_{(i)}) \, \Theta(\mathrm{d}T)- \int \Big(\sum_{i=1}^{2} e^{-z_{\varnothing}}\mathcal{W}(T_{(i)}) \log\big(e^{z_{\varnothing}} \mathcal{W}(T)\big) \Big) \, \Theta(\mathrm{d}T) \nonumber\\
=\;& \;2\Big(\int_{0}^{\infty} e^{-2z}\mathrm{d}z\Big) \times \int \mathcal{W}(T) \log \mathcal{W}(T) \, \Theta(\mathrm{d}T)- \int \mathcal{W}(T) \log \big(e^{z_{\varnothing}} \mathcal{W}(T)\big)\, \Theta(\mathrm{d}T) \nonumber \\
=\;& \;-\int z_{\varnothing}\mathcal{W}(T) \, \Theta(\mathrm{d}T). \label{ergodic22}
\end{align}
Note that we used the fact that $\int \mathcal{W}(T)|\log \mathcal{W}(T)| \, \Theta(\mathrm{d}T)< \infty$ to derive the last equality. In view of (\ref{ergodic111}), we see that $\Theta(\mathrm{d}T)\,\bar \omega_{T}(\mathrm{d}\mathbf{v})$-a.s., $F_n/n$ converges to $-\frac{1}{2}$ whereas $H_n/n$ converges to~$\frac{1}{2}$. By considering the ratio $F_n/H_n$ and taking $n\to \infty$, we get that $\Theta(\mathrm{d}T)$-a.s.~$\bar\omega_T(\mathrm{d}\mathbf{v})$-a.e.,
\begin{displaymath}
\lim_{r\to \infty} \frac{1}{r}\log \bar \omega_T(\mathcal{B}(\mathbf{v},r))=-1,
\end{displaymath}
from which the convergence (\ref{eq:loc-dim-unif}) readily follows.

Thirdly, we turn to the harmonic measure $\mu_T$ and set, for every $n\geq 1$, the functional $G_n(T,\mathbf{v}) \colonequals \log \mu_T(\{\mathbf{u}\in\partial T\colon \mathbf{x}_{n,\mathbf{v}}\prec\mathbf{u}\})$. In particular,
$$G_1(T,\mathbf{v})=\log \frac{\mathcal{C}(T_{(v_1)})}{\mathcal{C}(T_{(1)})+\mathcal{C}(T_{(2)})}.$$
The flow property of the harmonic measure $\mu_T$ (see Lemma 2.3 in~\cite{LIN}) yields that
$$G_n=\sum_{i=0}^{n-1} G_1\circ S^i.$$
Similarly we have the $\Theta(\mathrm{d}T)\bar \omega_{T}(\mathrm{d}\mathbf{v})$-almost sure convergence
\begin{equation}
\label{ergodic3}
\frac{1}{n} G_n \build{\longrightarrow}_{n\to\infty}^{} \int G_1(T,\mathbf{v})\,\mathcal{W}(T)\, \Theta(\mathrm{d}T)\bar \omega_{T}(\mathrm{d}\mathbf{v}),
\end{equation}
and we calculate the limit
\begin{align*}
&\; \int G_1(T,\mathbf{v})\,\mathcal{W}(T)\, \Theta(\mathrm{d}T)\bar \omega_{T}(\mathrm{d}\mathbf{v})\\
=\;& \;\sum_{i=1}^{2}\int  e^{-z_{\varnothing}}\mathcal{W}(T_{(i)}) \log \frac{\mathcal{C}(T_{(i)})}{\mathcal{C}(T_{(1)})+\mathcal{C}(T_{(2)})}  \, \Theta(\mathrm{d}T)  \\
=\;&\;2\Big(\int_{0}^{\infty}e^{-2z}\mathrm{d}z\Big)\times \int \mathcal{W}(T_{(1)}) \log \frac{\mathcal{C}(T_{(1)})}{\mathcal{C}(T_{(1)})+\mathcal{C}(T_{(2)})} \, \Theta(\mathrm{d}T)\\
=\;&\; \int \mathcal{W}(T_{(1)}) \log \frac{\mathcal{C}(T_{(1)})}{\mathcal{C}(T_{(1)})+\mathcal{C}(T_{(2)})} \, \Theta(\mathrm{d}T).
\end{align*}
Putting the convergence (\ref{ergodic3}) together with (\ref{ergodic11}) and (\ref{ergodic111}), we see that $\Theta(\mathrm{d}T)$-a.s.~$\bar\omega_T(\mathrm{d}\mathbf{v})$-a.e.,
\begin{displaymath}
\lim_{r\to \infty} \frac{1}{r}\log  \mu_T(\mathcal{B}(\mathbf{v},r))=2\, \int \mathcal{W}(T_{(1)}) \log \frac{\mathcal{C}(T_{(1)})}{\mathcal{C}(T_{(1)})+\mathcal{C}(T_{(2)})} \, \Theta(\mathrm{d}T).
\end{displaymath}
Using the branching property of the Yule tree and recalling the notation in Section~\ref{sec:continu-conductance}, we have therefore $\P$-a.s.~$\omega(\mathrm{d}\mathbf{v})$-a.e.~that
\begin{displaymath}
\lim_{r\downarrow 0} \frac{\log \mu (\mathcal{B}_{\bd}(\mathbf{v},r))}{\log r}= 2\, \E \bigg[\log\Big(\frac{\widehat{\mathcal{C}}+\mathcal{C}}{\widehat{\mathcal{C}}}\Big)\bigg],
\end{displaymath}
where $\widehat{\mathcal{C}}$ and $\mathcal{C}$ are supposed to be independent under the probability measure $\P$. However, by taking $g(x)=\log(x)$ in (\ref{eq:g-c-c*}), we see that
\begin{equation}
\label{eq:lambda-log}
\E\big[\widehat{\mathcal{C}}\big]-1=2\,\E\bigg[\log \Big(\frac{\widehat{\mathcal{C}}+\mathcal{C}}{\widehat{\mathcal{C}}}\Big)\bigg].
\end{equation}
We define $\lambda \colonequals \E[\widehat{\mathcal{C}}]-1$. The proof of the convergence~(\ref{eq:loc-dim-harm}) is hence completed.

Finally, in view of Proposition~\ref{prop:c*-law}, it only remains to verify that $\lambda>1$. In fact, we know from the display following~(\ref{ergodic3}) that
\begin{displaymath}
\lambda =2\int e^{-z_{\varnothing}}\bigg(\sum\limits_{i=1}^2 \mathcal{W}(T_{(i)}) \log \frac{\mathcal{C}(T_{(1)})+\mathcal{C}(T_{(2)})}{\mathcal{C}(T_{(i)})}\bigg) \Theta(\mathrm{d}T).
\end{displaymath}
By concavity of the logarithm,
\begin{displaymath}
\sum\limits_{i=1}^2 \frac{\mathcal{W}(T_{(i)})}{\mathcal{W}(T_{(1)})+\mathcal{W}(T_{(2)})} \log\bigg( \frac{\mathcal{W}(T_{(1)})+\mathcal{W}(T_{(2)})}{\mathcal{W}(T_{(i)})}\cdot \frac{\mathcal{C}(T_{(i)})}{\mathcal{C}(T_{(1)})+\mathcal{C}(T_{(2)})}\bigg)\leq 0,
\end{displaymath}
which entails that
\begin{displaymath}
\sum\limits_{i=1}^2 \mathcal{W}(T_{(i)}) \log \frac{\mathcal{C}(T_{(1)})+\mathcal{C}(T_{(2)})}{\mathcal{C}(T_{(i)})} \geq \sum\limits_{i=1}^2 \mathcal{W}(T_{(i)}) \log \frac{\mathcal{W}(T_{(1)})+\mathcal{W}(T_{(2)})}{\mathcal{W}(T_{(i)})}.
\end{displaymath}
Notice that the previous inequality is strict if and only if for $i\in \{1,2\}$,
\begin{displaymath}
\frac{\mathcal{W}(T_{(i)})}{\mathcal{W}(T_{(1)})+\mathcal{W}(T_{(2)})}\neq \frac{\mathcal{C}(T_{(i)})}{\mathcal{C}(T_{(1)})+\mathcal{C}(T_{(2)})}.
\end{displaymath}
Since the latter property holds with positive probability under $\Theta(\mathrm{d}T)$, we have
\begin{align*}
\lambda \,\, >&\,\, 2\int e^{-z_{\varnothing}}\bigg(\sum\limits_{i=1}^2 \mathcal{W}(T_{(i)}) \log \frac{\mathcal{W}(T_{(1)})+\mathcal{W}(T_{(2)})}{\mathcal{W}(T_{(i)})}\bigg) \Theta(\mathrm{d}T) \\
\,=&\,\, -2 \int F_1(T,\mathbf{v})\,\mathcal{W}(T)\,\Theta(\mathrm{d}T)\,\bar \omega_{T}(\mathrm{d}\mathbf{v}).
\end{align*}
By (\ref{ergodic22}) and (\ref{ergodic111}), the right-hand side of the last display is equal to~1. We have therefore finished the proof of Theorem~\ref{thm:main-continu} and of Proposition~\ref{prop:lambda-expression}.

\subsection{The size-biased Yule tree $\widehat \Gamma$}
\label{sec:size-biased-yule}

Let $(\widehat \Gamma, \widehat{\mathbf{v}})$ denote a random element in $\mathscr{T}^{*}$ distributed according to $\mathcal{W}(T)\Theta(\mathrm{d}T)\bar \omega_T(\mathrm{d}\mathbf{v})$. We give here a direct construction of $(\widehat \Gamma, \widehat{\mathbf{v}})\in \mathscr{T}^{*}$  under the probability measure $\P$. In the following description, all the random variables involved are supposed to be defined under $\P$.

First, we introduce a sequence $(a_k)_{k\geq 1}$ of i.i.d.~random variables uniformly distributed over $\{1,2\}$, and another sequence $(J_k)_{k\geq 1}$ of i.i.d.~real random variables exponentially distributed with mean $1/2$. Let $(\Gamma^{(k)})_{k\geq 1}$ be a collection of independent Yule trees, each of which corresponding respectively to the collection $(Z^{(k)}_v)_{v\in \v}$ with the notation introduced in Section~\ref{sec:yule-tree}. We assume that $(a_k)_{k\geq 1}, (J_k)_{k\geq 1}$ and $(\Gamma^{(k)})_{k\geq 1}$ are independent.

For every integer $n\geq 1$, we set $\mathsf{v}_n=(a_1,a_2,\ldots,a_n)\in \{1,2\}^n$ and $Z_{\mathsf{v}_n}=\sum_{k=1}^n J_k$. We write $\widetilde{\mathsf{v}}_n=(a_1,a_2,\ldots,a_{n-1},3-a_n)\in \{1,2\}^n$ for the unique sibling of $\mathsf{v}_n$ in $\v$, and define the subtree $\Gamma\langle \widetilde{\mathsf{v}}_n \rangle$ grafted at $\widetilde{\mathsf{v}}_n$ as
$$\Gamma\langle \widetilde{\mathsf{v}}_n \rangle  \colonequals \Big(\{\widetilde{\mathsf{v}}_n \}\times (Z_{\mathsf{v}_n},Z_{\mathsf{v}_n}+Z^{(n)}_\varnothing]\Big) \cup  \Bigg(\bigcup_{v\in\mathcal{V}\backslash\{\varnothing\}} \{\widetilde{\mathsf{v}}_n v\} \times (Z_{\mathsf{v}_n}+Z^{(n)}_{\bar v}, Z_{\mathsf{v}_n}+ Z^{(n)}_v]\Bigg).$$
Finally, let $\widehat \Gamma$ be the following Yule-type tree
$$\widehat \Gamma \colonequals (\{\varnothing\}\times [0,Z_{\mathsf{v}_1}]) \cup  \bigg(\bigcup_{n\geq 1} \{\mathsf{v}_n\} \times (Z_{\mathsf{v}_n}, Z_{\mathsf{v}_{n+1}}]\bigg) \cup \bigg(\bigcup_{n\geq 1} \Gamma\langle \widetilde{\mathsf{v}}_n \rangle \bigg).$$
We will call $\widehat \Gamma$ the size-biased Yule tree. See Figure~\ref{figure:yulebias} for an illustration.

\begin{figure}[!h]
\begin{center}
\includegraphics[width=7.5cm]{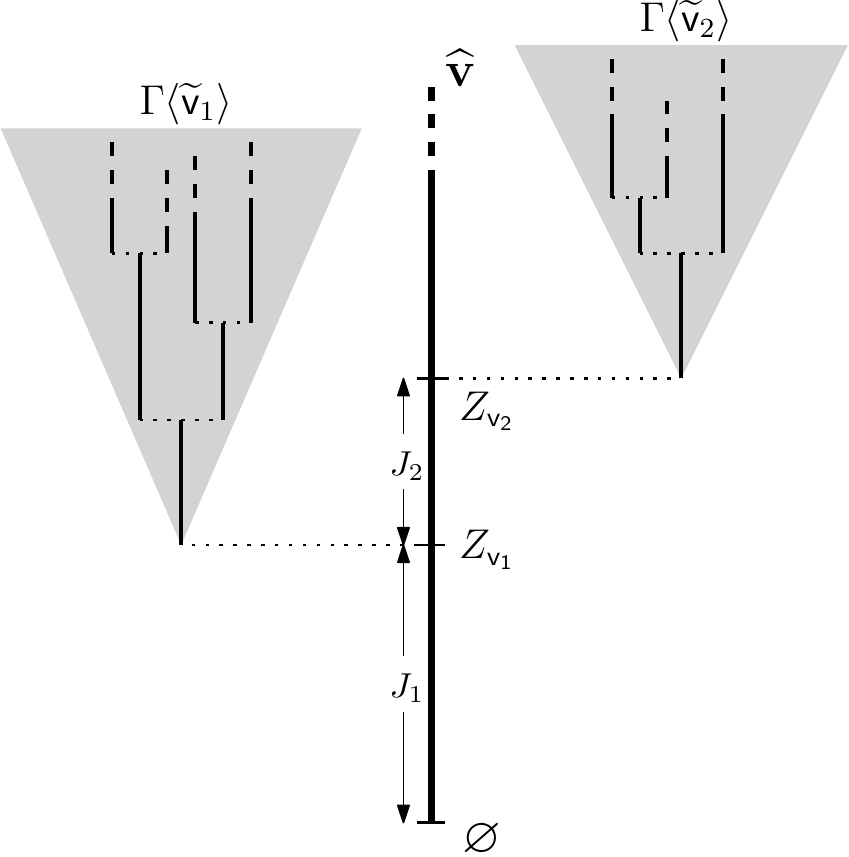}
\caption{Schematic representation of the size-biased Yule tree $\widehat \Gamma$ \label{figure:yulebias}}
\end{center}
\end{figure}

\begin{lemma}
\label{lem:yulebias}
The pair $\big(\widehat \Gamma, \widehat{\mathbf{v}}=(a_1,a_2,\ldots)\big)\in \mathscr{T}\times \{1,2\}^{\N} $ constructed above follows the required distribution $\mathcal{W}(T)\Theta(\mathrm{d}T)\bar \omega_T(\mathrm{d}\mathbf{v})$.
\end{lemma}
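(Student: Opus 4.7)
The plan is to verify that the constructed pair $(\widehat\Gamma,\widehat{\mathbf{v}})$ has law $\mathcal{W}(T)\Theta(\mathrm{d}T)\bar\omega_T(\mathrm{d}\mathbf{v})$ by establishing a one-step spine decomposition of this target measure and then iterating it along the spine. I first single out the first branching point. Writing $T_{(1)}, T_{(2)}$ for the two subtrees at the root branching and $\mathcal{W}_i \colonequals \mathcal{W}(T_{(i)})$, the branching property of $\Gamma$ used in the proof of Proposition~\ref{prop:unif-meas-inv} says that under $\Theta(\mathrm{d}T)$ the triple $(z_\varnothing, T_{(1)}, T_{(2)})$ has law $\mathrm{Exp}(1)\otimes\Theta\otimes\Theta$. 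By the definition of $\bar\omega_T$, for $i\in\{1,2\}$,
\[
\mathcal{W}(T)\,\bar\omega_T(\{\mathbf{v}\colon v_1=i\}) \;=\; e^{-z_\varnothing}\mathcal{W}_i.
\]

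The core computation is then elementary. Integrating the above identity gives $\P(v_1=i) = \E[e^{-z_\varnothing}]\E[\mathcal{W}_i] = \tfrac12$, so under the size-biased law $v_1$ is uniform on $\{1,2\}$, matching the law of $a_1$. Conditioning further on $v_1=i$, the joint density of $(z_\varnothing, T_{(1)}, T_{(2)})$ with respect to $\mathrm{Exp}(1)\otimes\Theta\otimes\Theta$ is proportional to $e^{-z_\varnothing}\mathcal{W}_i$; since $\mathcal{W}_{3-i}$ does not appear in this density, the off-spine subtree $T_{(3-i)}$ keeps its original law $\Theta$ (matching $\Gamma^{(1)}$), while $z_\varnothing$ acquires density $\propto e^{-z}\cdot\E[\mathcal{W}_i\mid z_\varnothing=z] = e^{-z}\cdot e^{-z}$, i.e.~$\mathrm{Exp}(2)$ (matching $J_1$), independently of the on-spine subtree $T_{(i)}$ which inherits the reweighted law $\mathcal{W}(T_{(i)})\,\Theta(\mathrm{d}T_{(i)})$. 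Combining with the shift identity $\bar\omega_T(\mathrm{d}\mathbf{v})\mathbf{1}\{v_1=i\} = \tfrac{\mathcal{W}_i}{\mathcal{W}_1+\mathcal{W}_2}\,\bar\omega_{T_{(i)}}(\mathrm{d}\widetilde{\mathbf{v}})$, the pair $(T_{(v_1)},\widetilde{\mathbf{v}})$ is again distributed according to $\mathcal{W}(T')\Theta(\mathrm{d}T')\bar\omega_{T'}(\mathrm{d}\mathbf{v}')$, independently of $(z_\varnothing, v_1, T_{(3-v_1)})$.

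Iterating this decomposition $n$ times produces independent copies $(a_k,J_k,\Gamma^{(k)})_{1\le k\le n}$ with exactly the marginals of the construction, together with a remainder pair that still has the size-biased law. To pass from all finite-$n$ marginals to the full object, I would appeal to a monotone-class argument: any bounded measurable functional on $\mathscr{T}^{*}$ can be approximated by functionals depending only on the portion of the tree below a fixed height $r$, and since $Z_{\mathsf{v}_n} = \sum_{k\le n}J_k \to\infty$ almost surely by the strong law of large numbers, for every $r>0$ there is almost surely a (random) finite $n$ beyond which the spine has left height $r$ and no further step of the recursion affects the restricted tree.

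The main potential obstacle is bookkeeping rather than probability: one must match the indexing on $\v$ inherited from the construction (spine vertices $\mathsf{v}_n$, sibling vertices $\widetilde{\mathsf{v}}_n$, and the translated labels $\widetilde{\mathsf{v}}_n v$ inside each grafted $\Gamma^{(n)}$) against the generic labelling $(z_v)_{v\in\v}$ used to define $\Theta$, and verify that the resulting collection indeed lies in $\mathscr{T}$ (property (ii) there follows from $Z_{\mathsf{v}_n}\to\infty$). Once the identification is made, the probabilistic content is exactly the one-step calculation above applied recursively.
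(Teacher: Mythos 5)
Your proposal is correct and is exactly the kind of spine decomposition the paper has in mind when it says the proof follows from ``similar calculations carried out in the previous section'': you peel off the first branching data $(z_\varnothing, v_1, T_{(3-v_1)})$ using the identity $\mathcal{W}(T)=e^{-z_\varnothing}(\mathcal{W}_1+\mathcal{W}_2)$ and the defining formula for $\bar\omega_T$, observe that the size-biasing factor $e^{-z_\varnothing}\mathcal{W}_{v_1}$ splits as a product so the three pieces decouple, and iterate. The final truncation/monotone-class step is the right way to pass from one-step marginals to the full law, and the observation that property (ii) of $\mathscr{T}$ holds because $Z_{\mathsf{v}_n}\to\infty$ is exactly what is needed.

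One small but genuine slip in the write-up: the displayed formula ``$z_\varnothing$ acquires density $\propto e^{-z}\cdot\E[\mathcal{W}_i\mid z_\varnothing=z] = e^{-z}\cdot e^{-z}$'' is incorrect as written, since $z_\varnothing$ and $T_{(i)}$ are independent under $\Theta$ so $\E[\mathcal{W}_i\mid z_\varnothing=z]=\E[\mathcal{W}_i]=1$, not $e^{-z}$. The second factor of $e^{-z}$ should come from the reweighting factor $e^{-z_\varnothing}$ itself (evaluated at $z_\varnothing=z$), the first $e^{-z}$ being the $\mathrm{Exp}(1)$ base density with respect to Lebesgue; the product is $e^{-2z}$ up to the normalizing constant $2=1/\P(v_1=i)$, giving $\mathrm{Exp}(2)$ as you conclude. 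So the conclusion is right, but the attribution of the two exponential factors is garbled and should be fixed before this could be read by someone checking it line by line.
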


The proof of this lemma is based on similar calculations carried out in the previous section. We leave the details to the reader. Note that the analog for discrete-time Galton--Watson trees can be found in Exercise 16.9 in \cite{LP10}.

\subsection{The size-biased reduced tree $\widehat \Delta$}
\label{sec:size-biased-reduced}
Recall the bijection $\Psi \colon (v,r)\in \Delta_0 \mapsto (v,-\log (1-r))\in \Gamma$ introduced in Section~\ref{sec:yule-tree}. We now apply the inverse mapping $\Psi^{-1}(v,s)=(v,1-e^{-s})$ to the size-biased Yule tree $\widehat \Gamma$.

We keep the notation of the preceding section. For every integer $n\geq 1$, we set $V_n =1-\exp(-J_n)$, and then by induction,
\begin{displaymath}
\widehat Y_{\mathsf{v}_n}= \widehat Y_{\mathsf{v}_{n-1}}+(1-\widehat Y_{\mathsf{v}_{n-1}})V_n \,.
\end{displaymath}
Notice that $(V_n)_{n\geq 1}$ are i.i.d.~real random variables with density function $2(1-x)\mathbf{1}_{x\in[0,1]}$, and that for every $n\geq 1$, $\widehat Y_{\mathsf{v}_n}=1-\exp(-Z_{\mathsf{v}_n})$. Thus,
$$\Psi^{-1}(\widehat \Gamma)=(\{\varnothing\}\times [0,\widehat Y_{\mathsf{v}_1}]) \cup  \bigg(\bigcup_{n\geq 1} \{\mathsf{v}_n\} \times (\widehat Y_{\mathsf{v}_n}, \widehat Y_{\mathsf{v}_{n+1}}]\bigg) \cup \bigg(\bigcup_{n\geq 1} \Psi^{-1}\big(\Gamma\langle \widetilde{\mathsf{v}}_n \rangle \big)\bigg).$$
We point out that, independently for every $n\geq 1$, $\Psi^{-1}\big( \Gamma \langle \widetilde{\mathsf{v}}_n \rangle \big)$ is a rescaled copy of the precompact reduced tree $\Delta_0$ with the scaling factor $(1-\widehat Y_{\mathsf{v}_n})$. From now on we will denote $\Psi^{-1}(\widehat \Gamma)$ by $\widehat \Delta_0$. See Figure~\ref{figure:reducedbias} for an illustration of $\widehat \Delta_0$.

\begin{figure}[!h]
\begin{center}
\includegraphics[width=11cm]{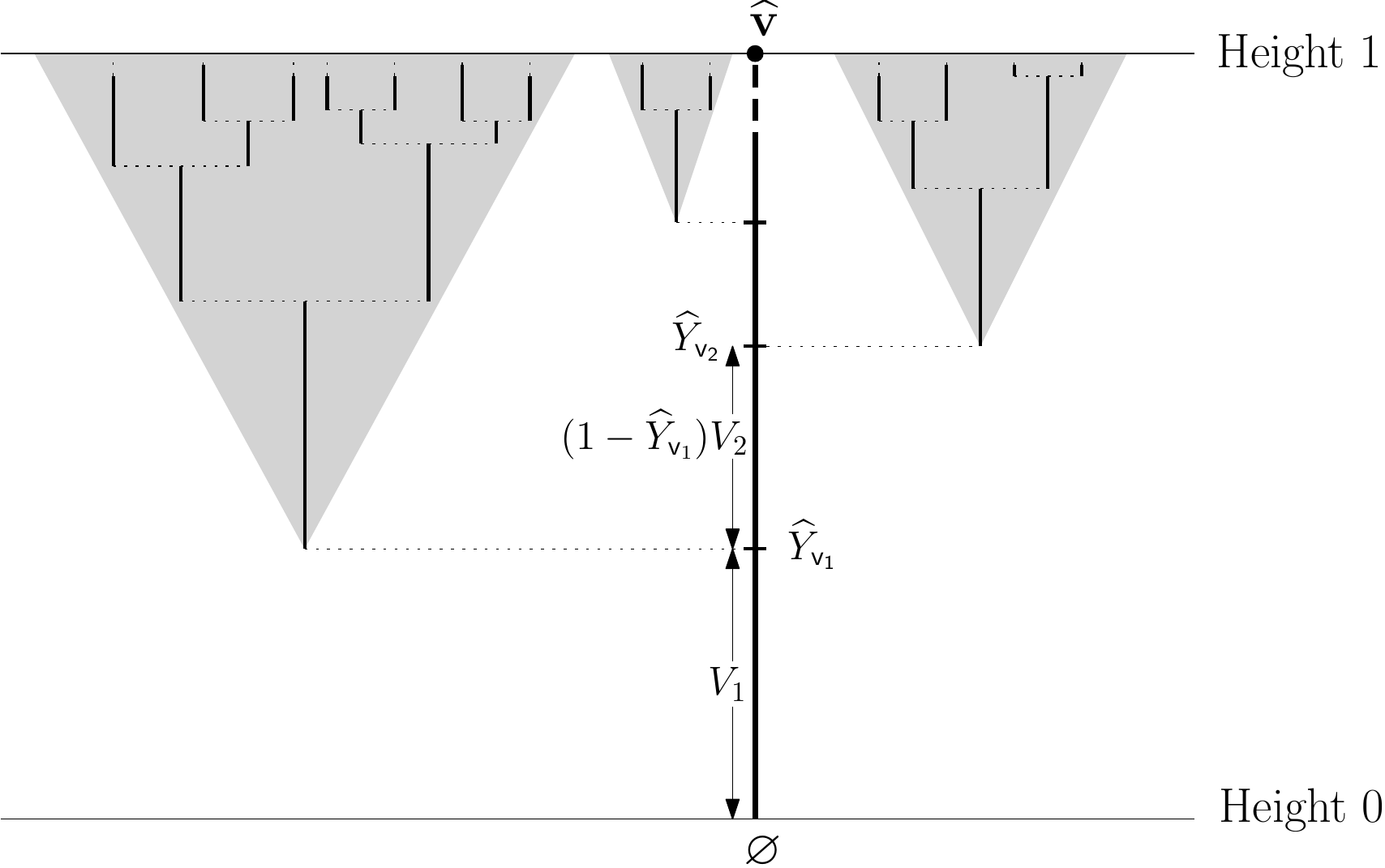}
\caption{Schematic representation of the random tree $\widehat \Delta_0$ \label{figure:reducedbias}}
\end{center}
\end{figure}

One can define, as for $\Delta_0$, the intrinsic metric $\mathbf{d}$ on $\widehat \Delta_0$ such that $(\widehat \Delta_0, \mathbf{d})$ is a noncompact $\mathbb{R}$-tree, and for every $x=(v,r)\in \widehat \Delta_0$, we have $\mathbf{d}((\varnothing,0),x)=r$. Then we let $\widehat \Delta$ be the completion of $\widehat \Delta_0$ with respect to $\mathbf{d}$, so that $(\widehat \Delta, \mathbf{d})$ is a compact $\R$-tree. In fact, $\widehat \Delta=\widehat \Delta_0\cup \partial \widehat \Delta$, and the boundary $\partial \widehat \Delta \colonequals \{x\in \widehat \Delta\colon \mathbf{d}((\varnothing,0),x)=1\}$ is canonically identified with $\{1,2\}^{\N}$. We will call $\widehat \Delta$ the size-biased reduced tree. We keep the same notation $H(x)=\mathbf{d}((\varnothing,0),x)$ for the height of $x\in\widehat \Delta$. For every $\varepsilon\in (0,1)$, we set the truncation of $\widehat\Delta$ at height $1-\ve$
$$\widehat\Delta_\ve \colonequals \{x\in\widehat \Delta\colon H(x)\leq 1-\ve\}.$$

One can think of both $\Delta$ and $\widehat \Delta$ as electric networks of ideal resistors with unit resistance per unit length, and define $\mathcal{C}(\Delta)$ (resp.~$\mathcal{C}(\widehat \Delta)$) be the effective conductance between the root and the set $\partial \Delta$ (resp.~$\partial \widehat \Delta$) in the corresponding network. As explained in~\cite[Section 2.3]{CLG13}, $\mathcal{C}(\Delta)$ is identically distributed as the random variable $\mathcal{C}$ defined in Section~\ref{sec:continu-conductance}. Analogously, $\mathcal{C}(\widehat \Delta)$ has the same distribution as $\widehat{\mathcal{C}}$ according to Lemma~\ref{lem:yulebias}. We thus call $\widehat{\mathcal{C}}$ the continuous conductance of the size-biased reduced tree $\widehat \Delta$.

\section{The discrete setting}
\label{sec:discrete-setting-typical}

\subsection{Notation for discrete trees}
\label{sec:notation-discrete-tree}
We set
$$\mathcal{U} \colonequals\bigcup_{n=0}^\infty \N^n,$$
where $\N=\{1,2,\ldots\}$ and $\N^0=\{\varnothing\}$ by convention. If $u=(u_1,\ldots,u_n)\in\mathcal{U}$, $|u|=n$ is called the generation (or height) of $u$. In particular, $|\varnothing|=0$. A (rooted ordered) tree $\mathcal{T}$ is a subset of $\mathcal{U}$ such that the following holds:
\begin{enumerate}
\item[(i)] $\varnothing\in \mathcal{T}$;

\item[(ii)] If $u=(u_1,\ldots,u_n)\in \mathcal{T} \backslash\{\varnothing\}$, then $\bar u\colonequals (u_1,\ldots,u_{n-1})\in \mathcal{T}$;

\item[(iii)] For every $u=(u_1,\ldots,u_n)\in \mathcal{T}$, there exists an integer $k_u(\mathcal{T})\geq 0$ such that, for every $j\in\N$, $(u_1,\ldots,u_n,j)\in \mathcal{T}$ if and only if $1\leq j\leq k_u(\mathcal{T})$.
\end{enumerate}
The notions of a child and a parent of a vertex of $\mathcal{T}$ are defined in an obvious way. We write~$\prec$ for the genealogical order on $\mathcal{T}$. The quantity $k_u(\mathcal{T})$ in (iii) is called the number of children of~$u$ in $\mathcal{T}$. We always view a tree $\mathcal{T}$ as a graph whose vertices are the elements of $\mathcal{T}$ and whose edges are the pairs $\{\bar u, u\}$ for all $u\in \mathcal{T}\backslash \{\varnothing\}$.

If $\mathcal{T}$ is finite, we call it a plane tree. The set of all plane trees is denoted by $\mathbb{T}_f$. For an infinite tree $\mathcal{T}$, we say it has a single infinite line of descent if there exists a unique sequence of positive integers $(u_n)_{n\geq 1}$ such that $(u_1,u_2,\ldots,u_n)\in \mathcal{T}$ for all $n\geq 1$. We denote by $\mathbb{T}_{\infty}$ the set of all infinite trees that have a single infinite line of descent.

The height of a tree $\mathcal{T}$ is written as $h(\mathcal{T})\colonequals \sup\{|u|\colon u\in \mathcal{T}\}$. The set of all vertices of $\mathcal{T}$ at generation~$n$ is denoted by $\mathcal{T}_n\colonequals \{u \in \mathcal{T} \colon |u|=n\}$. If $u\in \mathcal{T}$, the subtree of descendants of $u$ is $\widetilde{\mathcal{T}}[u]\colonequals \{u'\in \mathcal{T} \colon u\prec u'\}$. Note that $\widetilde{\mathcal{T}}[u]$ is not a tree under our definition, but we can relabel its vertices to turn it into a tree, by setting $\mathcal{T}[u]\colonequals \{w\in \mathcal{U}\colon uw\in \mathcal{T}\}$.

Let $\mathcal{T}$ be a tree of height larger than $n$, and consider a simple random walk $X=(X_k)_{k\geq 0}$ on $\mathcal{T}$ starting from the root $\varnothing$, which is defined under the probability measure~$P^{\mathcal{T}}$. We write $\tau_n\colonequals \inf\{k\geq 0 \colon |X_k|=n\}$ for the first hitting time of generation $n$ by $X$, and we define the discrete harmonic measure $\mu_n^{\mathcal{T}}$ supported on $\mathcal{T}_n$ as the law of $X_{\tau_n}$ under $P^{\mathcal{T}}$.

\smallskip
{\bf Critical Galton--Watson trees.} Let $\theta$ be a non-degenerate probability measure on $\mathbb{Z}_{+}$, and assume that $\theta$ has mean one and finite variance $\sigma^2>0$. For every integer $n\geq 0$, we let $\mathsf{T}^{(n)}$ be a Galton--Watson tree with offspring distribution $\theta$, conditioned on non-extinction at generation $n$, viewed as a random element in $\mathbb{T}_{f}$. In particular, $\mathsf{T}^{(0)}$ is just a Galton--Watson tree with offspring distribution $\theta$. We suppose that the random trees $\mathsf{T}^{(n)}$ are defined under the probability measure $\P$.

Let $\mathsf{T}^{*n}$ be the reduced tree associated with $\mathsf{T}^{(n)}$, which is the random tree composed of all vertices of $\mathsf{T}^{(n)}$ that have descendants at generation $n$. It is always implicitly assumed that we have relabeled the vertices of $\mathsf{T}^{*n}$, preserving both the lexicographical order and the genealogical order, so that $\mathsf{T}^{*n}$ becomes a plane tree in the sense of our preceding definition.

For every $n\geq 1$ we set $q_n\colonequals \P(\#\mathsf{T}_n^{(0)}>0)$. By a standard result (see e.g.~Theorem 9.1~of~\cite[Chapter 1]{AN}) on the non-extinction probability up to generation $n$, we have
\begin{equation}
\label{eq:kolmogorov}
q_n \sim \frac{2}{n \sigma^2}\,, \qquad \mbox{ as } n \to \infty.
\end{equation}

{\bf Size-biased Galton--Watson tree.} Let $\widehat N$ be a random variable distributed according to the size-biased distribution of $\theta$, that is, for every $k\geq 0$, $\P(\widehat N=k)=k\,\theta(k)$. Take a sequence $(\widehat N_k)_{k\geq 1}$ of independent copies of $\widehat N$ defined under~$\P$. Now we follow Kesten~\cite{Kes86} and Lyons, Pementle and Peres~\cite{LPP95b} to construct a size-biased Galton--Watson tree $\widehat{\mathsf{T}}$ defined under~$\P$. First, the root $\varnothing$ of $\widehat{\mathsf{T}}$ is given a number $\widehat N_1$ of children. Choose one of these children uniformly at random, say $\mathbf{v}_1$. It has a number $\widehat N_2$ of children, whereas the other children of the root have independently ordinary $\theta$-Galton--Watson descendant trees. Again, among the children of $\mathbf{v}_1$ we choose one uniformly at random, call it $\mathbf{v}_2$, and give the others independent $\theta$-Galton--Watson descendant trees. Meanwhile the vertex $\mathbf{v}_2$ has a number $\widehat N_3$ of children. Since a.s.~$\widehat N\geq 1$, we can repeat this procedure infinitely many times. The resulting random infinite tree $\widehat{\mathsf{T}}$ is called a size-biased Galton--Watson tree  (see Figure~\ref{figure:gwbias}). It is clear by the construction that $\widehat{\mathsf{T}}$ is a random element in $\mathbb{T}_{\infty}$ and that its unique infinite line of descent is $(\mathbf{v}_1,\mathbf{v}_2,\ldots)$, which we will call the spine of $\widehat{\mathsf{T}}$.

\begin{figure}[!h]
\begin{center}
\includegraphics[width=9cm]{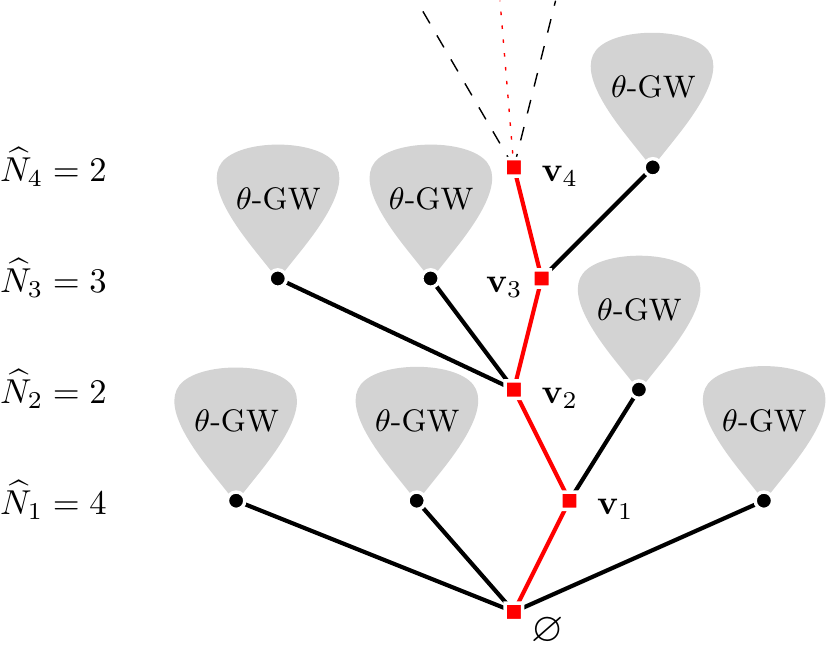}
\caption{Schematic representation of a size-biased Galton--Watson tree $\widehat{\mathsf{T}}$ \label{figure:gwbias}}
\end{center}
\end{figure}

Let $[\widehat{\mathsf{T}}]^{(n)}$ be the plane tree obtained from $\widehat{\mathsf{T}}$ by keeping only its first $n$ generations, i.e.,
$$[\widehat{\mathsf{T}}]^{(n)} \colonequals \{u\in \widehat{\mathsf{T}}\colon |u|\leq n\}.$$
It is shown in~\cite{Kes86} and~\cite{LPP95b} that $[\widehat{\mathsf{T}}]^{(n)}$ is distributed in $\mathbb{T}_f$ according to the law of $\mathsf{T}^{(n)}$ biased by $\#\mathsf{T}^{(n)}_n$. Moreover, conditionally given the first $n$ levels of $\widehat{\mathsf{T}}$, the vertex $\mathbf{v}_n$ on the spine is uniformly distributed on the $n$-th level of $\widehat{\mathsf{T}}$. Besides, notice that
\begin{displaymath}
\E\big[\# \mathsf{T}_n^{(n)}\big]=\frac{\E[\# \mathsf{T}_n^{(0)}]}{q_n}=\frac{1}{q_n}.
\end{displaymath}
All these observations are summarized in the following proposition.

\begin{proposition}\label{prop:size-bias-palm}
Let $F(\mathcal{T},v)$ be a nonnegative measurable function defined on $\mathbb{T}_f\times \mathcal{U}$. Then for every integer $n\geq 1$,
\begin{displaymath}
\E \Bigg[\sum\limits_{v\in \mathsf{T}_n^{(n)}}^{} F(\mathsf{T}^{(n)},v) \Bigg]= \frac{1}{q_n} \E\Big[F([\widehat{\mathsf{T}}]^{(n)}, \mathbf{v}_n)\Big].
\end{displaymath}
\end{proposition}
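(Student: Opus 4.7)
The plan is to derive the identity by combining the two facts the author has just recalled from \cite{Kes86} and \cite{LPP95b}: first, that the law of $[\widehat{\mathsf{T}}]^{(n)}$ is the law of $[\mathsf{T}^{(0)}]^{(n)}$ size-biased by $\#\mathsf{T}^{(n)}_n$; and second, that conditionally on $[\widehat{\mathsf{T}}]^{(n)}$, the spine vertex $\mathbf{v}_n$ is uniform on the $n$-th level of $\widehat{\mathsf{T}}$.

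Concretely, I would first reduce to the unconditioned tree $\mathsf{T}^{(0)}$. Since $\mathsf{T}^{(n)}$ is by definition $\mathsf{T}^{(0)}$ conditioned on $\{\#\mathsf{T}^{(0)}_n>0\}$ and the sum in the left-hand side of the claim vanishes on the complement of this event,
\begin{displaymath}
\E\Bigg[\sum_{v\in \mathsf{T}^{(n)}_n} F(\mathsf{T}^{(n)},v)\Bigg] \;=\; \frac{1}{q_n}\,\E\Bigg[\sum_{v\in \mathsf{T}^{(0)}_n} F(\mathsf{T}^{(0)},v)\Bigg].
\end{displaymath}
It therefore suffices to prove the Palm-type identity
\begin{displaymath}
\E\Bigg[\sum_{v\in \mathsf{T}^{(0)}_n} F(\mathsf{T}^{(0)},v)\Bigg] \;=\; \E\big[F([\widehat{\mathsf{T}}]^{(n)}, \mathbf{v}_n)\big].
\end{displaymath}
Since $F$ depends on the tree $\mathsf{T}^{(0)}$ only through its first $n$ generations $[\mathsf{T}^{(0)}]^{(n)}$, the left-hand side depends only on this truncation.

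The key input, established in \cite{Kes86} and \cite{LPP95b}, is that for every nonnegative measurable function $G$ on $\mathbb{T}_f$,
\begin{displaymath}
\E\big[G([\widehat{\mathsf{T}}]^{(n)})\big] \;=\; \E\big[\#\mathsf{T}^{(0)}_n \cdot G([\mathsf{T}^{(0)}]^{(n)})\big],
\end{displaymath}
which follows (by induction on $n$) from the construction: at each step along the spine one picks a child uniformly among a size-biased number of offspring, while the off-spine subtrees are ordinary Galton--Watson, and this precisely produces the $\#\mathsf{T}^{(0)}_n$-bias. Combined with the uniformity of $\mathbf{v}_n$ on $[\widehat{\mathsf{T}}]^{(n)}_n$ conditionally on $[\widehat{\mathsf{T}}]^{(n)}$, I would write
\begin{displaymath}
\E\big[F([\widehat{\mathsf{T}}]^{(n)},\mathbf{v}_n)\big] \;=\; \E\Bigg[\frac{1}{\#[\widehat{\mathsf{T}}]^{(n)}_n}\sum_{v\in [\widehat{\mathsf{T}}]^{(n)}_n} F([\widehat{\mathsf{T}}]^{(n)},v)\Bigg]
\end{displaymath}
and then apply the size-bias identity with $G(\mathcal{T})= (\#\mathcal{T}_n)^{-1}\sum_{v\in \mathcal{T}_n} F(\mathcal{T},v)$ (extended by $0$ when $\#\mathcal{T}_n=0$); the factor $\#\mathsf{T}^{(0)}_n$ cancels the $(\#\mathcal{T}_n)^{-1}$, which yields exactly $\E[\sum_{v\in \mathsf{T}^{(0)}_n} F(\mathsf{T}^{(0)},v)]$. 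Using the estimate \eqref{eq:kolmogorov} ensures that $q_n>0$ for all $n\geq 1$, so the conditioning is well defined.

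There is no serious obstacle here: everything is a bookkeeping exercise around the classical spinal decomposition. The only point that deserves some care is the proof of the size-bias identity, which should be done by induction on $n$; the base case $n=1$ is the definition of the size-biased offspring distribution, and the induction step uses the branching property together with the identity $\E[\#\mathsf{T}^{(0)}_n]=1$ coming from the criticality of $\theta$, to match the weight $\#[\widehat{\mathsf{T}}]^{(n)}_n$ produced by one more level on the spine with the expected bias contributed by the ordinary $\theta$-Galton--Watson subtrees attached at height $n-1$.
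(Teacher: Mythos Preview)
Your proposal is correct and follows exactly the route the paper takes: the paper gives no separate proof but presents the proposition as a summary of the facts recalled in the preceding paragraph (the size-biased law of $[\widehat{\mathsf{T}}]^{(n)}$ from \cite{Kes86,LPP95b}, the conditional uniformity of $\mathbf{v}_n$ on level $n$, and $\E[\#\mathsf{T}^{(n)}_n]=1/q_n$), and your argument just spells out the bookkeeping. One small remark: your sentence ``$F$ depends on the tree only through its first $n$ generations'' is not part of the hypotheses as stated, but some such identification is needed for the formula to be literal (since $[\widehat{\mathsf{T}}]^{(n)}$ has height exactly $n$ while $\mathsf{T}^{(n)}$ need not); the paper makes the same tacit identification, and in the only application the function $F$ indeed factors through the first $n$ levels.
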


For every integer $n\geq 1$, let $[\widehat{\mathsf{T}}]^n$ be the plane tree obtained from $\widehat{\mathsf{T}}$ by erasing the (infinite) tree of descendants of the vertex $\mathbf{v}_n$. By convention, the vertex $\mathbf{v}_n$ is kept in $[\widehat{\mathsf{T}}]^n$. Notice that in general $[\widehat{\mathsf{T}}]^n \neq [\widehat{\mathsf{T}}]^{(n)}$, since the height of $[\widehat{\mathsf{T}}]^n$ can be strictly greater than $n$.

We let $[\widehat{\mathsf{T}}]^{*n}$ denote the reduced tree associated with the plane tree $[\widehat{\mathsf{T}}]^n$ up to generation~$n$, which consists of all vertices of $[\widehat{\mathsf{T}}]^n$ that have (at least) one descendant at generation $n$. We implicitly assume that the relabelling has been done to turn $[\widehat{\mathsf{T}}]^{*n}$ into a tree. It is elementary to check that $[\widehat{\mathsf{T}}]^{*n}$ is also the reduced tree associated with $[\widehat{\mathsf{T}}]^{(n)}$ up to generation $n$.

\subsection{Convergence of discrete reduced trees}
\label{sec:conv-discr-reduc}

We briefly recall the result in \cite{CLG13} on the convergence of the discrete reduced trees $\mathsf{T}^{*n}$. For every real number $s\in[0,n]$, we write the truncation of the tree $\mathsf{T}^{*n}$ at level $n-\lfloor s\rfloor$ as
$$R_s(\mathsf{T}^{*n})\colonequals \big\{u\in \mathsf{T}^{*n}\colon |u|\leq n-\lfloor s\rfloor\big\}.$$

For every $\ve\in(0,1)$, we have set $\Delta_\ve=\{x\in\Delta\colon H(x)\leq 1-\ve\}$. We know that, for every fixed $\ve$, there is a.s.~no branching point of $\Delta$ at height $1-\ve$. The skeleton of $\Delta_\ve$ is defined as the following plane tree
\begin{displaymath}
\hbox{Sk}(\Delta_\ve)\colonequals \{\varnothing\}\cup\big\{v\in \mathcal{V}\backslash\{\varnothing\}\colon Y_{\bar v}\leq 1-\ve\big\}= \{\varnothing\}\cup \big\{v\in \mathcal{V}\backslash\{\varnothing\}\colon (\bar v,Y_{\bar v})\in \Delta_\ve\big\}.
\end{displaymath}

Consider then the set $\mathbb{T}_{f,bin}$ of all plane trees in which every vertex has either 0, 1 or 2 children. For $\mathcal{T}\in \mathbb{T}_{f,bin}$ we write $\mathcal{S}(\mathcal{T})$ for the set of all vertices of $\mathcal{T}$ having 0 or 2 children. Then there is a unique plane tree $\langle\mathcal{T}\rangle$ such that one can find a canonical bijection $u\mapsto w_u$ from $\langle\mathcal{T}\rangle$ onto $\mathcal{S}(\mathcal{T})$ that preserves the genealogical order and the lexicographical order of vertices.

The following result is Proposition 16 in \cite{CLG13}.
\begin{proposition}
\label{prop:conv-reduced-tree}
We can construct the reduced trees $\mathsf{T}^{*n}$ and the (continuous) tree $\Delta$ on the same probability space $(\Omega,\mathcal{F},\P)$ so that the following properties hold for every fixed $\ve\in(0,1)$ with $\P$-probability one.
\begin{enumerate}
\item[\rm(i)] For every
sufficiently large integer $n$, we have $R_{\ve n}(\mathsf{T}^{*n})\in \mathbb{T}_{f,bin}$ and $\langle R_{\ve n}(\mathsf{T}^{*n})\rangle  =\hbox{\rm Sk}(\Delta_\ve)$.
\item[\rm(ii)] For every sufficiently large $n$, such that the properties stated in {\rm (i)} hold, and for every $u\in \hbox{\rm Sk}(\Delta_\ve)$, let $w^{n,\ve}_u$ denote the  vertex of $\mathcal{S}(R_{\ve n}(\mathsf{T}^{*n}))$ corresponding to $u$ via the canonical bijection from $\langle R_{\ve n}(\mathsf{T}^{*n})\rangle $ onto $\mathcal{S}(R_{\ve n}(\mathsf{T}^{*n}))$. Then we have
$$\lim_{n\to\infty} \frac{1}{n} |w^{n,\ve}_u| = Y_{u} \wedge (1-\ve).$$
\end{enumerate}
\end{proposition}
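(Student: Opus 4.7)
The plan is to establish convergence in distribution of a suitable encoding of $\mathsf{T}^{*n}$ to $\Delta_\ve$, apply Skorokhod's representation theorem to realize all the objects on a common probability space, and then read off (i) and (ii) from the almost-sure coupling.

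First, I would prove the one-step result. Let $H_1^{(n)}$ be the height of the first branching point of $\mathsf{T}^{*n}$, i.e.\ the lowest vertex with at least two children in $\mathsf{T}^{*n}$. I claim that $H_1^{(n)}/n$ converges in distribution to a variable uniform on $[0,1]$, and that with probability tending to one this vertex has exactly two children in $\mathsf{T}^{*n}$. The computation rests on the branching property of $\theta$-Galton--Watson trees together with the sharp asymptotic $q_n\sim 2/(n\sigma^2)$ from (\ref{eq:kolmogorov}). Indeed, conditionally on a fixed ancestor at height $k$ having descendants at generation $n$, the conditional probability that exactly one of its children also has descendants at generation $n$ equals
\begin{displaymath}
r_{n,k} \;=\; \frac{q_{n-k-1}\,g'(1-q_{n-k-1})}{q_{n-k}},
\end{displaymath}
where $g$ is the generating function of $\theta$. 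Using $g'(1)=1$, $g''(1)=\sigma^2$ and (\ref{eq:kolmogorov}), a Taylor expansion yields $r_{n,k}=1-2/(n-k)+O(1/(n-k)^2)$, while the probability that three or more children survive to generation $n$ is $O(q_{n-k}^2)=O(1/n^2)$. Multiplying over $k=0,\ldots,\lfloor tn\rfloor-1$ gives $\P(H_1^{(n)}>tn)\to 1-t$, and a Borel--Cantelli-type sum shows that no branching below height $n-\lfloor\ve n\rfloor$ is ternary, with probability tending to one.

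Second, by the branching property, conditionally on $H_1^{(n)}$ and on the asymptotically certain event that the first branching is binary, the two subtrees of descendants of the two children of the branching point are independent critical Galton--Watson trees conditioned on reaching generation $n-H_1^{(n)}-1$. Iterating the first step along the binary tree structure yields, jointly for all $u\in\hbox{Sk}(\Delta_\ve)$, the convergence in distribution of $|w^{n,\ve}_u|/n$ to $Y_u\wedge (1-\ve)$, together with the binary nature of every branching located strictly below height $n-\lfloor\ve n\rfloor$. For fixed $\ve\in(0,1)$ the skeleton $\hbox{Sk}(\Delta_\ve)$ is a.s.\ a finite binary tree (the recursive construction of $\Delta$ stops at $u$ as soon as $Y_u\geq 1-\ve$), so only finitely many branching events need be controlled, and the $O(1/n)$ one-step errors sum to $o(1)$.

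Third, encoding $R_{\ve n}(\mathsf{T}^{*n})$ as the pair consisting of its skeleton (a finite plane binary tree) and the vector of rescaled heights of its branching vertices, the target space is Polish. I would then apply Skorokhod's representation theorem jointly along a sequence $\ve_j=2^{-j}$ to construct all the $\mathsf{T}^{*n}$ and the tree $\Delta$ on a common probability space $(\Omega,\mathcal F,\P)$ with almost-sure convergence for every $\ve_j$. For a general $\ve\in(0,1)$, a.s.\ no $Y_u$ equals $1-\ve$, so sandwiching between $\ve_j$'s transfers the almost-sure convergence to all $\ve$ simultaneously. Assertion (i) is then immediate: for $n$ large enough, no branching below height $n-\lfloor\ve n\rfloor$ is ternary, so $R_{\ve n}(\mathsf{T}^{*n})\in\T_{f,\mathrm{bin}}$, and the canonical bijection $\langle R_{\ve n}(\mathsf{T}^{*n})\rangle\to\mathcal S(R_{\ve n}(\mathsf{T}^{*n}))$ identifies the left-hand side with $\hbox{Sk}(\Delta_\ve)$ because both the combinatorial structure and the ordering of branching heights stabilize. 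Assertion (ii) is the already-proved convergence of rescaled heights.

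The main obstacle is the uniform control of the inductive step, in which both the height of the next branching and the absence of ternary branchings must be handled simultaneously with errors that do not blow up under iteration. The finiteness of $\hbox{Sk}(\Delta_\ve)$ for fixed $\ve$ is what makes the argument go through, since it reduces the iteration to a finite (random) number of essentially one-dimensional asymptotic estimates, each of size $O(1/n)$.
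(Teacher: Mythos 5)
The paper does not prove this proposition itself; it cites Proposition~16 of Curien and Le Gall \cite{CLG13}, and your overall plan---a one-step distributional estimate for the height of the first branching point of $\mathsf{T}^{*n}$, iteration along the a.s.\ finite binary skeleton of $\Delta_\ve$ via the branching property, and a Skorokhod representation to upgrade to an a.s.\ coupling---is indeed the natural route and, to the best of my knowledge, essentially the one taken in that reference.

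There is, however, an arithmetic error in your one-step estimate that makes the argument self-contradictory as written. Your formula $r_{n,k}=q_{n-k-1}\,g'(1-q_{n-k-1})/q_{n-k}$ for the conditional probability of exactly one surviving child is correct, but its Taylor expansion should read $r_{n,k}=1-\frac{1}{n-k}+O\big((n-k)^{-2}\big)$, not $1-\frac{2}{n-k}$. Setting $q=q_{n-k-1}$, one has $g'(1-q)=1-\sigma^2 q+o(q)$ and $q_{n-k}=1-g(1-q)=q-\frac{\sigma^2}{2}q^2+o(q^2)$, so
\begin{displaymath}
r_{n,k}=\frac{1-\sigma^2 q+o(q)}{1-\tfrac{\sigma^2}{2}q+o(q)}=1-\frac{\sigma^2}{2}\,q+o(q)\,,
\end{displaymath}
and $\frac{\sigma^2}{2}\,q_{n-k-1}\sim\frac{1}{n-k}$ by \eqref{eq:kolmogorov}. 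The constant is not innocuous: with $r_{n,k}=1-2/(n-k)+O(\cdot)$ the product over $0\leq k<tn$ converges to $(1-t)^2$, which contradicts the limit $\P(H_1^{(n)}>tn)\to 1-t$ that you assert in the very next sentence (the latter being the correct one, matching $Y_\varnothing=U_\varnothing$ uniform on $[0,1]$). Once the constant is fixed to $1$, the one-step uniform limit follows and the rest of the outline goes through. One minor point worth spelling out: the quantity controlling the absence of ternary branching at a branching event is the conditional probability $\P(B\geq 3\mid B\geq 2)=o(1)$, obtained from $\P(B\geq 2)\sim\frac{\sigma^2}{2}q^2$ together with $\P(B\geq 3)=o(q^2)$ (the latter by dominated convergence, using only the finite second moment of $\theta$), rather than the unconditional bound $O(q^2)$ you quoted; the conclusion is the same but the reasoning should be made explicit since $\theta$ need not have a finite third moment.
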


\subsection{Convergence of discrete conductances}
\label{sec:cv-dis-cond}
Let $\mathcal{T}\in \mathbb{T}_f$ be a tree of height larger than $n$, and consider the new tree $\mathcal{T}'$ obtained by adding to the graph $\mathcal{T}$ an edge between the root $\varnothing$ and an extra vertex $\partial$. We define under the probability measure $P^{\mathcal{T}'}$ a simple random walk $X$ on $\mathcal{T}'$ starting from the root $\varnothing$. Let $\tau_{\partial}$ be the first hitting time of $\partial$ by $X$, and for every integer $1\leq i \leq n$, let $\tau_i$ be the first hitting time of generation $i$ (of the tree $\mathcal{T}$) by $X$. We write
$$\mathcal{C}_i(\mathcal{T})\colonequals P^{\mathcal{T}'}(\tau_i<\tau_{\partial}).$$
This notation is justified by the fact that $\mathcal{C}_i(\mathcal{T})$ can be interpreted as the effective conductance between $\partial$ and generation $i$ of $\mathcal{T}$ in the graph $\mathcal{T}'$, see e.g.~\cite[Chapter 2]{LP10}.

Recall the notation that $\mathcal{C}(\Delta)$ stands for the conductance between the root and the set $\partial \Delta$ in the reduced tree $\Delta$. Analogously, for every $\varepsilon \in(0,1)$, $\mathcal{C}(\Delta_{\varepsilon})$ denotes the conductance between the root and the set $\{x\in \Delta\colon H(x)=1-\varepsilon\}$ in $\Delta$. The following result is stated in \cite{CLG13} without a proof. We provide here the details.

\begin{proposition}\label{prop:cv-conductance}
Suppose that the reduced trees $\mathsf{T}^{*n}$ and the tree $\Delta$ are constructed so that the properties stated in Proposition~\ref{prop:conv-reduced-tree} hold. Then
\begin{displaymath}
n\,\mathcal{C}_n(\mathsf{T}^{*n}) \xrightarrow[n\to\infty]{\mathrm{a.s.}}  \mathcal{C}(\Delta).
\end{displaymath}
\end{proposition}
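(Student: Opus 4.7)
My plan is to sandwich $n\mathcal{C}_n(\mathsf{T}^{*n})$ between two finite-topology approximations. The upper approximation grounds the tree at level $n-\lfloor\varepsilon n\rfloor$ (which can only raise the effective conductance from the root), and the lower approximation is a subgraph of $\mathsf{T}^{*n}$ obtained by truncating at the same level and attaching single paths up to level $n$ (which can only lower the effective conductance). The key point is that $n\mathcal{C}_n(\mathsf{T}^{*n})$ is the effective conductance of the rescaled tree $n^{-1}\mathsf{T}^{*n}$ equipped with unit resistance per unit length, and on any \emph{finite} rooted tree with this convention the conductance is a continuous function of the edge lengths (computable by series-parallel reduction). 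Consequently, Proposition~\ref{prop:conv-reduced-tree} will automatically transfer convergence of tree structure to convergence of conductance whenever the underlying topology is finite, and the proof reduces to controlling the truncation error.

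For the upper bound, since in a tree the first hitting time of level $n$ satisfies $\tau_n\geq\tau_{n-\lfloor\varepsilon n\rfloor}$, I get $\mathcal{C}_n(\mathsf{T}^{*n})\leq\mathcal{C}_{n-\lfloor\varepsilon n\rfloor}(\mathsf{T}^{*n})=\mathcal{C}_{n-\lfloor\varepsilon n\rfloor}(R_{\varepsilon n}(\mathsf{T}^{*n}))$, the last equality because $\tau_{n-\lfloor\varepsilon n\rfloor}$ does not depend on what lies above that level. Applying Proposition~\ref{prop:conv-reduced-tree} to the truncated tree $R_{\varepsilon n}(\mathsf{T}^{*n})$, whose topology stabilizes to $\mathrm{Sk}(\Delta_\varepsilon)$ with rescaled edge lengths converging to those of $\Delta_\varepsilon$, continuity of conductance in edge lengths yields $n\mathcal{C}_{n-\lfloor\varepsilon n\rfloor}(R_{\varepsilon n}(\mathsf{T}^{*n}))\to\mathcal{C}(\Delta_\varepsilon)$ a.s.\ as $n\to\infty$.

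For the lower bound I would construct a subgraph $\widehat{\mathsf{T}}^{*n,\varepsilon}$ of $\mathsf{T}^{*n}$ consisting of $R_{\varepsilon n}(\mathsf{T}^{*n})$ together with, at each of its leaves, an arbitrary single path in $\mathsf{T}^{*n}$ from that leaf up to level $n$. Rayleigh's monotonicity gives $\mathcal{C}_n(\widehat{\mathsf{T}}^{*n,\varepsilon})\leq\mathcal{C}_n(\mathsf{T}^{*n})$, and the same continuity-in-edge-lengths argument yields $n\mathcal{C}_n(\widehat{\mathsf{T}}^{*n,\varepsilon})\to\mathcal{C}(\widehat\Delta^\varepsilon)$ a.s., where $\widehat\Delta^\varepsilon$ is the continuous object obtained from $\Delta_\varepsilon$ by attaching at each leaf a single line segment of length $\varepsilon$ extending up to height~$1$. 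Taking $\varepsilon\downarrow 0$ on both sides will give the conclusion, provided one knows $\mathcal{C}(\Delta_\varepsilon)\to\mathcal{C}(\Delta)$ and $\mathcal{C}(\widehat\Delta^\varepsilon)\to\mathcal{C}(\Delta)$ a.s.

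The main obstacle is the pair of continuity estimates for the continuous conductance, which I would handle uniformly via Thomson's principle. Given an optimal unit flow on one of the modified trees, extend it to a unit flow on $\Delta$ by flowing harmonically inside each ``upper subtree'' (in the case of $\Delta_\varepsilon$), or restrict the optimal flow on $\Delta$ to constant flows along the attached single line segments (in the case of $\widehat\Delta^\varepsilon$). In either case, each upper subtree has height at most $\varepsilon$, so its conductance is at least $1/\varepsilon$, and the extra Dirichlet energy introduced is bounded by $\sum_i (f_i^*)^2 h_i\leq\varepsilon\sum_i(f_i^*)^2$, where $h_i\leq\varepsilon$ is the height of the $i$-th upper piece and $f_i^*$ the flow entering it. Since on any cut of a tree a non-negative unit flow satisfies $\sum_i(f_i^*)^2\leq\sum_i f_i^*=1$, this yields
\[
0\leq\frac{1}{\mathcal{C}(\Delta)}-\frac{1}{\mathcal{C}(\Delta_\varepsilon)}\leq\varepsilon \qquad\text{and}\qquad 0\leq\frac{1}{\mathcal{C}(\widehat\Delta^\varepsilon)}-\frac{1}{\mathcal{C}(\Delta)}\leq\varepsilon,
\]
and both one-sided continuities follow, completing the proof.
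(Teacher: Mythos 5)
Your proof is correct, but it takes a genuinely different route from the paper's. The paper uses only the \emph{single} truncation $R_{\varepsilon n}(\mathsf{T}^{*n})$ and relates $\mathcal{C}_n(\mathsf{T}^{*n})$ to $\mathcal{C}_{n-\lfloor\varepsilon n\rfloor}(\mathsf{T}^{*n})$ by a probabilistic argument: applying the strong Markov property at the hitting time of generation $n-\lfloor\varepsilon n\rfloor$, it observes that from any vertex at that level the walk hits generation $n$ before $\partial$ with probability at least $1-\frac{\lfloor\varepsilon n\rfloor}{n+1}$ (a gambler's ruin/submartingale estimate), which yields the two-sided \emph{multiplicative} control $0\leq\mathcal{C}_{n-\lfloor\varepsilon n\rfloor}-\mathcal{C}_n\leq\frac{\lfloor\varepsilon n\rfloor}{n+1}\mathcal{C}_{n-\lfloor\varepsilon n\rfloor}$ and the analogous estimate $0\leq\mathcal{C}(\Delta_\varepsilon)-\mathcal{C}(\Delta)\leq\varepsilon\mathcal{C}(\Delta_\varepsilon)$; a triangle-inequality argument then finishes. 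You instead build a genuine two-sided sandwich: the upper bound is the same truncated-tree comparison, but the lower bound uses a second, explicit subgraph (single paths attached to $R_{\varepsilon n}$) and Rayleigh monotonicity, and the continuous-side error terms are handled by a Thomson's-principle flow-extension argument giving \emph{additive} bounds on the effective resistances, $\lvert 1/\mathcal{C}(\Delta_\varepsilon)-1/\mathcal{C}(\Delta)\rvert\leq\varepsilon$ and $\lvert 1/\mathcal{C}(\widehat\Delta^\varepsilon)-1/\mathcal{C}(\Delta)\rvert\leq\varepsilon$. Both arguments share the same core idea that the conductance of the truncated tree converges by series--parallel continuity with the edge-length convergence of Proposition~\ref{prop:conv-reduced-tree}. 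The paper's route is shorter and avoids constructing the auxiliary subgraph $\widehat{\mathsf{T}}^{*n,\varepsilon}$; yours is somewhat heavier in bookkeeping (two modified trees instead of one) but replaces the hitting-probability estimate with soft monotonicity and variational principles, which is a clean and self-contained alternative. One small implicit step worth flagging: the inequality $\sum_i(f_i^*)^2\leq 1$ relies on the fact that the harmonic unit current on a tree is everywhere directed away from the root, which is indeed automatic on trees but should be noted; with that caveat, your argument is complete.
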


\begin{proof}
By definition, for every $\varepsilon \in (0,1)$,
\begin{displaymath}
\mathcal{C}_{n-\lfloor \varepsilon n\rfloor}(\mathsf{T}^{*n})=P^{(\mathsf{T}^{*n})'}(\tau_{n-\lfloor \varepsilon n\rfloor}<\tau_{\partial}) \geq P^{(\mathsf{T}^{*n})'}(\tau_n<\tau_{\partial})=\mathcal{C}_n(\mathsf{T}^{*n}).
\end{displaymath}
Note that there is probability at least $1-\frac{\lfloor \varepsilon n\rfloor}{n+1}$ that, after hitting the generation $n-\lfloor \varepsilon n\rfloor$, the simple random walk on $(\mathsf{T}^{*n})'$ will hit the generation $n$ before moving down to the extra vertex~$\partial$. Hence it follows from the strong Markov property of simple random walk that
\begin{displaymath}
0\leq  \mathcal{C}_{n-\lfloor \varepsilon n\rfloor}(\mathsf{T}^{*n}) - \mathcal{C}_n(\mathsf{T}^{*n}) \leq  \frac{\lfloor \varepsilon n\rfloor}{n+1} \mathcal{C}_{n-\lfloor \varepsilon n\rfloor}(\mathsf{T}^{*n}).
\end{displaymath}
By similar probabilistic arguments, we also have
\begin{displaymath}
0\leq \mathcal{C}(\Delta_{\varepsilon})- \mathcal{C}(\Delta)\leq \varepsilon\, \mathcal{C}(\Delta_{\varepsilon}),
\end{displaymath}
which entails particularly that $\mathcal{C}(\Delta_{\varepsilon})\leq 2\, \mathcal{C}(\Delta)$ if $\varepsilon< 1/2$.

Let $n$ be sufficiently large so that assertions (i) and (ii) of Proposition~\ref{prop:conv-reduced-tree} hold with $\varepsilon \in (0,\frac{1}{2})$. By calculating the conductances using the series law and parallel law, we see that a.s.
\begin{displaymath}
\lim_{n\to \infty} |n\,\mathcal{C}_{n-\lfloor \varepsilon n\rfloor}(\mathsf{T}^{*n})-\mathcal{C}(\Delta_{\varepsilon})|=0.
\end{displaymath}
Then, it follows from
\begin{align*}
|n\,\mathcal{C}_n(\mathsf{T}^{*n})-\mathcal{C}(\Delta)| & \leq  |n\,\mathcal{C}_n(\mathsf{T}^{*n})-n\,\mathcal{C}_{n-\lfloor \varepsilon n\rfloor}(\mathsf{T}^{*n})|+|n\,\mathcal{C}_{n-\lfloor \varepsilon n\rfloor}(\mathsf{T}^{*n})-\mathcal{C}(\Delta_{\varepsilon})|+|\mathcal{C}(\Delta_{\varepsilon})-\mathcal{C}(\Delta)|\\
& \leq  \lfloor \varepsilon n\rfloor \mathcal{C}_{n-\lfloor \varepsilon n\rfloor}(\mathsf{T}^{*n})+ |n\,\mathcal{C}_{n-\lfloor \varepsilon n\rfloor}(\mathsf{T}^{*n})-\mathcal{C}(\Delta_{\varepsilon})| +\varepsilon\, \mathcal{C}(\Delta_{\varepsilon})
\end{align*}
that
\begin{displaymath}
\limsup _{n \to \infty} |n\,\mathcal{C}_n(\mathsf{T}^{*n})-\mathcal{C}(\Delta)| \leq 2\varepsilon\,\mathcal{C}(\Delta_{\varepsilon}) \leq 4\varepsilon\, \mathcal{C}(\Delta).
\end{displaymath}
Letting $\varepsilon\to 0$, we conclude that $|n\,\mathcal{C}_n(\mathsf{T}^{*n})-\mathcal{C}(\Delta)|\to 0$ as $n\to \infty$.
\end{proof}

Let us write $\mathcal{C}(\widehat \Delta_{\varepsilon})$ for the conductance between the root and the set $\{x\in \widehat \Delta \colon H(x)=1-\varepsilon\}$
in $\widehat \Delta$. By the same reasoning as in the previous proof, for every $\varepsilon\in(0,1/2)$, we have
\begin{equation}
\label{eq:cond-continu-epsilon}
0\leq  \mathcal{C}(\widehat \Delta_{\varepsilon})- \mathcal{C}(\widehat \Delta)\leq 2\varepsilon \, \mathcal{C}(\widehat \Delta).
\end{equation}

\remark We can also show the convergence of the reduced trees $[\widehat{\mathsf{T}}]^{*n}$ to the (continuous) size-biased reduced tree $\widehat \Delta$, in a sense similar to Proposition \ref{prop:conv-reduced-tree}, which implies a size-biased analog of Proposition~\ref{prop:cv-conductance}. In particular, it holds the convergence in law
\begin{displaymath}
n\,\mathcal{C}_n([\widehat{\mathsf{T}}]^n) \xrightarrow[n\to\infty]{\mathrm{(d)}}  \mathcal{C}(\widehat \Delta).
\end{displaymath}
Since these results are not needed in Section \ref{sec:proof-thm1} for proving Theorem~\ref{thm:main-dis}, we omit the proofs. 

\smallskip
For future reference, we state the following result, which is Lemma~22 in \cite{CLG13}.

\begin{lemma}
\label{lem:L2-gw-condct}
There exists a constant $K\geq 1$ such that, for every integer $n\geq 1$,
\begin{displaymath}
\E\Big[\big(n\,\mathcal{C}_n(\mathsf{T}^{*n})\big)^2\Big]\leq K.
\end{displaymath}
\end{lemma}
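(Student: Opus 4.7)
The first move is to pass from the conditioned tree $\mathsf{T}^{(n)}$ to the unconditioned critical Galton--Watson tree $\mathsf{T}\colonequals \mathsf{T}^{(0)}$, extending the definition of the conductance by $\mathcal{C}_n(\mathsf{T})\colonequals 0$ on the extinction event $\{\#\mathsf{T}_n=0\}$. Since
$$\E[g(\mathsf{T}^{(n)})] = q_n^{-1}\,\E[g(\mathsf{T})\,\mathbf{1}_{\{\#\mathsf{T}_n>0\}}]$$
and $q_n\sim 2/(n\sigma^2)$ by \eqref{eq:kolmogorov}, the desired inequality is equivalent to the sharp moment estimate
$$\E[\mathcal{C}_n(\mathsf{T})^2] = O(n^{-3}).$$

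The key structural tool is the recursion at the root obtained from the series and parallel laws: if the root has $N$ children with descendant subtrees $\mathsf{T}^{(1)},\ldots,\mathsf{T}^{(N)}$ that are, conditionally on $N$, i.i.d.\ copies of $\mathsf{T}$, then a short resistance computation gives
$$\mathcal{C}_n(\mathsf{T}) = \frac{S_n}{1+S_n},\qquad S_n\colonequals \sum_{i=1}^{N}\mathcal{C}_{n-1}(\mathsf{T}^{(i)}).$$
Writing $a_n\colonequals \E[\mathcal{C}_n(\mathsf{T})]$ and $b_n\colonequals \E[\mathcal{C}_n(\mathsf{T})^2]$, and using $\E[N]=1$ together with $\E[N(N-1)]=\sigma^2$, I read off $\E[S_n]=a_{n-1}$ and $\E[S_n^2]=b_{n-1}+\sigma^2 a_{n-1}^2$.

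Next I would Taylor-expand $x/(1+x) = x - x^2/(1+x)$ and the analogous formula for $(x/(1+x))^2$, and control the remainders using the uniform pointwise bound $\mathcal{C}_n(\mathsf{T})\leq 1$, to extract coupled recursive inequalities of the schematic form
$$a_{n-1}-a_n \;\geq\; c_1 b_{n-1} + c_1\sigma^2 a_{n-1}^2 - R_n^{(1)},\qquad b_n \;\leq\; b_{n-1} + \sigma^2 a_{n-1}^2 - R_n^{(2)},$$
where $R_n^{(i)}\geq 0$ involves only higher-order moments of $S_n$ multiplied by a factor uniformly $\leq 1$. From these one then establishes $a_n=O(n^{-2})$ and $b_n=O(n^{-3})$ \emph{simultaneously} by a bootstrap induction on $n$; multiplying the final bound on $b_n$ by $q_n^{-1}\sim n\sigma^2/2$ yields $\E[(n\mathcal{C}_n(\mathsf{T}^{*n}))^2]\leq K$.

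The principal obstacle is precisely this bootstrap. A straight application of Jensen's inequality to the concave map $x\mapsto x/(1+x)$ gives only $a_n\leq 1/n$, which is one full power of $n$ too weak. The missing power has to come from the feedback between the two recursions: setting $r_n\colonequals 1/a_n$, one finds $r_n-r_{n-1}\approx (a_{n-1}-a_n)/a_n^2\gtrsim b_{n-1}/a_n^2 + \sigma^2$, so that a bound $b_{n-1}=O(n^{-3})$ is exactly what is needed to drive $r_n$ to grow quadratically rather than linearly, which then feeds back to sharpen the $a_n$-bound and close the induction. The delicate points I expect to have to handle carefully are verifying that the Taylor remainders $R_n^{(i)}$ are genuinely of lower order (this uses nothing beyond $\mathcal{C}_n\leq 1$ and the inductive bounds on $a_n$), and choosing the inductive constants large enough to absorb the loss at the base case.
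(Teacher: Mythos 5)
The paper does not prove this lemma; it quotes it verbatim as Lemma~22 of Curien and Le Gall \cite{CLG13}, so there is no in-paper argument to compare against, and your proposal must be judged on its own.

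The skeleton is sound: the unconditioning to $\E[\mathcal{C}_n(\mathsf{T})^2]=O(n^{-3})$, the series--parallel recursion $\mathcal{C}_n(\mathsf{T})=S_n/(1+S_n)$, the moment identities $\E[S_n]=a_{n-1}$ and $\E[S_n^2]=b_{n-1}+\sigma^2 a_{n-1}^2$, and the target orders $a_n=O(n^{-2})$, $b_n=O(n^{-3})$ are all correct. But the bootstrap as you describe it does not close. In the inequality $a_{n-1}-a_n\geq c_1 b_{n-1}+c_1\sigma^2 a_{n-1}^2-R_n^{(1)}$, feeding in an inductive \emph{upper} bound $b_{n-1}=O(n^{-3})$ gives an upper bound on the right side and hence nothing about $a_n$; what you need is a \emph{lower} bound $a_{n-1}-a_n\gtrsim n\,a_{n-1}^2$, and you have not indicated where it comes from. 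Moreover the remainder $R_n^{(1)}=\E[S_n^3/(1+S_n)]$ is not ``of lower order'' by the pointwise bound $1/(1+S_n)\leq1$: the estimate $S_n^3/(1+S_n)\leq S_n^2$ is tight when $S_n$ is large, so without additional structure the remainder can swallow all of $\E[S_n^2]$, and in any case $\E[S_n^3]$ itself involves $\E[N^3]$, which the hypotheses do not control.

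The missing idea is that the full extra power of $n$ comes from the extinction/survival dichotomy, not from the second-moment recursion. Since $\mathcal{C}_{n-1}(\mathsf{T})$ vanishes outside the survival event of probability $q_{n-1}\sim 2/(\sigma^2 n)$, Cauchy--Schwarz gives $b_{n-1}\geq a_{n-1}^2/q_{n-1}$. Even more directly, condition on the number $N^*$ of children of the root whose subtree survives $n-1$ generations: each such child contributes a conditional conductance with mean $\alpha\colonequals a_{n-1}/q_{n-1}\leq 1$, so $\E[S_n\mid N^*]=N^*\alpha$, and applying Jensen for the convex map $x\mapsto x^2/(1+x)$ together with $\alpha\leq 1$ and $m^2/(1+m)\geq m/2$ for $m\geq 1$ yields
\[
a_{n-1}-a_n \;=\; \E\!\left[\frac{S_n^2}{1+S_n}\right] \;\geq\; \E\!\left[\frac{(N^*\alpha)^2}{1+N^*\alpha}\right] \;\geq\; \frac{\alpha^2}{2}\,\E[N^*] \;=\; \frac{a_{n-1}^2}{2\,q_{n-1}} \;\gtrsim\; n\,a_{n-1}^2.
\]
This is exactly the inequality that forces $1/a_n$ to grow quadratically, and it uses $q_{n-1}$, not an inductive bound on $b_{n-1}$. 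Once $a_n=O(n^{-2})$ is in hand, the second moment follows without any recursion at all: since $x^2/(1+x)^2\leq x^2/(1+x)$, one has $b_n\leq a_{n-1}-a_n$; the sequence $(b_m)$ is non-increasing because $\mathcal{C}_m(\mathsf{T})$ is, so $n\,b_{2n}\leq\sum_{m=n+1}^{2n}b_m\leq a_n=O(n^{-2})$, whence $b_n=O(n^{-3})$. Your recursion (B) for $b_n$ is therefore a red herring, and the ``coupled'' bootstrap is better replaced by two decoupled estimates, the one for $a_n$ being the only one that requires care.
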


\subsection{Backward size-biased Galton--Watson tree}
\label{sec:backward-size-biased}

We introduce in this section a new infinite random tree $\widecheck{\mathsf{T}}$, which originates from the inflated Galton--Watson tree constructed by Peres and Zeitouni in~\cite{PZ08}. It will be clear from the following descriptions that $\widecheck{\mathsf{T}}$ is a rear-view variant of the size-biased Galton--Watson tree $\widehat{\mathsf{T}}$.

First, the random tree $\widecheck{\mathsf{T}}$ has a unique infinite ray of vertices $(\mathbf{u}_0,\mathbf{u}_1,\mathbf{u}_2,\ldots)$, which will be referred to as its spine. We declare that, for every $n\geq 0$, the vertex $\mathbf{u}_n$ is at generation $-n$. This gives a genealogical order on the spine of $\widecheck{\mathsf{T}}$: $\mathbf{u}_{1}$ is viewed as the parent of $\mathbf{u}_0$, $\mathbf{u}_2$ is viewed as the parent of $\mathbf{u}_1$, and so on.

Next, let us describe the finite subtrees in $\widecheck{\mathsf{T}}$ branching off every node of the spine. To this end, we recall that $\widehat N$ follows the size-biased distribution of $\theta$, and we denote by $\mathsf{L}$ a random variable which, conditionally on $\widehat N$, is uniformly distributed on the set $\{0,1,\ldots,\widehat N-1\}$. Let $(\mathsf{L}_n,\widehat N_n)_{n\geq 1}$ be a sequence of i.i.d.~copies of $(\mathsf{L},\widehat N)$, and set $\mathsf{R}_n=\widehat N_n-\mathsf{L}_n-1$ for every $n\geq 1$. To every vertex $\mathbf{u}_n$ we give a number $\mathsf{L}_n$ of children to the left of the spine and a number $\mathsf{R}_n$ of children to the right of the spine. Each of these children (there are $\widehat N_n-1$ in total) will independently have an ordinary $\theta$-Galton--Watson descendant tree (later we will say that these Galton--Watson trees are grafted at $\mathbf{u}_n$), and we also assume the independence of these Galton--Watson trees among all $n\geq 1$. This finishes the construction of $\widecheck{\mathsf{T}}$. See Figure~\ref{figure:backgwbias} for an illustration. We remark that $\widecheck{\mathsf{T}}$ is not a tree in the sense of Section~\ref{sec:notation-discrete-tree}. However, due to its obvious tree structure, we will call $\widecheck{\mathsf{T}}$ the backward size-biased Galton--Watson tree.

\begin{figure}[!h]
\begin{center}
\includegraphics[width=12cm]{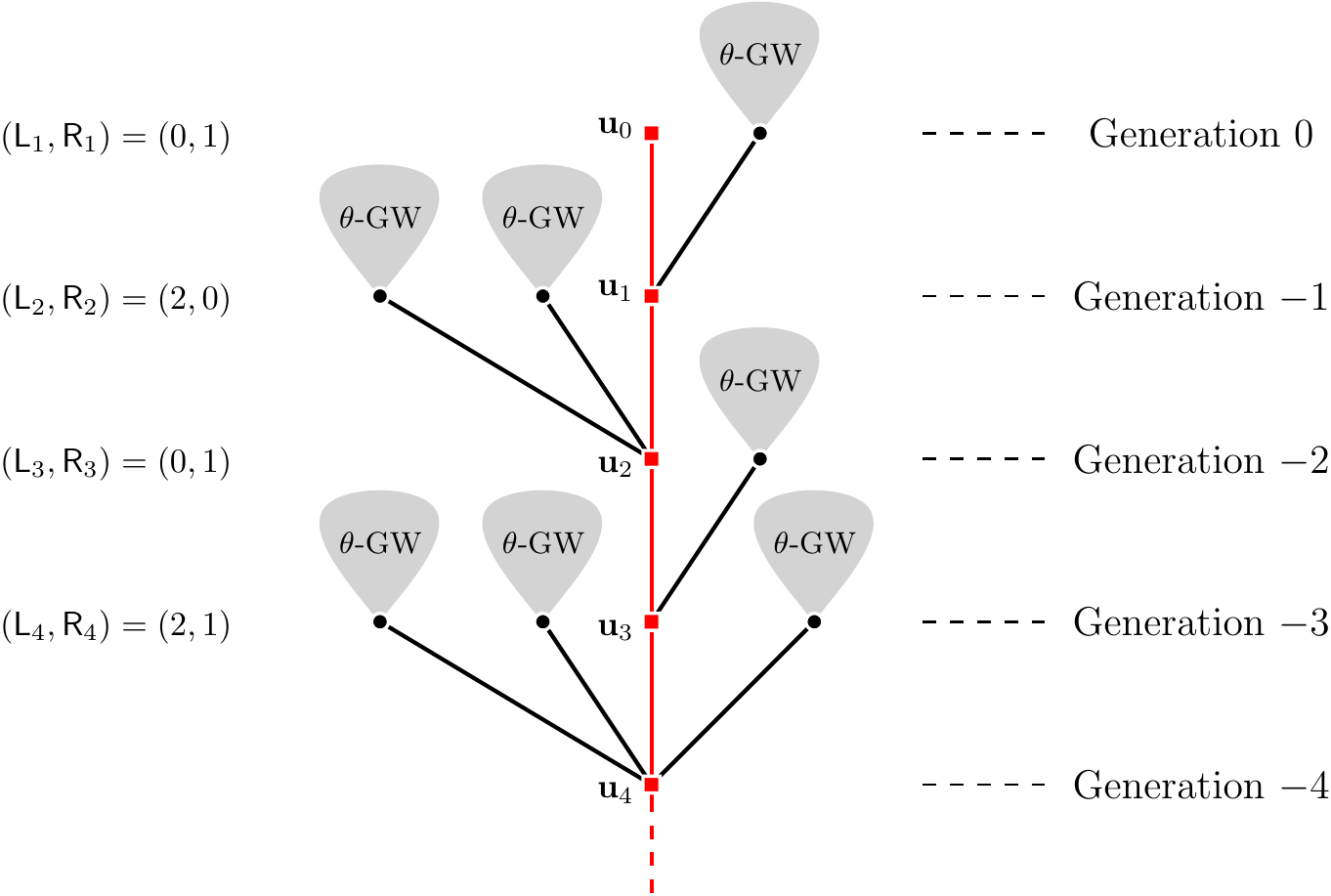}
\caption{Schematic representation of the backward size-biased Galton--Watson tree~$\check{\mathsf{T}}$ \label{figure:backgwbias}}
\end{center}
\end{figure}

The genealogical (partial) order on $\widecheck{\mathsf{T}}$ is defined in the following way. We simply keep the genealogical orders inherited from the grafted Galton--Watson trees and combine them with the genealogical order on the spine. For instance, $\mathbf{u}_2$ is an ancestor of any vertex in the subtrees grafted at $\mathbf{u}_1$. We can also define in a consistent manner the notion of generation for every vertex in $\widecheck{\mathsf{T}}$. In fact, for any vertex $v$ not on the spine, there is a unique vertex $\mathbf{u}_{m}$ on the spine such that $v$ belongs to a finite subtree grafted at $\mathbf{u}_{m}$, then we say that the generation of $v$ in $\widecheck{\mathsf{T}}$ is equal to $-m+1$ plus the initial generation of $v$ inside the corresponding grafted plane tree.

The vertex $\mathbf{u}_0$ in $\widecheck{\mathsf{T}}$ corresponds to the root in the inflated Galton--Watson tree (abbreviated as \emph{IGW}) described in Peres and Zeitouni~\cite{PZ08}. It is worth pointing out that, while an independent number of offspring of distribution $\theta$ is assigned the root of \emph{IGW}, the vertex $\mathbf{u}_0$ in $\widecheck{\mathsf{T}}$ has no descendants. Besides, our notion of generation on $\widecheck{\mathsf{T}}$ can also be called as the horocycle distance from $\mathbf{u}_0$, if we follow the terminology in~\cite{PZ08}.

For every $n\geq 1$, let $[\widecheck{\mathsf{T}}]^{n}$ be the plane tree obtained from $\widecheck{\mathsf{T}}$ by only keeping the finite tree above the vertex $\mathbf{u}_n$. We take $\mathbf{u}_n$ as the root of $[\widecheck{\mathsf{T}}]^{n}$, and the lexicographical order on the set of vertices of $[\widecheck{\mathsf{T}}]^{n}$ corresponds to the order of visit when one ``moves around'' the finite tree $[\widecheck{\mathsf{T}}]^{n}$ in clockwise order, starting from the root $\mathbf{u}_n$. A key observation is that, viewed as a random plane tree, $[\widecheck{\mathsf{T}}]^{n}$ has the same distribution as the random tree $[\widehat{\mathsf{T}}]^{n}$ defined in Section~\ref{sec:notation-discrete-tree}. Moreover, the root $\mathbf{u}_n$ of $[\widecheck{\mathsf{T}}]^{n}$ corresponds to the root $\varnothing$ of $[\widehat{\mathsf{T}}]^{n}$, and the vertex $\mathbf{u}_0$ in $[\widecheck{\mathsf{T}}]^{n}$ corresponds to the vertex $\mathbf{v}_n$ in $[\widehat{\mathsf{T}}]^{n}$.

\section{Proof of Theorem~\ref{thm:main-dis}}
\label{sec:proof-thm1}

Before we start the proof of Theorem~\ref{thm:main-dis}, let us emphasize that Theorem~\ref{thm:main-dis} is not a straightforward consequence of Theorem~\ref{thm:main-continu}. In fact, we will not directly use the results \eqref{eq:loc-dim-harm} and \eqref{eq:loc-dim-unif} obtained in the continuous setting. 

Recall that $\lambda=\E[\widehat{\mathcal{C}}]-1$ is the constant greater than 1 that appears in the convergence~(\ref{eq:loc-dim-harm}). Let $\delta>0$. By applying Proposition~\ref{prop:size-bias-palm} to the indicator function
\begin{displaymath}
F(\mathcal{T},v)=\mathbf{1}\{n^{-\lambda-\delta}\leq P^{\mathcal{T}}(X_{\tau_n}=v)\leq n^{-\lambda+\delta}\}^{c}, \qquad \mbox{for }\mathcal{T}\in \mathbb{T}_f \mbox{ and } v\in \mathcal{T},
\end{displaymath}
we see that
\begin{displaymath}
\E \Bigg[\sum\limits_{v\in \mathsf{T}_n^{(n)}}^{} \!\! \mathbf{1} \{n^{-\lambda-\delta}\leq P^{\mathsf{T}^{(n)}}(X_{\tau_n}=v)\leq n^{-\lambda+\delta}\}^{c} \Bigg] = \frac{1}{q_n} \P\Big( \{n^{-\lambda-\delta}\leq P^{[\widehat{\mathsf{T}}]^{(n)}}(X_{\tau_n}=\mathbf{v}_n)\leq n^{-\lambda+\delta}\}^{c} \Big).
\end{displaymath}
Notice that in the left-hand side of the last display, $P^{\mathsf{T}^{(n)}}(X_{\tau_n}=v)$ is by definition the harmonic measure $\mu_n(v)$ at vertex $v$. By virtue of (\ref{eq:11}) and (\ref{eq:kolmogorov}), the proof of convergence (\ref{eq:thm-main}) is thus reduced to showing that for every $\delta>0$,
\begin{equation}
\label{eq:2}
\lim_{n\to \infty}  \P\Big(n^{-\lambda-\delta}\leq P^{[\widehat{\mathsf{T}}]^{(n)}}(X_{\tau_n}=\mathbf{v}_n)\leq n^{-\lambda+\delta} \Big) =1.
\end{equation}

Since the hitting distribution of generation $n$ is the same for simple random walk on $[\widehat{\mathsf{T}}]^{(n)}$ and on its reduced tree $[\widehat{\mathsf{T}}]^{*n}$, we have the equality
\begin{equation*}
P^{[\widehat{\mathsf{T}}]^{(n)}}(X_{\tau_n}=\mathbf{v}_n) =P^{[\widehat{\mathsf{T}}]^{*n}}(X_{\tau_n}=\mathbf{v}_n)=P^{[\widehat{\mathsf{T}}]^{n}}(X_{\tau_n}=\mathbf{v}_n)
\end{equation*}
under the probability measure $\P$. Furthermore, according to the final remark in Section~\ref{sec:backward-size-biased}, the (random) probability $P^{[\widehat{\mathsf{T}}]^{n}}(X_{\tau_n}=\mathbf{v}_n)$ is distributed under $\P$ as
$$P^{[\check{\mathsf{T}}]^n}(X_{\tau_n}=\mathbf{u}_0),$$
by which we mean the probability that a simple random walk on $[\widecheck{\mathsf{T}}]^n$ starting from the root $\mathbf{u}_n$ hits level $n$ (of $[\widecheck{\mathsf{T}}]^n$) for the first time at $\mathbf{u}_0$. So the convergence (\ref{eq:2}) is equivalent to
\begin{equation}
\label{eq:3}
\lim_{n\to \infty}  \P\Big(n^{-\lambda-\delta}\leq P^{[\check{\mathsf{T}}]^n}(X_{\tau_n}=\mathbf{u}_0)\leq n^{-\lambda+\delta} \Big) =1.
\end{equation}

In order to show the latter convergence, we denote by $-M_1,-M_2,\ldots$ the generations of the vertices on the spine of $\widecheck{\mathsf{T}}$ where there is (at least) one grafted plane tree that reaches generation 0, i.e.~has a descendant of generation 0. This sequence of negative integers $(-M_k)_{k\geq 1}$ is listed in the strict decreasing order, and we set by convention $M_0=0$. For every $k\geq 1$, we also set $L_k \colonequals M_k-M_{k-1}\geq 1$.

\smallskip
For every $n\geq 1$, let $k_n \colonequals k_n(\widecheck{\mathsf{T}})$ be the index such that $M_{k_n}\leq n< M_{k_n+1}$.
\begin{lemma}
\label{lem:kn-asymptotic}
We have $\P$-a.s.
\begin{displaymath}
\lim_{n\to\infty} \frac{k_n}{2\log n} = 1.
\end{displaymath}
\end{lemma}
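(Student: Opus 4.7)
The plan is to express $k_n$ as a sum of independent Bernoulli variables and then invoke a Kolmogorov-type strong law. For each integer $m\geq 1$, let $A_m$ be the indicator of the event that the spine vertex $\mathbf{u}_m$ carries at least one grafted $\theta$-Galton--Watson subtree reaching generation $0$ of $\widecheck{\mathsf{T}}$. By construction of $\widecheck{\mathsf{T}}$, the triples $(\widehat N_m,\mathsf{L}_m,\text{grafted trees at }\mathbf{u}_m)$ are independent across $m\geq 1$, so the $A_m$ are independent. From the definitions of $(M_k)$ and of $k_n$, one immediately has $k_n=\sum_{m=1}^n A_m$.

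The next step is to estimate $p_m\colonequals\P(A_m=1)$. A grafted tree at $\mathbf{u}_m$ has its root at generation $-m+1$ of $\widecheck{\mathsf{T}}$, so it reaches generation $0$ iff it contains a descendant at its own depth $m-1$, an event of probability $q_{m-1}$. There are $\widehat N_m-1$ such independent trees grafted at $\mathbf{u}_m$, whence
\[
p_m=\E\bigl[\,1-(1-q_{m-1})^{\widehat N_m-1}\,\bigr].
\]
The elementary inequality $1-(1-q)^k\leq kq$ together with $\E[\widehat N-1]=\E[N^2]-1=\sigma^2<\infty$ (so $\widehat N-1$ is an integrable dominating function) allow one to apply dominated convergence to conclude that $p_m/q_{m-1}\to \sigma^2$ as $m\to\infty$. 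Combining this with Kolmogorov's asymptotic $q_{m-1}\sim 2/(m\sigma^2)$ from~\eqref{eq:kolmogorov} yields $p_m\sim 2/m$, and hence
\[
\E[k_n]=\sum_{m=1}^n p_m=2\log n+O(1).
\]

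It remains to control the fluctuations of $k_n$ around its mean. Since $\mathrm{Var}(A_m)=p_m(1-p_m)\leq p_m=O(1/m)$, the series $\sum_{m\geq 2}\mathrm{Var}(A_m)/(\log m)^2$ converges, so Kolmogorov's SLLN for independent variables combined with Kronecker's lemma (applied to the centered sequence $A_m-p_m$ with normalization $\log m$) gives
\[
\frac{1}{\log n}\sum_{m=1}^n (A_m-p_m)\xrightarrow[n\to\infty]{\mathrm{a.s.}}0.
\]
Together with $\E[k_n]\sim 2\log n$ this yields $k_n/(2\log n)\to 1$ almost surely. The only delicate point is the dominated convergence step establishing $p_m\sim 2/m$, which nevertheless goes through using only $\sigma^2<\infty$, in line with the paper's standing hypothesis on $\theta$.
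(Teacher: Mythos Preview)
Your proof is correct and follows essentially the same approach as the paper: write $k_n$ as a sum of independent Bernoulli indicators, show each has success probability asymptotic to $2/m$, and deduce the almost sure convergence from a second-moment argument. The only differences are cosmetic: the paper obtains $p_m\sim 2/m$ via a Taylor expansion of the generating function rather than your dominated convergence argument, and it upgrades $L^2$ convergence to a.s.\ convergence by a monotonicity/Borel--Cantelli subsequence trick rather than your Kolmogorov--Kronecker route. One small overstatement: from $p_m\sim 2/m$ alone you only get $\E[k_n]\sim 2\log n$, not $2\log n+O(1)$, but the weaker asymptotic is all your argument actually uses.
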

\begin{proof}
Recall that for every $j\geq 1$, there are $\widehat N_j-1$ independent Galton--Watson trees grafted at $\mathbf{u}_j$ in $\widecheck{\mathsf{T}}$. Consider the event that at least one of those plane trees grafted at $\mathbf{u}_j$ reaches generation 0, and let $\epsilon_j$ be the corresponding indicator function. Then,
\begin{displaymath}
\P(\epsilon_j=0)=\E\Big[(1-q_{j-1})^{\widehat N_j-1}\Big]=\E\Big[(1-q_{j-1})^{\widehat N-1}\Big].
\end{displaymath}
Let $g_{\theta}$ be the generating function of $\theta$, i.e.
\begin{displaymath}
g_{\theta}(r) \colonequals \sum\limits_{k\geq 0}^{} \theta(k)r^k\,, \qquad 0\leq r\leq 1.
\end{displaymath}
Since $\theta$ has a finite variance $\sigma^2$,
\begin{displaymath}
g_{\theta}(1-s)=1-s+\frac{\sigma^2}{2}s^2 + o(s^2) \qquad \mbox{ as } s\to 0.
\end{displaymath}
As the mean of $\widehat N-1$ is $\sigma^2$, we have $\E\big[(1-s)^{\widehat N-1}\big]=1-\sigma^2 s + o(s)$ as $s\to 0$, which, together with (\ref{eq:kolmogorov}), yields that
\begin{equation}
\label{eq:epsilon-estimate}
\P(\epsilon_j=0)= 1-\frac{2}{j}+o(j^{-1}) \qquad \mbox{ as } j\to \infty.
\end{equation}
Notice that by definition, $k_n=\epsilon_1+\epsilon_2+\cdots+\epsilon_n$. Hence,
\begin{displaymath}
\E[k_n]=\sum\limits_{j=1}^n \big(1-\P(\epsilon_j=0)\big)\, \sim \, 2\log n \qquad \mbox{as } n\to \infty.
\end{displaymath}
Since $\epsilon_1, \ldots,\epsilon_n$ are independent, we also have $\mathrm{var}(k_n)=O(\log n)$, and the $L^2$-convergence of $k_n/(2\log n)$ follows immediately. The a.s.~convergence is then obtained by standard monotonicity and Borel--Cantelli arguments.
\end{proof}

We introduce some additional notation before stating the next proposition. For every $j \geq 0$, we write $P^{\check{\mathsf{T}}}_j$ for the (quenched) probability measure under which we consider a simple random walk $X=(X_k)_{k\geq 0}$ on $\widecheck{\mathsf{T}}$ starting from the vertex $\mathbf{u}_j$. Under $P^{\check{\mathsf{T}}}_j$, we denote by $S_0$ the hitting time of generation 0 by the simple random walk $X$, and for every $i \geq 0$, $\Pi_i \colonequals \inf\{k\geq 0\colon X_k=\mathbf{u}_i\}$ denotes the hitting time of vertex $\mathbf{u}_i$.

\begin{proposition}
\label{prop:red-to-backward}
For every $\delta>0$, there exists an integer $n_0\in \N$ such that for every $n\geq n_0$, we have
\begin{displaymath}
\P\Big(P^{[\check{\mathsf{T}}]^n}(X_{\tau_n}=\mathbf{u}_0)\geq n^{-\lambda+\delta}\Big) \leq 8\, \P\Big(P^{\check{\mathsf{T}}}_{M_{k_n}}(X_{S_0}=\mathbf{u}_0, S_0<\Pi_{M_{k_n+1}})\geq n^{-\lambda+\delta}/2 \Big),
\end{displaymath}
and
\begin{displaymath}
\P\Big(P^{[\check{\mathsf{T}}]^n}(X_{\tau_n}=\mathbf{u}_0)\leq n^{-\lambda-\delta}\Big) \leq 8\, \P\Big(P^{\check{\mathsf{T}}}_{M_{k_n}}(X_{S_0}=\mathbf{u}_0, S_0<\Pi_{M_{k_n+1}})\leq n^{-\lambda-\delta} \Big).
\end{displaymath}
\end{proposition}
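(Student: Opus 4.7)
The plan is to reduce both sides to a common excursion decomposition at the vertex $\mathbf{u}_{M_{k_n}}$, and to conclude via a conditional-independence argument between the ``lower'' and ``upper'' portions of $\widecheck{\mathsf{T}}$.

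First, by the definition of $k_n$, no grafted subtree rooted at a vertex $\mathbf{u}_j$ with $M_{k_n}<j\leq n$ reaches generation~$0$, so any path in $[\widecheck{\mathsf{T}}]^n$ from $\mathbf{u}_n$ down to generation~$0$ must visit $\mathbf{u}_{M_{k_n}}$. Applying the strong Markov property at the first hitting time of $\mathbf{u}_{M_{k_n}}$ yields
$$
p_1 \;:=\; P^{[\widecheck{\mathsf{T}}]^n}(X_{\tau_n}=\mathbf{u}_0) \;=\; P^{[\widecheck{\mathsf{T}}]^n}_{\mathbf{u}_{M_{k_n}}}(X_{\tau_n}=\mathbf{u}_0).
$$
Next, I would decompose the walk started at $\mathbf{u}_{M_{k_n}}$ into successive excursions away from $\mathbf{u}_{M_{k_n}}$ and set
\begin{itemize}
\item $q$: the probability that one such excursion reaches $\mathbf{u}_0$ (as its first generation-$0$ vertex) before returning to $\mathbf{u}_{M_{k_n}}$;
\item $r$: the probability that one such excursion reaches a generation-$0$ vertex other than $\mathbf{u}_0$ first;
\item $\tilde t$: the probability that one such excursion in $\widecheck{\mathsf{T}}$ reaches $\mathbf{u}_{M_{k_n+1}}$ before returning to $\mathbf{u}_{M_{k_n}}$ or hitting generation~$0$.
\end{itemize}
Since any excursion reaching generation~$0$ necessarily stays weakly below $\mathbf{u}_{M_{k_n}}$, the values of $q$ and $r$ are the same whether computed in $[\widecheck{\mathsf{T}}]^n$ or in $\widecheck{\mathsf{T}}$. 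In $[\widecheck{\mathsf{T}}]^n$ every upward excursion is forced back to $\mathbf{u}_{M_{k_n}}$, so summing the geometric series over failed excursions yields $p_1=q/(q+r)$. In $\widecheck{\mathsf{T}}$ on the event $\{S_0<\Pi_{M_{k_n+1}}\}$, the extra alternative $\tilde t$ enters the absorption, giving
$$
p_2 \;:=\; P^{\widecheck{\mathsf{T}}}_{M_{k_n}}\!\bigl(X_{S_0}=\mathbf{u}_0,\,S_0<\Pi_{M_{k_n+1}}\bigr) \;=\; \frac{q}{q+r+\tilde t}.
$$

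The second inequality of the proposition is then immediate: $p_1\geq p_2$ yields $\{p_1\leq n^{-\lambda-\delta}\}\subseteq\{p_2\leq n^{-\lambda-\delta}\}$, which is even stronger than the stated bound. For the first inequality, set $x=n^{-\lambda+\delta}$ and observe that on the event $\{p_1\geq x\}$ one has $2q/x-(q+r)\geq q+r$; in particular
$$
\mathbf 1_{\{p_1\geq x\}}\cdot\mathbf 1_{\{\tilde t\leq q+r\}} \;\leq\; \mathbf 1_{\{p_1\geq x\}}\cdot\mathbf 1_{\{p_2\geq x/2\}}.
$$
Taking expectations, and using that conditionally on $M_{k_n}$, $M_{k_n+1}$ and the offspring structure at $\mathbf{u}_{M_{k_n}}$ the pair $(q,r)$ (a function of the lower portion of $\widecheck{\mathsf{T}}$) is independent of $\tilde t$ (a function of the disjoint upper segment between $\mathbf{u}_{M_{k_n}}$ and $\mathbf{u}_{M_{k_n+1}}$), one obtains
$$
\P\bigl(p_2\geq x/2\bigr) \;\geq\; \E\!\left[\mathbf 1_{\{p_1\geq x\}}\,\P\!\bigl(\tilde t\leq q+r\,\big|\,q,r,M_{k_n},M_{k_n+1}\bigr)\right].
$$

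The hard part will be to prove, for $n$ large and almost surely on $\{p_1\geq x\}$, the lower bound
$$
\P\!\bigl(\tilde t\leq q+r\,\big|\,q,r,M_{k_n},M_{k_n+1}\bigr) \;\geq\; \tfrac 1 8,
$$
for then the previous display upgrades to $\P(p_2\geq x/2)\geq \tfrac 1 8\,\P(p_1\geq x)$, which is exactly the first inequality of the proposition. Heuristically, $q+r$ and $\tilde t$ are both escape probabilities at $\mathbf{u}_{M_{k_n}}$ of the same polynomial order, governed by the inverses of the discrete conductances of subtrees of heights $M_{k_n}$ and $M_{k_n+1}-M_{k_n}$ respectively (cf.~Proposition~\ref{prop:cv-conductance}); because these two heights are comparable for typical realizations (cf.~Lemma~\ref{lem:kn-asymptotic} and the law of $L_{k_n+1}$), the ratio $\tilde t/(q+r)$ has a proper limit distribution and in particular lies in $[0,1]$ with uniformly positive probability. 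Turning this heuristic into an explicit almost-sure bound uniform in the conditioning is the main technical obstacle.
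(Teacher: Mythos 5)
Your algebraic reduction is sound and matches the paper in spirit: writing $p_1 = q/(q+r)$ and $p_2 = q/(q+r+\tilde t)$ via excursions from $\mathbf{u}_{M_{k_n}}$ is equivalent, after the strong Markov property, to the paper's identity
\[
P^{\check{\mathsf{T}}}_n(X_{S_0}=\mathbf{u}_0 \mid S_0< \Pi_{M_{k_n+1}}) = \frac{P^{\check{\mathsf{T}}}_n(\Pi_{M_{k_n}}<\Pi_{M_{k_n+1}})}{P^{\check{\mathsf{T}}}_n(S_0< \Pi_{M_{k_n+1}})}\, P^{\check{\mathsf{T}}}_{M_{k_n}}(X_{S_0}=\mathbf{u}_0, S_0<\Pi_{M_{k_n+1}}),
\]
and your observation that $p_1\geq p_2$ disposes of the second inequality. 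Up to this point you have essentially reconstructed the paper's argument.

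The gap is in the plan for the first inequality, and it is not merely a matter of "turning a heuristic into an estimate": as formulated, the step cannot work. You ask for an almost-sure lower bound on $\P(\tilde t\leq q+r \mid q,r,M_{k_n},M_{k_n+1})$. But once you condition on $M_{k_n+1}$ (and, implicitly, on the offspring structure at $\mathbf{u}_{M_{k_n}}$), the quantity $\tilde t$ is \emph{deterministic}: the grafted subtrees strictly between $\mathbf{u}_{M_{k_n}}$ and $\mathbf{u}_{M_{k_n+1}}$ are dead ends for the race to $\mathbf{u}_{M_{k_n+1}}$, so $\tilde t = \big(\deg(\mathbf{u}_{M_{k_n}})\cdot(M_{k_n+1}-M_{k_n})\big)^{-1}$ exactly. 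Thus the conditional probability you want to bound is essentially a $\{0,1\}$-valued indicator, and it vanishes on the event $\{q+r < \tilde t\}$, which has positive $\P$-probability. No almost-sure lower bound $\geq 1/8$ is possible. The appeal to Proposition~\ref{prop:cv-conductance} and to limit laws of conductances, besides being irrelevant to a pathwise bound, also signals that the argument is heading in an unnecessarily heavy direction.

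The paper's proof avoids this by \emph{not} conditioning on $M_{k_n+1}$. Instead it introduces the event $\{M_{k_n+1}>2n\}$, which is a function of the $\epsilon_j$ for $j>n$ and is therefore independent of the tree above $\mathbf{u}_n$, hence of $p_1$ (and of $q,r$). From~(\ref{eq:epsilon-estimate}), $\P(M_{k_n+1}>2n)=\prod_{j=1}^n\P(\epsilon_{n+j}=0)\to 1/4$, so it is $\geq 1/8$ for $n$ large. On $\{M_{k_n+1}>2n\}$, an elementary resistance comparison from $\mathbf{u}_n$ — the path $\mathbf{u}_n,\ldots,\mathbf{u}_0$ to generation~$0$ has length $n$, while the resistance down to $\mathbf{u}_{M_{k_n+1}}$ equals $M_{k_n+1}-n>n$ — gives $P^{\check{\mathsf{T}}}_n(S_0<\Pi_{M_{k_n+1}})\geq 1/2$, hence $p_1\leq 2p_2$ pathwise on that event. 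Multiplying the two yields the factor $8$. The moral is that the favorable event must be chosen independent of the conductance data $(q,r)$ rather than conditioned on $M_{k_n+1}$; if you wanted to stay in your excursion language, replacing your conditioning by the deterministic implication $\{M_{k_n+1}>2n\}\subseteq\{M_{k_n+1}\geq 2M_{k_n}\}\subseteq\{\tilde t\leq q+r\}$ (the last inclusion because $q+r\geq 1/(\deg\cdot M_{k_n})$ via the spine path) would repair the argument.
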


\begin{proof}
We keep the notation used in the proof of Lemma~\ref{lem:kn-asymptotic}. Observe that
\begin{displaymath}
\P(M_{k_n+1}-n> n ) = \P(\epsilon_{n+j}=0 \mbox{ for all } 1\leq j\leq n) =\prod_{j=1}^n \P(\epsilon_{n+j}=0) 
\end{displaymath}
converges to $1/4$ as $n\to \infty$ by (\ref{eq:epsilon-estimate}). One can thus find an integer $n_0$ such that for every $n\geq n_0$,
\begin{equation}
\label{eq:1/8-inequality}
\P(M_{k_n+1}>2n )\geq \frac{1}{8}.
\end{equation}
Since $M_{k_n+1}-n$ is independent of the finite tree above the vertex $\mathbf{u}_n$ in $\widecheck{\mathsf{T}}$,
\begin{equation}
\label{eq:prod-ind}
\P\big(M_{k_n+1}>2n, P^{[\check{\mathsf{T}}]^{n}}(X_{\tau_n}=\mathbf{u}_0)\geq n^{-\lambda+\delta}\big) =\P(M_{k_n+1}>2n)\times \P\big(P^{[\check{\mathsf{T}}]^n}(X_{\tau_n}=\mathbf{u}_0)\geq n^{-\lambda+\delta}\big).
\end{equation}

On the other hand, it is crucial to notice that under the probability measure $\P$, the probability $P^{[\check{\mathsf{T}}]^n}(X_{\tau_n}=\mathbf{u}_0)$ has the same distribution as the conditional probability
$$P^{\check{\mathsf{T}}}_n(X_{S_0}=\mathbf{u}_0\!\mid\! S_0< \Pi_{M_{k_n+1}}),$$
which can be calculated as
\begin{eqnarray*}
P^{\check{\mathsf{T}}}_n(X_{S_0}=\mathbf{u}_0\!\mid\! S_0< \Pi_{M_{k_n+1}})\! & =& \!\frac{P^{\check{\mathsf{T}}}_n(X_{S_0}=\mathbf{u}_0, S_0< \Pi_{M_{k_n+1}})}{P^{\check{\mathsf{T}}}_n(S_0< \Pi_{M_{k_n+1}})}\\
&=& \!\frac{P^{\check{\mathsf{T}}}_n(\Pi_{M_{k_n}}<\Pi_{M_{k_n+1}})\times P^{\check{\mathsf{T}}}_{M_{k_n}}\!(X_{S_0}=\mathbf{u}_0, S_0< \Pi_{M_{k_n+1}})}{P^{\check{\mathsf{T}}}_n(S_0< \Pi_{M_{k_n+1}})}
\end{eqnarray*}
by the strong Markov property of the random walk. Besides, simple considerations show that $P^{\check{\mathsf{T}}}_n(S_0< \Pi_{M_{k_n+1}})\geq 1/2$ on the event $\{M_{k_n+1}>2n\}$. Hence, we have
\begin{align*}
\P\big(M_{k_n+1}\!>2n, P^{[\check{\mathsf{T}}]^n}(X_{\tau_n}=\mathbf{u}_0)\geq n^{-\lambda+\delta}\big) &= \P\big(M_{k_n+1}\!>2n, P^{\check{\mathsf{T}}}_n(X_{S_0}=\mathbf{u}_0\!\mid\! S_0< \Pi_{M_{k_n+1}})\geq  n^{-\lambda+\delta} \big) \\
&\leq  \P\big(M_{k_n+1}\!>2n, P^{\check{\mathsf{T}}}_{M_{k_n}}\!(X_{S_0}=\mathbf{u}_0, S_0< \Pi_{M_{k_n+1}})\geq n^{-\lambda+\delta}/2\big)\\
&\leq  \P\big( P^{\check{\mathsf{T}}}_{M_{k_n}}\!(X_{S_0}=\mathbf{u}_0, S_0< \Pi_{M_{k_n+1}})\geq n^{-\lambda+\delta}/2\big).
\end{align*}
The last display, together with (\ref{eq:1/8-inequality}) and (\ref{eq:prod-ind}), yields the first inequality in the statement of the proposition.

We can argue in a similar manner for the second inequality stated in the proposition. Its proof is even simpler because it suffices to use the bound
\begin{displaymath}
P^{\check{\mathsf{T}}}_{M_{k_n}}(X_{S_0}=\mathbf{u}_0, S_0< \Pi_{M_{k_n+1}}) \leq P^{\check{\mathsf{T}}}_n(X_{S_0}=\mathbf{u}_0\!\mid\! S_0< \Pi_{M_{k_n+1}}),
\end{displaymath}
instead of the estimate $P^{\check{\mathsf{T}}}_n(S_0< \Pi_{M_{k_n+1}})\geq 1/2$ used above.
\end{proof}

According to (\ref{eq:3}) and the preceding result, we can therefore derive Theorem~\ref{thm:main-dis} from the following proposition.
\begin{proposition}
\label{prop:red-theorem}
For every $\delta>0$, it holds that
\begin{equation}
\label{eq:red-theorem}
\lim_{n\to \infty}  \P\Big(n^{-\lambda-\delta}\leq P^{\check{\mathsf{T}}}_{M_{k_n}}\!(X_{S_0}=\mathbf{u}_0, S_0<\Pi_{M_{k_n+1}}) \leq n^{-\lambda+\delta}  \Big) =1.
\end{equation}
\end{proposition}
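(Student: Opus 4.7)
My plan is to write $P_n \colonequals P^{\widecheck{\mathsf{T}}}_{M_{k_n}}(X_{S_0}=\mathbf{u}_0, S_0<\Pi_{M_{k_n+1}})$ as a product of local conductance ratios indexed by the critical spine vertices $\mathbf{u}_{M_j}$, in close parallel with the appearance of $\mathcal{C}(T_{(v_1)})/(\mathcal{C}(T_{(1)})+\mathcal{C}(T_{(2)}))$ in the proof of Theorem~\ref{thm:main-continu}. Root the part of $\widecheck{\mathsf{T}}$ lying strictly below $\mathbf{u}_{M_{k_n+1}}$ at $\mathbf{u}_{M_{k_n}}$, view both the generation-$0$ vertices and $\mathbf{u}_{M_{k_n+1}}$ as absorbing sinks, and apply the discrete analogue of the harmonic-measure flow property (the discrete counterpart of Lemma~2.3 in~\cite{LIN}). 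Let $\mathcal{A}_l$ denote the effective conductance from $\mathbf{u}_l$ to generation $0$ in the subtree of its descendants and $\widehat C_j$ the contribution of the grafted trees at $\mathbf{u}_{M_j}$. Non-critical spine vertices give local factor $1$, so only the critical ones contribute, and with $g(x)=x/(1+x)$:
\[
P_n \;=\; \frac{g(\mathcal{A}_{M_{k_n}-1})}{g(\mathcal{A}_{M_{k_n}-1}) + \widehat C_{k_n} + 1/L_{k_n+1}}\;\prod_{j=1}^{k_n-1}\Big(1-\frac{\widehat C_j}{\mathcal{A}_{M_j}}\Big),
\]
where series-and-parallel reduction along the spine between consecutive critical levels yields the clean recursion $\mathcal{A}_{M_j} = \mathcal{A}_{M_{j-1}}/(1+L_j\mathcal{A}_{M_{j-1}}) + \widehat C_j$, with $\mathcal{A}_0 = \infty$. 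Crucially, $\widehat C_j$ depends only on the grafted subtree at $\mathbf{u}_{M_j}$ and is thereby independent of everything below $\mathbf{u}_{M_j}$.

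Rescale by $M_j$: set $\widetilde{\mathcal{A}}_j \colonequals M_j\,\mathcal{A}_{M_j}$, $\widetilde C_j \colonequals M_j\,\widehat C_j$, $V_j \colonequals L_j/M_j$. The recursion rewrites as
\[
\widetilde{\mathcal{A}}_j \;=\; \Big(V_j + \frac{1-V_j}{\widetilde{\mathcal{A}}_{j-1}}\Big)^{-1} + \widetilde C_j.
\]
Two convergences hold as $M_j\to\infty$: asymptotic analysis of the estimate~\eqref{eq:epsilon-estimate} for the indicators $(\epsilon_i)$ gives $\P(L_j/M_j\ge x\mid M_j=m)\to(1-x)^2$, so $V_j$ converges in distribution to the variable $V$ of~\eqref{eq:c*-rde}; and since, conditionally on $\epsilon_{M_j}=1$, the unique grafted subtree at $\mathbf{u}_{M_j}$ reaching generation $0$ is (up to negligible corrections) distributed as $\mathsf{T}^{(M_j-1)}$, Proposition~\ref{prop:cv-conductance} gives $\widetilde C_j \Rightarrow \mathcal{C}$. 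Setting $X_j \colonequals \widetilde{\mathcal{A}}_j - \widetilde C_j$, the recursion reads $X_j = (V_j + (1-V_j)/(X_{j-1}+\widetilde C_{j-1}))^{-1}$; since $X_{j-1}$ and $\widetilde C_{j-1}$ are independent by the remark above, this is exactly the RDE~\eqref{eq:c*-rde} characterising $\widehat{\mathcal{C}}$. By Proposition~\ref{prop:c*-law}(1) the chain $(X_j)$ is ergodic with stationary law $\widehat\gamma$, so $\widetilde{\mathcal{A}}_j \Rightarrow \widehat{\mathcal{C}}+\mathcal{C}$ (with $\widehat{\mathcal{C}}$ and $\mathcal{C}$ independent), and each factor converges: $1-\widetilde C_j/\widetilde{\mathcal{A}}_j \Rightarrow \widehat{\mathcal{C}}/(\widehat{\mathcal{C}}+\mathcal{C})$.

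Take logarithms in the product. The independence of $X_j$ from $\widetilde C_j$, together with ergodicity of $(X_j)$ and the integrability provided by Lemma~\ref{lem:L2-gw-condct} and Proposition~\ref{prop:c*-law}(2), should yield the law of large numbers
\[
\frac{1}{k_n-1}\sum_{j=1}^{k_n-1}\log\Big(1-\frac{\widetilde C_j}{\widetilde{\mathcal{A}}_j}\Big) \;\xrightarrow[n\to\infty]{(\P)}\; \E\Big[\log\frac{\widehat{\mathcal{C}}}{\widehat{\mathcal{C}}+\mathcal{C}}\Big] \;=\; -\frac{\lambda}{2},
\]
the last equality being~\eqref{eq:lambda-log}. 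The boundary factor at $\mathbf{u}_{M_{k_n}}$ contributes only an $O_\P(1)$ term to $\log P_n$ (both $\widetilde C_{k_n}$ and $M_{k_n}/L_{k_n+1}$ are $O_\P(1)$), and Lemma~\ref{lem:kn-asymptotic} gives $k_n/(2\log n)\to 1$ in $\P$-probability. Combining these gives $\log P_n/\log n \to -\lambda$ in probability, which is equivalent to~\eqref{eq:red-theorem}.

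The principal obstacle will be rigorously executing the ergodic-averaging step. The chain $(X_j)$ is neither i.i.d.\ nor started in stationarity (one has $\widetilde{\mathcal{A}}_1 \approx 1+\mathcal{C}$ rather than $\widehat{\mathcal{C}}+\mathcal{C}$), and its transition kernel depends on $j$ through the discrete laws of $V_j$ and $\widetilde C_j$, which converge only weakly to their limiting data $V,\mathcal{C}$. Making the LLN precise requires (i)~a quantitative contraction estimate for the map $\widehat\Phi$ of Proposition~\ref{prop:c*-law}(1), to propagate the initial discrepancy away fast enough, together with (ii)~uniform integrability of the summands $\log(1-\widetilde C_j/\widetilde{\mathcal{A}}_j)$, which would blow up if some $\widetilde{\mathcal{A}}_j$ were anomalously small; the $L^2$-moment bound of Lemma~\ref{lem:L2-gw-condct} on the discrete grafted conductances, combined with the finite moments of $\widehat\gamma$ from Proposition~\ref{prop:c*-law}(2), should suffice to rule this out. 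A secondary but benign subtlety is the randomness of the number of summands $k_n$, which is absorbed by Lemma~\ref{lem:kn-asymptotic}.
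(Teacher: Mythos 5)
Your current-flow factorization of $P^{\check{\mathsf{T}}}_{M_{k_n}}(X_{S_0}=\mathbf{u}_0, S_0<\Pi_{M_{k_n+1}})$ over the critical spine vertices is correct, and the algebraic observation that $X_j=\widetilde{\mathcal{A}}_j-\widetilde C_j$ formally satisfies the recursion $X_j=(V_j+(1-V_j)/(X_{j-1}+\widetilde C_{j-1}))^{-1}$ is a clean way to make the RDE~\eqref{eq:c*-rde} visible in the discrete model. Your identification of the limiting summand expectation $\E[\log(\widehat{\mathcal{C}}/(\widehat{\mathcal{C}}+\mathcal{C}))]=-\lambda/2$ via~\eqref{eq:lambda-log} is also correct, and the use of Lemma~\ref{lem:kn-asymptotic} to convert the sum length $k_n$ into $2\log n$ is exactly right. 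The decomposition is a legitimate (and arguably more transparent) alternative to the paper's first-step recursion, which arrives at $p_{k_n}=p_1\exp(-\sum_{j=2}^{k_n}Q_j)$ with $Q_j=\log(1+(c_j+\ell_{j+1})/\ell_j-\ell_j/(\ell_j+c_{j-1}+h_{j-1}))$.

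The genuine gap, which you flag yourself, is the law-of-large-numbers step. First, the claimed independence of $X_{j-1}$ and $\widetilde C_{j-1}$ does not hold unconditionally: both depend on $M_{j-1}$ (one through the scaling factor and the conditioning of the grafted tree on its height, the other through $V_{j-1},\ldots,V_1$ and the earlier $\widetilde C$'s), so the RDE is only an asymptotic identity, not an exact one, and Proposition~\ref{prop:c*-law}(1) cannot be invoked as a stationarity statement for the finite-$n$ chain. Second, the chain is inhomogeneous (the laws of $V_j$ and $\widetilde C_j$ vary with $j$ and only converge weakly), it is not started in stationarity, and the summands are unbounded near $\widetilde{\mathcal{A}}_j\approx\widetilde C_j$. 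Your proposed remedy (a quantitative contraction rate for $\widehat\Phi$ plus uniform integrability) is plausible in spirit but is a substantial programme that you do not carry out; no such contraction estimate is established or cited in the paper.

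The paper circumvents ergodic machinery entirely by a direct $L^2$ second-moment argument (Lemma~\ref{lem:sum-L2-conv}). The first moment $\E[Q_j]\to\E[Q_\infty]=\lambda/2$ is obtained from the joint weak convergence of Lemma~\ref{lemma:5rv-cv-in-law} together with the uniform $L^p$ bounds of Lemma~\ref{lemma:auxiliary-L1}(iii) (themselves resting on Lemma~\ref{lem:L2-gw-condct} and Lemma~\ref{lemma:L2-bdd-hk}). The second moment of the Cesàro average is then bounded by $\lambda^2/4$ in Lemma~\ref{lemma:L2-limsup} via a decoupling device: for well-separated indices $i\ll j$, $h_{j-1}$ is replaced by a truncated $h_{j-1}^\varepsilon$ depending only on a finite window of the tree, which makes $Q_j^\varepsilon$ conditionally independent of $\widetilde{\mathcal{F}}_i$; the truncation error is controlled by~\eqref{eq:cond-continu-epsilon} and Lemma~\ref{lemma:L2-bdd-hk}. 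This decoupling trick is the decisive technical ingredient that your outline would have to replicate (or replace by a proven contraction for $\widehat\Phi$) before the proof is complete.
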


\subsection{Proof of Proposition~\ref{prop:red-theorem}}

Under the probability measure $\P$, we set, for every $k\geq 1$, 
\begin{displaymath}
p_k=p_k(\widecheck{\mathsf{T}}) \colonequals P^{\check{\mathsf{T}}}_{M_k}(X_{S_0}=\mathbf{u}_0, S_0<\Pi_{M_{k+1}}).
\end{displaymath}
By the definition of $M_k$, there exists at least one plane tree grafted to $\mathbf{u}_{M_k}$ that reaches generation~0. The root of this subtree is necessarily a child of $\mathbf{u}_{M_k}$ distinct from $\mathbf{u}_{M_k-1}$. If such a subtree is unique, we let $c_k=c_k(\widecheck{\mathsf{T}})$ be the probability that a simple random walk starting from its root reaches generation 0 before visiting $\mathbf{u}_{M_k}$. If there is more than one such grafted trees, we take $c_k$ to be the sum of the corresponding probabilities. This definition is justified by the fact that $c_k$ can be interpreted as the effective conductance between $\mathbf{u}_{M_k}$ and generation 0 in the graph that consists only of the vertex $\mathbf{u}_{M_k}$ and all the subtrees grafted to it.

We also set, for every $k\geq 1$,
\begin{displaymath}
h_k=h_k(\widecheck{\mathsf{T}}) \colonequals P^{\check{\mathsf{T}}}_{M_k-1}(S_0<\Pi_{M_k}),
\end{displaymath}
which is the probability that a simple random walk starting from $\mathbf{u}_{M_k-1}$ reaches generation 0 before visiting $\mathbf{u}_{M_k}$. With the notation of Section~\ref{sec:cv-dis-cond}, it is clear that
\begin{displaymath}
h_k=\mathcal{C}_{M_k-1}([\widecheck{\mathsf{T}}]^{M_k-1}).
\end{displaymath}

We write $\ell_k=1/L_k= (M_k-M_{k-1})^{-1}$ for all $k\geq 1$. Then simple considerations show that
\begin{displaymath}
p_1=\frac{\ell_1}{\ell_1+c_1+\ell_2},
\end{displaymath}
and, for all $k\geq 2$,
\begin{equation}
\label{eq:prob-iter}
p_k=\frac{\ell_k}{\ell_k+c_k+\ell_{k+1}}\Big(p_{k-1}+\frac{\ell_k}{\ell_k+c_{k-1}+h_{k-1}}\,p_k  \Big).
\end{equation}
To establish the last formula, we consider the excursions of simple random walk outside of vertex $\mathbf{u}_{M_k}$, which are independent of the same law. Under this excursion law, the random walk makes its first jump with equal probability towards one of its neighbors, which are $\mathbf{u}_{M_k-1}$, $\mathbf{u}_{M_k+1}$ and the children of $\mathbf{u}_{M_k}$ distinct from $\mathbf{u}_{M_k-1}$. The respective probabilities for an excursion to visit $\mathbf{u}_{M_{k-1}}$, to visit $\mathbf{u}_{M_{k+1}}$, and to reach generation 0 in one of the subtrees grafted at $\mathbf{u}_{M_k}$, are proportional respectively to $1/L_k=\ell_k$, to $1/L_{k+1}=\ell_{k+1}$ and to $c_k$. So the probability for the random walk starting from $\mathbf{u}_{M_k}$ to visit $\mathbf{u}_{M_{k-1}}$ before hitting $\mathbf{u}_{M_{k+1}}$ or reaching generation 0 is
\begin{displaymath}
\frac{\ell_k}{\ell_k+\ell_{k+1}+c_k}.
\end{displaymath}
Next, conditionally on the latter event, the strong Markov property leads us to consider a simple random walk that starts from $\mathbf{u}_{M_{k-1}}$. With probability $p_{k-1}$ it reaches generation 0 by hitting the vertex $\mathbf{u}_0$ before moving down to $\mathbf{u}_{M_k}$. However, we must also add the probability that this random walk goes back down to $\mathbf{u}_{M_k}$ before reaching generation 0, which is equal to
\begin{displaymath}
\frac{\ell_k}{\ell_k+c_{k-1}+h_{k-1}},
\end{displaymath}
multiplied by the probability $p_k$ that once returning to $\mathbf{u}_{M_k}$ the random walk will eventually hit generation 0 at $\mathbf{u}_0$ before moving down to $\mathbf{u}_{M_{k+1}}$.

We derive from~(\ref{eq:prob-iter}) that
\begin{displaymath}
p_{k-1}= p_k\Big( \frac{\ell_k+c_k+\ell_{k+1}}{\ell_k}-\frac{\ell_k}{\ell_k+c_{k-1}+h_{k-1}}\Big),
\end{displaymath}
from which it follows that
\begin{displaymath}
p_1=p_k \times \prod_{j=2}^k\Big( 1+ \frac{c_j+\ell_{j+1}}{\ell_j}-\frac{\ell_j}{\ell_j+c_{j-1}+h_{j-1}}\Big).
\end{displaymath}
We define thus, for every $j\geq 2$,
\begin{displaymath}
Q_j=Q_j(\widecheck{\mathsf{T}}) \colonequals  \log \Big( 1+\frac{c_j+\ell_{j+1}}{\ell_j}-\frac{\ell_j}{\ell_j+c_{j-1}+h_{j-1}}\Big).
\end{displaymath}

\begin{lemma}
\label{lem:sum-L2-conv}
We have
\begin{equation}
\label{eq:sum-L2-conv}
\frac{1}{k} \sum\limits_{j=2}^k Q_j  \,\xrightarrow[k\to\infty]{L^2(\P)} \, \frac{\lambda}{2}.
\end{equation}
\end{lemma}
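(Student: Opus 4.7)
My plan is to establish the $L^2$ convergence via the standard route: show that $\E[Q_j]\to\lambda/2$ and that $\mathrm{Var}\bigl(k^{-1}\sum_{j=2}^k Q_j\bigr)=o(1)$.

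\textbf{Rescaling and joint limits.}
Setting $V_j:=L_j/M_j\in(0,1)$, I first rewrite
$$
Q_j \;=\; \log\!\left(1 + V_j(M_j c_j) + \frac{V_j(1-V_{j+1})}{V_{j+1}} - \frac{1-V_j}{(1-V_j)+V_j\bigl(M_{j-1}c_{j-1}+M_{j-1}h_{j-1}\bigr)}\right).
$$
A direct calculation based on~(\ref{eq:epsilon-estimate}) shows that $V_j$ converges in distribution to a limit $V$ with density $2(1-v)\mathbf{1}_{[0,1]}(v)$ and that $V_j,V_{j+1}$ are asymptotically independent. By the size-biased analogue of Proposition~\ref{prop:cv-conductance} stated in the remark that follows it, $(M_{j-1}-1)h_{j-1}\to\widehat{\mathcal{C}}$ in distribution. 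A parallel argument---combined with the observation that, conditionally on $\epsilon_{M_j}=1$, asymptotically only one grafted subtree at $\mathbf{u}_{M_j}$ reaches generation $0$---yields $(M_j-1)c_j\to\mathcal{C}$ in distribution. Since the subtrees grafted at different spine vertices are independent, I obtain the joint convergence
$$
(V_j,V_{j+1},M_jc_j,M_{j-1}c_{j-1},M_{j-1}h_{j-1}) \;\xrightarrow[j\to\infty]{(\mathrm{d})}\; (V,V',\mathcal{C},\mathcal{C}',\widehat{\mathcal{C}})
$$
with mutually independent limits on the right, so that $Q_j\to Q_\infty$ in distribution for an explicit random variable $Q_\infty$.

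\textbf{Mean of the limit and variance of the average.}
Substituting the joint limits into the rewritten expression for $Q_j$ and simplifying using the distributional identity~(\ref{eq:c*-rde}) for $\widehat{\mathcal{C}}$ and the identity~(\ref{eq:lambda-log}) for $\lambda$, a short computation gives $\E[Q_\infty]=\lambda/2$. Uniform integrability of $Q_j^2$---hence the convergence $\E[Q_j]\to\lambda/2$---follows from Lemma~\ref{lem:L2-gw-condct} applied to $(M_j-1)c_j$ together with its size-biased analogue for $(M_{j-1}-1)h_{j-1}$. For the $L^2$ convergence of the Ces\`aro averages I further need $\mathrm{Cov}(Q_i,Q_j)\to 0$ as $|i-j|\to\infty$. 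The only variable in $Q_j$ whose dependence spans more than a few consecutive spine segments is $h_{j-1}$, which is measurable with respect to the whole tree $[\widecheck{\mathsf{T}}]^{M_{j-1}-1}$; however, the contractive character of the conductance recursion (encoded in the unique-fixed-point property of the iteration $\widehat\Phi$ in Proposition~\ref{prop:c*-law}) forces $(M_{j-1}-1)h_{j-1}$ to decorrelate asymptotically from the structure far below, so that a standard bookkeeping gives the required covariance decay and $\mathrm{Var}\bigl(k^{-1}\sum_j Q_j\bigr)=o(1)$.

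\textbf{Main obstacle.}
The hardest ingredient is the joint convergence of the rescaled conductances, and in particular identifying the limit of $(M_j-1)c_j$ as $\mathcal{C}$ rather than $\widehat{\mathcal{C}}$: conditionally on $\epsilon_{M_j}=1$ the grafted structure at $\mathbf{u}_{M_j}$ is biased through $\widehat{N}_{M_j}-1$, so one must justify the asymptotic reduction to a single reaching subtree and then invoke Proposition~\ref{prop:cv-conductance} in a quantitative form (together with the $L^2$ bound of Lemma~\ref{lem:L2-gw-condct}) strong enough to propagate through the logarithm and deliver both $\E[Q_j]\to\lambda/2$ and the variance estimate.
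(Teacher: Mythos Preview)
Your overall strategy matches the paper's: establish joint convergence in distribution of the rescaled ingredients of $Q_j$, identify $\E[Q_\infty]=\lambda/2$ via the distributional identity for $\widehat{\mathcal{C}}$, and then control the variance of the Ces\`aro average. The first two steps are essentially those of Lemmas~\ref{lemma:5rv-cv-in-law} and~\ref{lemma:auxiliary-L1}. There are, however, two genuine gaps.

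First, your uniform-integrability claim is incomplete. The term $V_j(1-V_{j+1})/V_{j+1}$ in your rewriting of $Q_j$ (equivalently $M_k/L_{k+1}$ in the paper's notation) is not a conductance and is not controlled by Lemma~\ref{lem:L2-gw-condct} or any size-biased analogue; in the limit $1/V_{j+1}$ behaves like $1/V$ with $V$ of density $2(1-v)$, which has infinite mean. The paper handles this by bounding $|Q_k|\le A+(M_kc_k)^{1/2}+(M_k/L_{k+1})^{1/2}+(M_{k-1}/L_k)^{1/2}$ and proving the separate tail estimate $\sup_k\E[(M_k/L_{k+1})^\alpha]<\infty$ for some $\alpha\in(1/2,1)$ (see~\eqref{eq:1/2-unif-bound}), obtained directly from~\eqref{eq:LMproduct} and~\eqref{eq:epsilon-estimate}.

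Second, and more seriously, your decorrelation argument for $\mathrm{Cov}(Q_i,Q_j)$ is only a heuristic. The unique-fixed-point property of $\widehat\Phi$ in Proposition~\ref{prop:c*-law} concerns iteration with \emph{exact} inputs $(V,\mathcal{C})$ of the limiting laws, whereas the recursion $M_kh_k = G\bigl(L_k/M_k,\,M_{k-1}c_{k-1},\,M_{k-1}h_{k-1}\bigr)$ uses inputs whose laws are only asymptotically correct and are correlated along $k$; no quantitative forgetting follows from Proposition~\ref{prop:c*-law} alone, and ``standard bookkeeping'' does not close the argument. The paper's mechanism is concrete and different: it introduces a truncated conductance $h_{j-1}^\varepsilon$ (the conductance down to generation $-\lfloor\varepsilon M_{j-1}\rfloor$ rather than generation~$0$), proves the deterministic bound $0\le Q_j^\varepsilon-Q_j\le 2\varepsilon\,M_{j-1}h_{j-1}$, controls the right-hand side in $L^2$ via a separate uniform second-moment bound $\sup_k\E[(M_kh_k)^2]<\infty$ (Lemma~\ref{lemma:L2-bdd-hk}), and then observes that on the high-probability event $\{M_{i+1}<\varepsilon M_{j-1}\}$ the variable $Q_j^\varepsilon$ is conditionally independent of $Q_i$ given $M_{i+1}$ alone. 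One then checks that the conditional law of $Q_j^\varepsilon$ given $M_{i+1}$ still converges, yielding $\limsup_{k}\E\bigl[(k^{-1}\sum Q_j)^2\bigr]\le\E[Q_\infty]^2$, which combined with~\eqref{eq:lim-L1} gives the $L^2$ convergence. This truncation-plus-conditional-independence step is in fact the hardest part of the proof, not the joint-convergence issue you flag as the main obstacle.
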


The proof of this key lemma is postponed to the next section. Let us first show how it implies Proposition~\ref{prop:red-theorem} and thus Theorem~\ref{thm:main-dis}. For any $\delta>0$, consider the event
\begin{displaymath}
\Big\{(\lambda-\delta)\log n\leq \sum\limits_{j=2}^{k_n} Q_j \leq (\lambda+\delta)\log n \Big\}.
\end{displaymath}
Using Lemma~\ref{lem:kn-asymptotic} and Lemma~\ref{lem:sum-L2-conv}, we see that the last event holds with $\P$-probability tending to~1 as $n\to \infty$. As
\begin{displaymath}
p_{k_n}=p_1 \exp\Big(-\sum\limits_{j=2}^{k_n} Q_j \Big),
\end{displaymath}
we have
\begin{displaymath}
\lim_{n\to \infty}  \P\Big( p_1 n^{-\lambda-\delta}\leq p_{k_n}\leq p_1 n^{-\lambda+\delta}  \Big) =1.
\end{displaymath}
Recalling the definition of $p_k$, we conclude that
\begin{displaymath}
\lim_{n\to \infty}  \P\Big( p_1 n^{-\lambda-\delta}\leq P^{\check{\mathsf{T}}}_{M_{k_n}}(X_{S_0}=\mathbf{u}_0, S_0<\Pi_{M_{k_n+1}}) \leq p_1 n^{-\lambda+\delta}  \Big) =1.
\end{displaymath}
Since $\delta$ is arbitrary, the required convergence (\ref{eq:red-theorem}) readily follows from the last display. Therefore, it remains to prove Lemma~\ref{lem:sum-L2-conv}.

\subsection{Proof of Lemma~\ref{lem:sum-L2-conv}}

By the definition of $Q_k$, we can write for every $k\geq 2$,
\begin{displaymath}
Q_k=\log \bigg(1+ \frac{M_kc_k+\frac{M_k}{L_{k+1}}}{\frac{M_k}{L_k}}- \frac{\frac{M_k}{L_k}}{\frac{M_k}{L_k}+\frac{M_k}{M_{k-1}}(M_{k-1}c_{k-1}+M_{k-1}h_{k-1})}\bigg).
\end{displaymath}

\begin{lemma}
\label{lemma:5rv-cv-in-law}
We have
\begin{displaymath}
\Big(\frac{L_{k+1}}{M_k},\frac{L_k}{M_{k-1}},M_kc_k,M_{k-1}c_{k-1},M_{k-1}h_{k-1}\Big)  \, \build{\longrightarrow}_{k\to\infty}^{(\mathrm{d})}\, \big(\mathcal{R},\mathcal{R}',\mathcal{C},\mathcal{C}', \widehat{\mathcal{C}}\, \big),
\end{displaymath}
where in the limit
\begin{itemize}
\item $\mathcal{R}$ and $\mathcal{R}'$ are two positive random variables with the same distribution given by
    $$\P(\mathcal{R}>x)=(1+x)^{-2}  \mbox{ for all } x\geq 0;$$
\item $\mathcal{C}$ and $\mathcal{C}'$ are distributed according to the law $\gamma$;
\item $\widehat{\mathcal{C}}$ is distributed according to the law $\widehat \gamma$.
\end{itemize}
Furthermore, we suppose that $\mathcal{R},\mathcal{R}',\mathcal{C},\mathcal{C}', \widehat{\mathcal{C}}$ are all defined under the probability measure $\P$, and they are independent.
\end{lemma}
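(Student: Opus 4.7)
The plan rests on a clean decomposition of $\widecheck{\mathsf{T}}$. Conditionally on the spine positions $M_{k-1}, M_k, M_{k+1}$ (in particular, $L_k/M_{k-1}$ and $L_{k+1}/M_k$ are deterministic functions of them), the conductance $c_j$ for $j\in\{k-1,k\}$ depends only on the $\theta$-Galton--Watson subtrees grafted at the single spine vertex $\mathbf{u}_{M_j}$ (restricted to the event $\epsilon_{M_j}=1$), while $h_{k-1}$ depends only on the finite tree $[\widecheck{\mathsf{T}}]^{M_{k-1}-1}$ rooted at $\mathbf{u}_{M_{k-1}-1}$. Since the grafted subtrees at distinct spine vertices are mutually independent, $c_{k-1}$, $c_k$, and $h_{k-1}$ are conditionally independent given $(M_{k-1},M_k,M_{k+1})$. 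With this decomposition, it suffices to establish marginal convergence of each component and check that the limit laws do not depend on the conditioning values, so that the conditional independence passes to the limit.

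For the ratios, the independence of the indicators $(\epsilon_j)_{j\geq 1}$ (inherited from independence of grafted subtrees) combined with \eqref{eq:epsilon-estimate} yields, conditionally on $M_k=m$,
\[
\P\bigl(L_{k+1}/M_k > x\bigm|M_k=m\bigr)\;=\;\prod_{j=1}^{\lfloor mx\rfloor}\P(\epsilon_{m+j}=0)\;\longrightarrow\;(1+x)^{-2}
\]
as $m\to\infty$. Since $M_k\to\infty$ a.s.\ by Lemma~\ref{lem:kn-asymptotic}, this gives $L_{k+1}/M_k\xrightarrow{(\mathrm d)}\mathcal{R}$ and, by the same calculation, $L_k/M_{k-1}\xrightarrow{(\mathrm d)}\mathcal{R}'$; the Markov property of $(\epsilon_j)$ at time $M_k$ provides the joint convergence with $\mathcal{R}$ and $\mathcal{R}'$ independent. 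For $M_{k-1}h_{k-1}$, the identification $[\widecheck{\mathsf{T}}]^{M_{k-1}-1}\stackrel{(\mathrm d)}{=}[\widehat{\mathsf{T}}]^{M_{k-1}-1}$ established at the end of Section~\ref{sec:backward-size-biased}, together with the size-biased version of Proposition~\ref{prop:cv-conductance} stated in the remark there (namely $n\,\mathcal{C}_n([\widehat{\mathsf{T}}]^n)\xrightarrow{(\mathrm d)}\widehat{\mathcal{C}}$), directly yields $M_{k-1}h_{k-1}\xrightarrow{(\mathrm d)}\widehat{\mathcal{C}}$.

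The delicate point is $M_kc_k\xrightarrow{(\mathrm d)}\mathcal{C}$, because the structure at $\mathbf{u}_{M_k}$ is \emph{not} a $\theta$-Galton--Watson tree conditioned on non-extinction: it consists of $\widehat N_{M_k}-1$ i.i.d.\ $\theta$-Galton--Watson trees conditioned on at least one reaching depth $M_k$. The plan is to reduce to the setting of Proposition~\ref{prop:cv-conductance} as follows. Using $q_m\sim 2/(m\sigma^2)$ from \eqref{eq:kolmogorov} and a simple truncation of $\widehat N$, the probability that two or more of these subtrees survive to depth $m$ is $o(1)\cdot\P(\epsilon_{M_k}=1)$; hence, conditionally on $\epsilon_{M_k}=1$, with probability tending to $1$ a unique surviving subtree $T^{*}$ exists, and conditionally on its existence $T^{*}$ is distributed as $\mathsf{T}^{(m-1)}$. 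Proposition~\ref{prop:cv-conductance} then gives $(m-1)\mathcal{C}_{m-1}(T^{*})\xrightarrow{(\mathrm d)}\mathcal{C}(\Delta)=\mathcal{C}$, while the $L^{2}$ bound of Lemma~\ref{lem:L2-gw-condct} applied to each surviving subtree guarantees that the rare multi-survivor event contributes negligibly to the limit in law. The same argument at level $k-1$ produces $M_{k-1}c_{k-1}\xrightarrow{(\mathrm d)}\mathcal{C}'$. Assembling these marginal convergences with the conditional independence of Step~1, and observing that none of the limiting laws depends on the conditioning variables, yields the joint convergence with five independent limits. The main obstacle, as described, is precisely the truncation-plus-$L^{2}$ analysis required to identify the scaling limit of $M_kc_k$ with that of a classical conditioned Galton--Watson tree.
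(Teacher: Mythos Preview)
Your treatment of the ratios $L_{k+1}/M_k,\,L_k/M_{k-1}$ and of the conductances $M_kc_k,\,M_{k-1}c_{k-1}$ is essentially the paper's argument (the paper also reduces $c_k$ to a single surviving conditioned Galton--Watson tree, noting that $\E[\widehat N]<\infty$ and $q_m\sim 2/(m\sigma^2)$ force uniqueness of the surviving subtree for large $k$).

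The gap is in your handling of $M_{k-1}h_{k-1}$. The identification $[\widecheck{\mathsf{T}}]^{n}\stackrel{(\mathrm d)}{=}[\widehat{\mathsf{T}}]^{n}$ from Section~\ref{sec:backward-size-biased} is valid only for \emph{deterministic} $n$. Here $n=M_{k-1}-1$ is random and is measurable with respect to the very grafted subtrees that make up $[\widecheck{\mathsf{T}}]^{M_{k-1}-1}$: conditioning on $M_{k-1}=m$ forces exactly $k-2$ of the indicators $\epsilon_1,\ldots,\epsilon_{m-1}$ to equal $1$, so the conditional law of $[\widecheck{\mathsf{T}}]^{m-1}$ given $M_{k-1}=m$ is \emph{not} that of $[\widehat{\mathsf{T}}]^{m-1}$. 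Hence you cannot invoke the remark $n\,\mathcal{C}_n([\widehat{\mathsf{T}}]^n)\xrightarrow{(\mathrm d)}\widehat{\mathcal{C}}$ (which, incidentally, the paper states without proof and explicitly flags as ``not needed in Section~\ref{sec:proof-thm1}''). For the same reason, your conditional-independence scheme does not immediately give the correct limiting marginal for $h_{k-1}$.

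The paper bypasses this issue entirely. It exploits the recursion
\[
M_{k-1}h_{k-1}=\Big(\tfrac{L_{k-1}}{M_{k-1}}+\tfrac{1-L_{k-1}/M_{k-1}}{M_{k-2}c_{k-2}+M_{k-2}h_{k-2}}\Big)^{-1},
\]
defines a depth-$J$ truncation $h_{k-1}^{(J)}$ (stopping the random walk at $\mathbf{u}_{M_{k-J-1}-1}$), and shows from the joint convergence of the $2J$-tuple $(L_{k-j}/M_{k-j},\,M_{k-j-1}c_{k-j-1})_{1\le j\le J}$ that $M_{k-1}h_{k-1}^{(J)}\xrightarrow{(\mathrm d)}\widehat\Phi^{J}(\delta_\infty)$, where $\widehat\Phi$ is the map \eqref{eq:iteration-map}. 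The fixed-point characterization of $\widehat\gamma$ in Proposition~\ref{prop:c*-law}(1) gives $\widehat\Phi^{J}(\delta_\infty)\Rightarrow\widehat\gamma$ as $J\to\infty$, and a direct conductance estimate yields $M_{k-1}|h_{k-1}^{(J)}-h_{k-1}|\le (M_{k-J-1}-1)/(M_{k-1}-M_{k-J-1}+1)\to 0$; Billingsley's Theorem~3.2 then closes the argument. This route never leaves the backward tree and never needs the distributional identity at a random level.
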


\begin{proof}
We first observe that $(M_{k-1},L_k)_{k\geq 1}$ is a homogeneous Markov chain on $\mathbb{Z}_+\times \mathbb{N}$ whose initial distribution and transition probabilities are given as follows. Initially $M_0=0$ and for every integer $\ell\geq 1$,
\begin{displaymath}
\P(L_1>\ell) = \prod_{j=1}^{\ell} \P(\epsilon_j=0),
\end{displaymath}
where $\{\epsilon_j=0\}$ means as previously that none of the Galton--Watson trees grafted at $\mathbf{u}_j$ reaches generation 0. Then for every $k\geq 1$, $M_{k}=M_{k-1}+L_{k}$ and conditionally on $\{M_{k}=m\}$,
\begin{equation}
\label{eq:LMproduct}
\P(L_{k+1}>\ell \mid M_{k}=m)= \prod_{j=1}^{\ell} \P(\epsilon_{m+j}=0)\equalscolon F(m,\ell) , \qquad \mbox{for every }\ell \geq 1.
\end{equation}
Using~(\ref{eq:epsilon-estimate}), it is elementary to verify that for every $x>0$,
\begin{displaymath}
F(m,\lfloor xm\rfloor) \build{\longrightarrow}_{m \to\infty}^{} \frac{1}{(1+x)^2}.
\end{displaymath}
For every $k\geq 1$, let $\mathcal{F}_k=\sigma(L_1,L_2,\ldots,L_k)$ be the $\sigma$-field generated by $(L_i, 1\leq i\leq k)$, so that $(\mathcal{F}_k)_{k\geq 1}$ is the natural filtration associated with the Markov chain $(M_{k-1},L_k)_{k\geq 1}$. As $M_{k}\geq k$, it is clear that $M_{k}\to \infty$ as $k\to \infty$. By dominated convergence and the last display, we get that for every $x,y>0$,
\begin{eqnarray*}
\P\Big(\frac{L_k}{M_{k-1}}>x, \frac{L_{k+1}}{M_k}>y\Big) & \!=\!& \E\bigg[ \mathbf{1}\Big\{\frac{L_k}{M_{k-1}}>x\Big\}\, \P\left(\frac{L_{k+1}}{M_k}>y \,\middle|\, \mathcal{F}_k\right)\bigg] \\
& \!=\!& \E\bigg[\mathbf{1}\Big\{\frac{L_k}{M_{k-1}}>x\Big\} \,\P\left(\frac{L_{k+1}}{M_k}>y \,\middle|\, M_k\right)\bigg] \\
& \!\!\build{\longrightarrow}_{k\to\infty}^{}\!\! &  \frac{1}{(1+x)^2(1+y)^2},
\end{eqnarray*}
which entails that
\begin{equation}
\label{eq:ML-conv}
\Big(\frac{L_k}{M_{k-1}},\frac{L_{k+1}}{M_k}\Big)  \, \build{\longrightarrow}_{k\to\infty}^{(\mathrm{d})}\, \big(\mathcal{R}',\mathcal{R}\big).
\end{equation}

Note that conditionally on $M_k$ and on the number of subtrees grafted at $\mathbf{u}_{M_k}$ that hit generation 0, these subtrees are Galton--Watson trees conditioned to have height greater than $M_k-1$. Furthermore, the property $\E[\widehat N]=\sum k^2\theta(k)<\infty$ and the estimate~(\ref{eq:kolmogorov}) entail that a.s.~for all sufficiently large $k$, there is a unique subtree grafted at $\mathbf{u}_{M_k}$ that reaches generation 0. Hence by Proposition~\ref{prop:cv-conductance}, we obtain the convergence
\begin{equation}
\label{eq:c-conv}
\big(M_{k-1}c_{k-1},M_kc_k\big)  \, \build{\longrightarrow}_{k\to\infty}^{(\mathrm{d})}\, \big(\mathcal{C}',\mathcal{C}\big),
\end{equation}
which holds jointly with~(\ref{eq:ML-conv}), provided we let $(\mathcal{C}',\mathcal{C})$ be independent of $(\mathcal{R}',\mathcal{R})$.

Let $J\geq 2$ be a fixed integer. We can generalize the preceding arguments to show that the $2J$-tuple
$$\Big(\frac{L_{k-1}}{M_{k-2}}, \frac{L_{k-2}}{M_{k-3}}, \ldots, \frac{L_{k-J}}{M_{k-J-1}}, M_{k-2}c_{k-2}, M_{k-3}c_{k-3},\ldots, M_{k-J-1}c_{k-J-1}\Big)$$
converges in distribution as $k\to \infty$ to $(\mathcal{R}_1,\mathcal{R}_2,\ldots,\mathcal{R}_J,\mathcal{C}_1,\mathcal{C}_2,\ldots,\mathcal{C}_J)$. These random variables appearing in the limit are all independent, and $(\mathcal{R}_j)_{1\leq j\leq J}$, respectively $(\mathcal{C}_j)_{1\leq j\leq J}$, have the same distribution as $\mathcal{R}$, resp.~as $\mathcal{C}$. If we set $V_j=\frac{\mathcal{R}_j}{1+\mathcal{R}_j}$ for all $1\leq j\leq J$, then $(V_j)_{1\leq j\leq J}$ are i.i.d.~with the same law of density $2(1-x)$ on $[0,1]$. Moreover, the previous convergence can be reformulated as
\begin{equation}
\label{eq:2J-cv-in-law}
\Big(\frac{L_{k-j}}{M_{k-j}},M_{k-j-1}c_{k-j-1}\Big)_{1\leq j\leq J}  \, \build{\longrightarrow}_{k\to\infty}^{(\mathrm{d})}\, (V_j, \mathcal{C}_j)_{1\leq j\leq J}.
\end{equation}
For all integers $k>J$ and $0\leq j \leq J$, we can define
\begin{displaymath}
h_k^{(j)}=h_k^{(j)}(\widecheck{\mathsf{T}}) \colonequals P^{\check{\mathsf{T}}}_{M_k-1}(S_0\wedge \Pi_{M_{k-j}-1}<\Pi_{M_k}),
\end{displaymath}
which is the probability that a simple random walk starting from $\mathbf{u}_{M_k-1}$ reaches generation 0 or the vertex $\mathbf{u}_{M_{k-j}-1}$ before visiting $\mathbf{u}_{M_k}$. By definition it is clear that $h_k^{(0)}=1$. From the interpretation of $h_{k-1}^{(J)}$ as an electric conductance, we obtain the formula
$$h_{k-1}^{(J)}=\Big( L_{k-1}+\big(c_{k-2}+h_{k-2}^{(J-1)}\big)^{-1}\Big)^{-1}$$
by the series law and parallel law. It follows that
$$M_{k-1}h_{k-1}^{(J)}= \left( \frac{L_{k-1}}{M_{k-1}}+\frac{1-\frac{L_{k-1}}{M_{k-1}}}{M_{k-2}c_{k-2}+M_{k-2}h_{k-2}^{(J-1)}}\right)^{-1}.$$
The same calculation can be repeated for $M_{k-2}h_{k-2}^{(J-1)},M_{k-3}h_{k-3}^{(J-2)} \ldots$ up until $M_{k-J}h_{k-J}^{(1)}$. By using~(\ref{eq:2J-cv-in-law}) and the fact that $h_{k-J-1}^{(0)}=1$, we see that the law of $M_{k-1}h_{k-1}^{(J)}$ converges weakly to the $J$-th iterate $\widehat \Phi^J(\delta_\infty)$ as $k\to \infty$, where $\widehat \Phi$ is the mapping defined in~(\ref{eq:iteration-map}). Moreover, according to assertion~(1) in Proposition~\ref{prop:c*-law}, $\widehat \Phi^J(\delta_\infty)$ converges weakly to $\widehat \gamma=\mathsf{Law}(\widehat{\mathcal{C}})$ as $J\to \infty$.

On the other hand,
$$ 0\leq h_{k}^{(J)}-h_{k} = P^{\check{\mathsf{T}}}_{M_k-1}(\Pi_{M_{k-J}-1}<\Pi_{M_k}<S_0).$$
Note that there is probability at least $1-\frac{M_{k-J}-1}{M_k}$ that, after hitting the vertex $\mathbf{u}_{M_{k-J}-1}$, the simple random walk on $\widecheck{\mathsf{T}}$ will reach generation 0 before hitting $\mathbf{u}_{M_k}$. Hence, by the strong Markov property of simple random walk, we have
$$h_{k}^{(J)}-h_{k} \leq P^{\check{\mathsf{T}}}_{M_k-1}(\Pi_{M_{k-J}-1}<S_0\wedge \Pi_{M_k})\times \frac{M_{k-J}-1}{M_k}. $$
By similar reasoning,
$$P^{\check{\mathsf{T}}}_{M_k-1}(\Pi_{M_{k-J}-1}<S_0\wedge \Pi_{M_k}) \leq \frac{1}{M_k-M_{k-J}+1},$$
and it follows that
$$M_kh_{k}^{(J)}-M_kh_{k}\leq \frac{M_{k-J}-1}{M_k-M_{k-J}+1}.$$
Thus, for any $\eta>0$,
$$\P \big(|M_{k-1}h_{k-1}-M_{k-1}h_{k-1}^{(J)}|\geq\eta \big) \leq \P\Big( \frac{M_{k-J-1}-1}{M_{k-1}-M_{k-J-1}+1} \geq \eta\Big). $$
But due to the previous discussions, it is clear that
$$ \lim_{J\to \infty} \limsup_{k\to \infty}\P\Big(\frac{M_{k-J-1}-1}{M_{k-1}-M_{k-J-1}+1}\geq \eta\Big) =0.$$
So we obtain, for any $\eta>0$, that
$$ \lim_{J\to \infty} \limsup_{k\to \infty} \P \big(|M_{k-1}h_{k-1}-M_{k-1}h_{k-1}^{(J)}|\geq\eta \big) =0.$$
Finally, by applying \cite[Theorem 3.2]{Bill99} we get that
\begin{equation}
\label{eq:h-cv-in-law}
M_{k-1}h_{k-1} \, \build{\longrightarrow}_{k\to\infty}^{(\mathrm{d})}\, \widehat{\mathcal{C}}.
\end{equation}
Notice that $h_{k-1}$ only depends on $M_{k-1}$ and the finite tree strictly above the vertex $\mathbf{u}_{M_{k-1}}$ in~$\widecheck{\mathsf{T}}$. So conditionally on $M_{k-1}$, the latter quantity $h_{k-1}$ is independent of $(L_k,L_{k+1},c_{k-1},c_k)$. In consequence, the convergence (\ref{eq:h-cv-in-law}) holds jointly with (\ref{eq:ML-conv}) and (\ref{eq:c-conv}), if we take $\widehat{\mathcal{C}}$ to be independent of $(\mathcal{R},\mathcal{R}',\mathcal{C},\mathcal{C}')$. The proof of Lemma~\ref{lemma:5rv-cv-in-law} is therefore complete.
\end{proof}

\begin{lemma}
\label{lemma:L2-bdd-hk}
It holds that
$$\sup_{k\geq 1} \E\big[(M_kh_k)^2\big]<\infty.$$
\end{lemma}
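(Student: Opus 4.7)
The proof reduces to establishing a uniform second-moment bound for the discrete recursion derived in the proof of Lemma~\ref{lemma:5rv-cv-in-law}. Setting $a_k := M_k h_k$, $b_k := M_k c_k$, and $V_k := L_k / M_k$, the recursion is
\[
a_k = G(V_k,\, b_{k-1} + a_{k-1}), \qquad G(v,t) := \frac{t}{1 + v(t-1)},
\]
and the elementary inequality $G(v,t) \le \max(1,t)$ for $v \in [0,1]$ and $t \ge 0$ shows in particular that $a_k \le 1 + b_{k-1} + a_{k-1}$ pointwise. The plan is to exploit the averaging over $V_k$ (whose conditional law given $M_{k-1}$ is approximately the density $2(1-v)\mathbf{1}_{[0,1]}$) to beat this crude deterministic bound.

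First, I would establish $\sup_k \E[b_k^2] < \infty$. Conditioning on $M_k = m$ and on the event that $\mathbf{u}_{M_k}$ is a ``success,'' the subtrees grafted at $\mathbf{u}_m$ form a forest of $\widehat N_m - 1$ independent $\theta$-Galton--Watson trees conditioned on at least one of them reaching generation $m-1$. Using the convergence of discrete conductances (Proposition~\ref{prop:cv-conductance}) together with Lemma~\ref{lem:L2-gw-condct} and the fact that $\widehat N$ has moments of all orders, one verifies that $\E[(M_k c_k)^2]$ is uniformly bounded in $k$.

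The heart of the argument is then the explicit integral estimate: for $s \ge 1$,
\[
\int_0^1 2(1-v) \, G(v,s)^2 \, \mathrm{d} v \;=\; \frac{2 s^2 (s - 1 - \log s)}{(s-1)^2} \;\le\; \kappa_1 + \kappa_2\, s,
\]
for universal constants $\kappa_1, \kappa_2 > 0$. If $V_k$ had exactly density $2(1-v)\mathbf{1}_{[0,1]}$ and were independent of $(a_{k-1}, b_{k-1})$, this would yield directly $\E[a_k^2] \le \kappa_1 + \kappa_2(\E[a_{k-1}] + \E[b_{k-1}])$. To make this rigorous for finite $k$ I would condition on the $\sigma$-field $\mathcal{F}_{k-1}$ generated by $L_1,\ldots,L_{k-1}$ and the grafted subtrees at the spine vertices up to $\mathbf{u}_{M_{k-1}}$: then $a_{k-1}, b_{k-1}$ are $\mathcal{F}_{k-1}$-measurable, while the conditional distribution of $V_k$ given $M_{k-1}$ is given explicitly by~(\ref{eq:LMproduct}). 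The estimate~(\ref{eq:epsilon-estimate}) then gives a uniform upper bound on the density of $V_k$ of the form $(1+o(1))\cdot 2(1-v)$ as $M_{k-1} \to \infty$, which preserves the integral bound up to a multiplicative constant for all $k$ with $M_{k-1}$ large enough; the finitely many remaining cases of small $M_{k-1}$ are handled trivially.

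Combining this conditional $L^2$-estimate with an analogous $L^1$-estimate $\int_0^1 2(1-v) G(v,s)\, \mathrm{d} v = O(\log s)$, which leads via Jensen's inequality to the self-improving recursion $\E[a_k] \le C + C' \log(1 + \E[a_{k-1}] + \E[b_{k-1}])$ and hence $\sup_k \E[a_k] < \infty$, one concludes $\sup_k \E[a_k^2] \le \kappa_1 + \kappa_2(\sup_k \E[a_k] + \sup_k \E[b_k]) < \infty$. The main obstacle is the uniform quantitative control of the conditional density of $V_k$: one needs an effective version of~(\ref{eq:epsilon-estimate}) valid not only asymptotically but also uniformly over $k$, so that the clean limiting integral estimate transfers to the prelimit. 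The unraveling of the conditioning hidden inside $c_k$ in the preliminary step is the other delicate point.
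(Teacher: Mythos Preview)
Your approach is workable in spirit but takes a more laborious route than the paper, and contains one factual slip.

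The paper avoids any density control on $V_k$ altogether. From the same recursion you write down, it uses the cruder pointwise bound (valid because $a_{k-1},b_{k-1}\ge 1$)
\[
a_k \;\le\; b_{k-1} \;+\; \frac{a_{k-1}}{1+V_k},
\]
then the elementary inequality $(x+y)^2\le C_\eta x^2+(1+\eta)y^2$, and finally only the tail estimate $\P(V_k\ge 1/2\mid M_{k-1})\ge c>0$ uniformly in $k$ (an immediate consequence of~\eqref{eq:LMproduct} and~\eqref{eq:epsilon-estimate}). This yields directly the $L^2$ contraction
\[
\E[a_k^2]\;\le\; C + \rho\,\E[a_{k-1}^2],\qquad \rho<1,
\]
and hence the supremum is finite. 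No $L^1$ bootstrap, no integral identity, no approximation of the discrete law of $V_k$ by $2(1-v)\,\mathrm{d}v$ is needed. By contrast, your plan requires transferring the continuous integral bound $\int_0^1 2(1-v)G(v,s)^2\,\mathrm{d}v=O(s)$ to the discrete conditional expectation uniformly in $M_{k-1}$; this is the obstacle you flag, and it is genuinely extra work (since $V_k$ is discrete, you would need a quantitative comparison of distribution functions rather than densities). The paper's one-line tail bound short-circuits all of that.

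One error: you assert that $\widehat N$ has moments of all orders. This is false in general --- $\theta$ is only assumed to have finite variance, so $\E[\widehat N]=\sum k^2\theta(k)=1+\sigma^2<\infty$, but $\E[\widehat N^2]=\sum k^3\theta(k)$ need not be finite. Fortunately your argument for $\sup_k\E[b_k^2]<\infty$ should only need $\E[\widehat N]<\infty$ (to control the expected number of surviving grafted subtrees, which is typically one), together with Lemma~\ref{lem:L2-gw-condct}; but the justification you sketch does not stand as written.
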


\begin{proof}
From the interpretation of $h_k$ as a conductance, we know by the series law and parallel law that
$$h_k=\Big(L_k+\big(c_{k-1}+h_{k-1}\big)^{-1}\Big)^{-1}.$$
It follows that
$$M_kh_k=\frac{M_{k-1}c_{k-1}+M_{k-1}h_{k-1}}{\frac{L_k}{M_k}(M_{k-1}c_{k-1}+M_{k-1}h_{k-1})+1-\frac{L_k}{M_k}}\,.$$
Again from the interpretation of $c_{k-1}$ and $h_{k-1}$ as conductances, it is elementary to see that $M_{k-1}c_{k-1}\geq 1$ and $M_{k-1}h_{k-1}\geq 1$. Hence,
$$M_k h_k \,\leq\, \frac{M_{k-1}c_{k-1}+M_{k-1}h_{k-1}}{\frac{2L_k}{M_k}+1-\frac{L_k}{M_k}} \,\leq\, M_{k-1}c_{k-1}+\frac{M_{k-1}h_{k-1}}{1+\frac{L_k}{M_k}}.$$
For any $\eta>0$, one can take $C(\eta)>1+\frac{1}{\eta}$ so that $(a+b)^2\leq C(\eta)a^2+(1+\eta)b^2$ for every $a,b>0$. Applying this inequality to the last display, we obtain
\begin{equation}
\label{eq:L2-hk-majoration}
\E\big[(M_kh_k)^2\big] \leq C(\eta)\E\big[(M_{k-1}c_{k-1})^2\big]+(1+\eta)\E\bigg[\Big(\frac{M_{k-1}h_{k-1}}{1+\frac{L_k}{M_k}}\Big)^2 \bigg].
\end{equation}

Now notice that
$$\E\bigg[\Big(\frac{M_{k-1}h_{k-1}}{1+\frac{L_k}{M_k}}\Big)^2 \bigg] = \E\bigg[\sum\limits_{\ell=1}^{\infty}\P(L_k=\ell \!\mid\! M_{k-1})\frac{(M_{k-1}h_{k-1})^2}{(1+\frac{\ell}{M_{k-1}+\ell})^2} \bigg].$$
According to (\ref{eq:LMproduct}), there exists a constant $c>0$ such that, a.s.~for all integers $k\geq 2$, we have $\P(L_k\geq M_{k-1}\!\mid\! M_{k-1})\geq c$. It follows that
\begin{align*}
& \; \sum\limits_{\ell \geq 1}^{} \P(L_k=\ell \!\mid\! M_{k-1})\frac{(M_{k-1}h_{k-1})^2}{(1+\frac{\ell}{M_{k-1}+\ell})^2}\\
\leq \; & \; \big(\frac{2}{3}\big)^2 \sum\limits_{\ell\geq M_{k-1}}^{} \P(L_k=\ell \!\mid\! M_{k-1}) (M_{k-1}h_{k-1})^2 +\!\sum\limits_{\ell <M_{k-1}}^{} \P(L_k=\ell \!\mid\! M_{k-1})(M_{k-1}h_{k-1})^2 \\
\leq \;& \; \big(1-\frac{5}{9}c\big)(M_{k-1}h_{k-1})^2,
\end{align*}
and thus
$$\E\bigg[\Big(\frac{M_{k-1}h_{k-1}}{1+\frac{L_k}{M_k}}\Big)^2 \bigg] \leq \big(1-\frac{5}{9}c\big) \E\big[(M_{k-1}h_{k-1})^2\big].$$
Together with (\ref{eq:L2-hk-majoration}), the last display entails that
$$\E\big[(M_kh_k)^2\big] \leq  C(\eta)\E\big[(M_{k-1}c_{k-1})^2\big]+(1+\eta)(1-\frac{5}{9}c) \E\big[(M_{k-1}h_{k-1})^2\big].$$

Recall that by Lemma~\ref{lem:L2-gw-condct}, $\E\big[(M_{k-1}c_{k-1})^2\big]$ is uniformly bounded with respect to $k$. So by choosing $\eta$ sufficiently small so that $(1+\eta)(1-\frac{5}{9}c)<1$, we see that there exist positive constants $C<\infty$ and $\rho<1$, both independent of $k$, such that for all $k\geq 2$,
$$\E\big[(M_kh_k)^2\big] \leq C+\rho \,\E\big[(M_{k-1}h_{k-1})^2\big].$$
The sequence $(\E[(M_kh_k)^2])_{k\geq 1}$ is therefore bounded.
\end{proof}

With the notation of Lemma~\ref{lemma:5rv-cv-in-law}, we set
\begin{displaymath}
Q_{\infty}\colonequals \log \bigg( 1+ \frac{\mathcal{C}+\frac{1}{\mathcal{R}}}{\frac{1+\mathcal{R}'}{\mathcal{R}'}}- \frac{1}{1+\mathcal{R}'\big(\mathcal{C}'+\widehat{\mathcal{C}}\,\big)} \bigg).
\end{displaymath}

\begin{lemma}
\label{lemma:auxiliary-L1}
\begin{enumerate}
\item[(i)] We have $\lim_{k\to \infty}\E[Q_k]= \E[Q_{\infty}]$.
\item[(ii)] We have the equality $\E[Q_{\infty}] = \lambda /2$.
\item[(iii)] It holds that
\begin{displaymath}
\sup_{i,j\geq 1} \E\big[|Q_iQ_j|\big]<\infty \quad \mbox{ and } \quad \sup_{i,j\geq 1} \E\big[(Q_iQ_j)^2\big]<\infty.
\end{displaymath}
\end{enumerate}
\end{lemma}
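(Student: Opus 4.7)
The plan is to prove parts (iii), (i), (ii) in that order: the $L^4$ bound of (iii) provides the uniform integrability needed to upgrade the distributional convergence of (i) to convergence of expectations, while (ii) is a separate algebraic computation.

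For (iii), after Cauchy--Schwarz it suffices to show $\sup_{k\ge 2}\E[Q_k^4]<\infty$. Using $0\le \ell_k/(\ell_k+c_{k-1}+h_{k-1})\le 1$, I would bound
\[
|Q_k| \le \log\!\Big(1+\frac{c_k+\ell_{k+1}}{\ell_k}\Big) + \log\!\Big(1+\frac{\ell_k}{c_{k-1}+h_{k-1}}\Big),
\]
and then apply $\log(1+a+b)\le \log(1+a)+\log(1+b)$ to reduce the task to fourth-moment bounds for $\log(1+M_k c_k)$, $\log(1+M_{k-1}h_{k-1})$, $\log(1+L_k/L_{k+1})$, and $\log(1+M_{k-1}/L_k)$. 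The first two are controlled by Lemmas~\ref{lem:L2-gw-condct} and~\ref{lemma:L2-bdd-hk} since $\log^4(1+x)=O(x^2)$ as $x\to\infty$. For the ratios, formula~(\ref{eq:LMproduct}) together with the estimate~(\ref{eq:epsilon-estimate}) yields the uniform tail bounds $\P(L_k/M_{k-1}<1/t\mid M_{k-1})\le C/t$ and $\P(L_k/L_{k+1}>t)\le C/t$ for $t$ large, so $\log(1+M_{k-1}/L_k)$ and $\log(1+L_k/L_{k+1})$ have exponential-type tails uniformly in $k$ and hence bounded moments of every order. For (i), Lemma~\ref{lemma:5rv-cv-in-law} combined with the continuous mapping theorem gives $Q_k\to Q_\infty$ in distribution, and the uniform $L^2$-bound from (iii) upgrades this to $\E[Q_k]\to\E[Q_\infty]$ by uniform integrability.

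The heart of the argument is (ii). Setting $V:=\mathcal{R}'/(1+\mathcal{R}')$ (which has density $2(1-v)$ on $[0,1]$), one checks from~(\ref{eq:c*-rde}) that
\[
1-\frac{1}{1+\mathcal{R}'(\mathcal{C}'+\widehat{\mathcal{C}})} = V\widetilde{\mathcal{C}}, \qquad \widetilde{\mathcal{C}}:=\frac{\mathcal{C}'+\widehat{\mathcal{C}}}{1-V+V(\mathcal{C}'+\widehat{\mathcal{C}})},
\]
where $\widetilde{\mathcal{C}}\stackrel{(\mathrm{d})}{=}\widehat{\mathcal{C}}$ and the triple $(\widetilde{\mathcal{C}},\mathcal{C},1/\mathcal{R})$ has the joint law of $(\widehat{\mathcal{C}},\mathcal{C},1/\mathcal{R})$ with these three factors independent. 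Combined with the elementary identity $(\mathcal{C}+1/\mathcal{R})\cdot\mathcal{R}'/(1+\mathcal{R}') = V(\mathcal{C}+1/\mathcal{R})$, one gets $Q_\infty = \log V + \log(\widetilde{\mathcal{C}}+\mathcal{C}+1/\mathcal{R})$ and therefore
\[
\E[Q_\infty] = \E[\log V] + \E[\log(\widehat{\mathcal{C}}+\mathcal{C}+1/\mathcal{R})].
\]
The first expectation is $\int_0^1 2(1-v)\log v\,dv = -3/2$. For the second, I would invoke~(\ref{eq:g-c-c*}) with $g=\log$ to obtain $\lambda/2 = \E[\log((\widehat{\mathcal{C}}+\mathcal{C})/\widehat{\mathcal{C}})]$, and then apply~(\ref{eq:c*-rde}) once more to rewrite this as $\E[\log(1-V+V(\widehat{\mathcal{C}}+\mathcal{C}))]$. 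It then remains to verify the deterministic identity
\[
\E_\mathcal{R}[\log(y+1/\mathcal{R})] - \E_V[\log(1-V+Vy)] = \tfrac{3}{2}, \qquad y\ge 2,
\]
which, via the change of variable $v=1/(1+r)$ (sending the density $2/(1+r)^3$ of $\mathcal{R}$ to the density $2v$ of $1-V$), reduces to the elementary computation $-\int_0^1 2v\log(1-v)\,dv = 3/2$.

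\textbf{Main obstacle.} The critical algebraic step is the identification $1-B_\infty = V\widetilde{\mathcal{C}}$ with $\widetilde{\mathcal{C}}\stackrel{(\mathrm{d})}{=}\widehat{\mathcal{C}}$ and $\widetilde{\mathcal{C}}$ \emph{jointly independent} of $(\mathcal{C},\mathcal{R})$: this is the non-trivial use of the distributional equation~(\ref{eq:c*-rde}), and requires careful bookkeeping of dependences among the five independent random variables appearing in $Q_\infty$. Once this identification is in hand, all remaining integrals are explicit, and the tail estimates in (iii) are routine but slightly fiddly.
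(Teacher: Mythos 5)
Your proof is correct and follows essentially the same route as the paper. For (i) and (iii) both arguments rest on the distributional convergence of Lemma~\ref{lemma:5rv-cv-in-law} plus uniform moment bounds obtained from $\log(1+x)$ controls and polynomial tail estimates on $M_{k-1}/L_k$ and $M_k/L_{k+1}$ coming from~(\ref{eq:LMproduct}) and~(\ref{eq:epsilon-estimate}); your observation that the resulting logarithmic variables have exponential-type tails uniformly in $k$ is a slightly stronger packaging than the paper's use of $\log(1+x)\leq A+x^{1/4}$ combined with finiteness of $\E[(M_k/L_{k+1})^{\alpha}]$ for some $\alpha<1$, but the substance is identical. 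For (ii) the crucial algebraic step is the same in both: recognizing, via~(\ref{eq:c*-rde}), that $1-(1+\mathcal{R}'(\mathcal{C}'+\widehat{\mathcal{C}}))^{-1}=V'\widetilde{\mathcal{C}}$ with $\widetilde{\mathcal{C}}\stackrel{(\mathrm{d})}{=}\widehat{\mathcal{C}}$ independent of $(\mathcal{C},\mathcal{R})$, so that $\E[Q_\infty]=\E[\log V']+\E\big[\log(\mathcal{C}+1/\mathcal{R}+\widehat{\mathcal{C}})\big]$, then invoking~(\ref{eq:lambda-log}). Where you diverge: the paper writes $1/\mathcal{R}=(1-V)/V$ and uses the symmetry $V\stackrel{(\mathrm{d})}{=}V'$ to cancel the two logarithmic terms, whereas you compute $\E[\log V']=-3/2$ explicitly and verify the deterministic integral identity $\E_\mathcal{R}[\log(y+1/\mathcal{R})]-\E_V[\log(1-V+Vy)]=3/2$. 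This is a more calculational route to the same conclusion, and it is correct.

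One small misstatement in your (iii): the term $\ell_k/(c_{k-1}+h_{k-1})$ should be bounded above by $M_{k-1}/(2L_k)$ using the lower bounds $c_{k-1},h_{k-1}\geq 1/M_{k-1}$; the quantity $\log(1+M_{k-1}h_{k-1})$ does not actually arise from your decomposition of $|Q_k|$ (and indeed one wants $h_{k-1}$ bounded \emph{below}, not above, at this point, so Lemma~\ref{lemma:L2-bdd-hk} is not what is needed here). This is a slip in bookkeeping rather than a gap, since the correct bound is in fact more favorable.
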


\begin{proof}
(i) On the one hand, since $\frac{M_k}{L_k}>1$,
\begin{displaymath}
Q_k\leq \log \bigg(1+ \frac{M_kc_k+\frac{M_k}{L_{k+1}}}{\frac{M_k}{L_k}}\bigg) \leq \log \Big( 1+M_kc_k+\frac{M_k}{L_{k+1}}\Big).
\end{displaymath}
On the other hand,
\begin{align*}
Q_k \,\geq & \,\,\log \bigg(1- \frac{\frac{M_k}{L_k}}{\frac{M_k}{L_k}+\frac{M_k}{M_{k-1}}(M_{k-1}c_{k-1}+M_{k-1}h_{k-1})}\bigg) \\
=& \,\,-\log \bigg(1+ \frac{\frac{M_k}{L_k}}{\frac{M_k}{M_{k-1}}(M_{k-1}c_{k-1}+M_{k-1}h_{k-1})}\bigg) \,=\,-\log \bigg(1+ \frac{\frac{M_{k-1}}{L_k}}{M_{k-1}c_{k-1}+M_{k-1}h_{k-1}}\bigg).
\end{align*}
Using the facts that $M_{k-1}c_{k-1}\geq 1$ and $M_{k-1}h_{k-1}\geq 1$, we arrive at
\begin{displaymath}
|Q_k|\leq \max\bigg\{\log \Big( 1+M_kc_k+\frac{M_k}{L_{k+1}}\Big),\log \Big(1+ \frac{M_{k-1}}{2L_k}\Big) \bigg\}.
\end{displaymath}
One can find $A>0$ such that $\log(1+x)\leq A+x^{1/2}$ for every $x>0$. It follows that
\begin{equation}
\label{eq:Q-bound}
|Q_k|\leq A+\Big(M_kc_k+\frac{M_k}{L_{k+1}}\Big)^{\frac{1}{2}}+\Big(\frac{M_{k-1}}{L_k}\Big)^{\frac{1}{2}} \leq A+ (M_kc_k)^{\frac{1}{2}}+\Big(\frac{M_k}{L_{k+1}}\Big)^{\frac{1}{2}}+\Big(\frac{M_{k-1}}{L_k}\Big)^{\frac{1}{2}}.
\end{equation}

Recall the convergence in distribution $M_kc_k\overset{(\mathrm{d})}{\to} \mathcal{C}$ shown in Lemma~\ref{lemma:5rv-cv-in-law}. By Lemma~\ref{lem:L2-gw-condct}, it follows that
\begin{equation}
\label{eq:C-mean-cv}
\E[M_kc_k] \, \build{\longrightarrow}_{k\to\infty}^{}\, \E[\mathcal{C}].
\end{equation}
In particular, $\sup_k\E[M_kc_k]<\infty$. Meanwhile, using (\ref{eq:epsilon-estimate}) and~(\ref{eq:LMproduct}), it is not difficult to verify that there exists a positive constant $K$ such that for every $x>0$,
\begin{displaymath}
\sup_{k\geq 1} \, \P\Big( \frac{M_k}{L_{k+1}}>x \Big)\leq \frac{K}{1+x}.
\end{displaymath}
So using the formula
\begin{displaymath}
\E\bigg[\Big(\frac{M_k}{L_{k+1}}\Big)^{\alpha}\bigg]= \int_0^{\infty} \alpha x^{\alpha-1} \,\P\Big( \frac{M_k}{L_{k+1}}>x\Big) \,\mathrm{d}x\,,
\end{displaymath}
we get the existence of a constant $\alpha\in (\frac{1}{2},1)$ such that
\begin{equation}
\label{eq:1/2-unif-bound}
\sup_{k\geq 1} \, \E \bigg[\Big(\frac{M_k}{L_{k+1}}\Big)^{\alpha}\bigg] <\infty.
\end{equation}
Hence, it follows from (\ref{eq:Q-bound}) that $(Q_k)_{k\geq 2}$ is bounded in $L^p$ with some $p>1$. The sequence $(Q_k)_{k\geq 2}$ is thus uniformly integrable. However, according to Lemma~\ref{lemma:5rv-cv-in-law}, $Q_k$ converges in distribution to~$Q_{\infty}$. Therefore, $Q_k\to Q_{\infty}$ in $L^1$ and we have
\begin{displaymath}
\lim_{k\to \infty}\E[Q_k] =\E[Q_{\infty}].
\end{displaymath}

(ii) Recall that $V=\frac{\mathcal{R}}{1+\mathcal{R}}$ and $V'=\frac{\mathcal{R}'}{1+\mathcal{R}'}$ are independent with the same law of density $2(1-x)$ on $[0,1]$. Noting that
\begin{eqnarray*}
\log \bigg( 1+ \frac{\mathcal{C}+\frac{1}{\mathcal{R}}}{\frac{1+\mathcal{R}'}{\mathcal{R}'}}- \frac{1}{1+\mathcal{R}'\big(\mathcal{C}'+\widehat{\mathcal{C}}\,\big)} \bigg) & =& \log \bigg( \Big(\mathcal{C}+\frac{1}{\mathcal{R}} \Big)V'+ \frac{V'(\mathcal{C}'+\widehat{\mathcal{C}})}{1-V'+V'(\mathcal{C}'+\widehat{\mathcal{C}})}\bigg)\\
&=& \log V' +\log\bigg( \mathcal{C}+\frac{1}{\mathcal{R}}+\Big(V'+\frac{1-V'}{\mathcal{C}'+\widehat{\mathcal{C}}}\Big)^{-1}\bigg),
\end{eqnarray*}
we can use the distributional identity (\ref{eq:c*-rde}) to obtain
\begin{displaymath}
\E\big[Q_{\infty}\big]=\E\big[\log V'\big]+\E\Big[\log \big(\mathcal{C}+\frac{1}{\mathcal{R}}+\widehat{\mathcal{C}}\big)\Big].
\end{displaymath}
Since $\mathcal{R}=\frac{V}{1-V}$, it follows that
\begin{displaymath}
\log \big(\mathcal{C}+\frac{1}{\mathcal{R}}+\widehat{\mathcal{C}}\big) =\log (1-V+V(\mathcal{C}+\widehat{\mathcal{C}}))-\log V,
\end{displaymath}
which yields $\E\big[Q_{\infty}\big] = \E\big[\log (1-V+V(\mathcal{C}+\widehat{\mathcal{C}}))\big]$. To complete the proof of assertion (ii), we apply (\ref{eq:c*-rde}) again to see that
\begin{displaymath}
\E\big[\log (1-V+V(\mathcal{C}+\widehat{\mathcal{C}}))\big]=  \E\big[\log (\widehat{\mathcal{C}}+\mathcal{C})\big]- \E\big[\log(\widehat{\mathcal{C}})\big]= \E\Big[\log \Big(\frac{\widehat{\mathcal{C}}+\mathcal{C}}{\widehat{\mathcal{C}}}\Big)\Big],
\end{displaymath}
which is equal to $\lambda/2$ according to (\ref{eq:lambda-log}).

(iii)  There exists a constant $\widetilde A>0$ such that $\log(1+x)\leq \widetilde A+x^{1/4}$ for every $x>0$. It follows then from the same arguments as in the proof of assertion (i) that
\begin{align*}
|Q_iQ_j| \,\leq\, &\, \bigg(\log \Big( 1+M_ic_i+\frac{M_i}{L_{i+1}}\Big)+\log \Big(1+ \frac{M_{i-1}}{2L_i}\Big)\bigg)\bigg(\log \Big( 1+M_jc_j+\frac{M_j}{L_{j+1}}\Big)+\log \Big(1+ \frac{M_{j-1}}{2L_j}\Big)\bigg)\\
\leq\, &\, \bigg( 2\widetilde A +\Big(M_ic_i+\frac{M_i}{L_{i+1}}\Big)^{\frac{1}{4}}+\Big(\frac{M_{i-1}}{L_i}\Big)^{\frac{1}{4}}\bigg) \bigg( 2\widetilde A +\Big(M_jc_j+\frac{M_j}{L_{j+1}}\Big)^{\frac{1}{4}}+\Big(\frac{M_{j-1}}{L_j}\Big)^{\frac{1}{4}}\bigg).
\end{align*}
In order to prove that $\sup_{i,j}\E[|Q_iQ_j|]< \infty$, it is enough to develop the product in the last line of the preceding display, and to show that the expectation of each term is uniformly bounded with respect to $i$ and $j$. In fact, by the Cauchy--Schwarz inequality,
\begin{displaymath}
\E\bigg[\Big(\frac{M_{i-1}}{L_i}\Big)^{\frac{1}{4}}\Big(\frac{M_{j-1}}{L_j}\Big)^{\frac{1}{4}}\bigg] \leq \E\bigg[\Big(\frac{M_{i-1}}{L_i}\Big)^{\frac{1}{2}}\bigg]^{\frac{1}{2}} \E\bigg[\Big(\frac{M_{j-1}}{L_j}\Big)^{\frac{1}{2}}\bigg]^{\frac{1}{2}},
\end{displaymath}
in which the right-hand side is uniformly bounded according to (\ref{eq:1/2-unif-bound}). Moreover, as
\begin{displaymath}
\E\bigg[\Big(M_ic_i+\frac{M_i}{L_{i+1}}\Big)^{\frac{1}{4}} \Big(M_jc_j+\frac{M_j}{L_{j+1}}\Big)^{\frac{1}{4}} \bigg] \leq \E \bigg[\Big((M_ic_i)^{\frac{1}{4}}+\Big(\frac{M_i}{L_{i+1}}\Big)^{\frac{1}{4}}\Big) \Big((M_jc_j)^{\frac{1}{4}}+\Big(\frac{M_j}{L_{j+1}}\Big)^{\frac{1}{4}}\Big) \bigg],
\end{displaymath}
we can develop the right-hand side and similarly use (\ref{eq:C-mean-cv}) and~(\ref{eq:1/2-unif-bound}) to show that it is uniformly bounded. All the other terms can be treated in an analogous way. By similar arguments, one can also prove that $\sup_{i,j}\E[(Q_iQ_j)^2]<\infty$. This finishes the proof of assertion~(iii).
\end{proof}

Using assertions (i) and (ii) of Lemma~\ref{lemma:auxiliary-L1}, we have thus
\begin{equation}
\label{eq:lim-L1}
\lim_{k\to \infty}\E\bigg[\frac{1}{k}\sum\limits_{j=2}^k Q_j\bigg]= \lim_{k\to \infty}\frac{1}{k}\sum\limits_{j=2}^k \E[Q_j] =\E[Q_{\infty}]=\frac{\lambda}{2}.
\end{equation}

\begin{lemma}
\label{lemma:L2-limsup}
We have
\begin{equation}
\label{eq:L2-limsup-Q}
\limsup_{k\to \infty} \E\bigg[\Big(\frac{1}{k}\sum\limits_{j=2}^k Q_j\Big)^2\bigg]\leq \big(\E[Q_{\infty}]\big)^2=\frac{\lambda^2}{4}.
\end{equation}
\end{lemma}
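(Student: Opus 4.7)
The plan is to expand the second moment as
$$\E\bigg[\Big(\frac{1}{k}\sum_{j=2}^k Q_j\Big)^2\bigg] = \frac{1}{k^2}\sum_{2\leq i,j\leq k}\E[Q_iQ_j],$$
and to reduce the statement to the asymptotic factorization
$$\E[Q_iQ_j] \longrightarrow (\E[Q_\infty])^2 \quad\text{as}\quad \min(i,j)\to\infty \text{ and } |i-j|\to\infty.$$
Once this factorization is established, the uniform bound $\sup_{i,j}\E[|Q_iQ_j|]<\infty$ from Lemma~\ref{lemma:auxiliary-L1}(iii) controls the ``boundary'' pairs: for any fixed $N$, those with $\min(i,j)\leq N$ or $|i-j|\leq N$ number at most $O(Nk)$ and contribute $O(N/k)$ to the normalized sum. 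A Cesaro argument then gives $k^{-2}\sum_{i,j}\E[Q_iQ_j]\to (\E[Q_\infty])^2 = \lambda^2/4$, which in particular yields the desired limsup bound (and in fact an equality).

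To prove the asymptotic factorization, I would follow the truncation strategy from the proof of Lemma~\ref{lemma:5rv-cv-in-law}. For a fixed integer $J\geq 1$, introduce
$$Q_k^{(J)} \colonequals \log\bigg(1+\frac{c_k+\ell_{k+1}}{\ell_k}-\frac{\ell_k}{\ell_k+c_{k-1}+h_{k-1}^{(J)}}\bigg),$$
obtained from $Q_k$ by replacing $h_{k-1}$ with the truncated conductance $h_{k-1}^{(J)}$ defined in the proof of Lemma~\ref{lemma:5rv-cv-in-law}. Then $Q_k^{(J)}$ is measurable with respect to the portion of $\widecheck{\mathsf{T}}$ spanned by $\mathbf{u}_{M_{k-J-1}},\ldots,\mathbf{u}_{M_{k+1}}$, together with all grafted subtrees at those spine vertices. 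For $j\geq i+J+2$, by the Markov property of the chain $(M_{k-1},L_k)_{k\geq 1}$ introduced in Section~\ref{sec:backward-size-biased}, the variables $Q_i^{(J)}$ and $Q_j^{(J)}$ are conditionally independent given $M_{i+1}$. Extending the Markov-chain analysis of Lemma~\ref{lemma:5rv-cv-in-law} to two well-separated blocks (which amounts to coupling the two shifted chains with the same stationary limit) and combining with the uniform integrability argument of Lemma~\ref{lemma:auxiliary-L1}(i), I would deduce $\E[Q_i^{(J)} Q_j^{(J)}] \to (\E[Q_\infty^{(J)}])^2$ as $\min(i,j)\to\infty$ with $j-i\geq J+2$, where $Q_\infty^{(J)}$ is defined like $Q_\infty$ but with $\widehat{\mathcal{C}}$ replaced by $\widehat{\mathcal{C}}^{(J)}:=\widehat\Phi^J(\delta_\infty)$. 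Finally $\E[Q_\infty^{(J)}]\to\E[Q_\infty]$ as $J\to\infty$ by assertion~(1) of Proposition~\ref{prop:c*-law} together with uniform integrability.

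The main obstacle will be the passage from the truncated to the original functional, namely showing $\E[Q_iQ_j - Q_i^{(J)}Q_j^{(J)}]\to 0$ as $J\to\infty$, uniformly in $i,j$, so that the two limits $\min(i,j),|i-j|\to\infty$ and $J\to\infty$ can be interchanged. This calls for a quantitative bound $\sup_k \E[(Q_k - Q_k^{(J)})^2] \to 0$ as $J\to\infty$, which in turn requires upgrading the probability estimate $\P(|M_{k-1}h_{k-1}-M_{k-1}h_{k-1}^{(J)}|\geq\eta)\to 0$ from the proof of Lemma~\ref{lemma:5rv-cv-in-law} into a moment bound uniform in $k$. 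Combining this with the uniform $L^2$ control on $M_{k-1}h_{k-1}$ from Lemma~\ref{lemma:L2-bdd-hk}, the $L^\alpha$-bound on $M_k/L_{k+1}$ from~(\ref{eq:1/2-unif-bound}), and the moment bound on $M_k c_k$ from Lemma~\ref{lem:L2-gw-condct}, one obtains the required uniform estimate. The uniform integrability of $\{Q_iQ_j\}$ furnished by Lemma~\ref{lemma:auxiliary-L1}(iii) then legitimizes the exchange of limits and closes the argument.
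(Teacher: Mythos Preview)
Your overall strategy coincides with the paper's: expand the square, discard near-diagonal and small-index pairs using Lemma~\ref{lemma:auxiliary-L1}(iii), truncate $h_{k-1}$ so that $Q_k$ becomes ``local'' and decouples from $Q_i$ conditionally on $M_{i+1}$, factor, and finally remove the truncation. The difference lies in the choice of truncation. You truncate after a fixed \emph{number} $J$ of branching levels (reusing $h_{k-1}^{(J)}$ from the proof of Lemma~\ref{lemma:5rv-cv-in-law}), whereas the paper truncates at a fixed \emph{fraction} $\varepsilon$ of the height, defining $h_{k-1}^{\varepsilon}=\mathcal{C}_{M_{k-1}-1-\lfloor\varepsilon M_{k-1}\rfloor}([\widecheck{\mathsf{T}}]^{*(M_{k-1}-1)})$ and the corresponding $Q_k^{\varepsilon}$. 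On the event $\{M_{i+1}<\varepsilon M_{j-1}\}$ (whose complement has vanishing probability by~(\ref{eq:bad-proba-0})), $Q_j^{\varepsilon}$ depends on the tree only below $\mathbf{u}_{M_{i+1}}$, which yields the conditional independence and, after uniform integrability, the factorization $\E[Q_iQ_j^{\varepsilon}\mathbf{1}_{\{\cdot\}}]\to\E[Q_\infty]\E[Q_\infty^{\varepsilon}]$.

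The payoff of the paper's truncation is the error bound: a short conductance comparison gives $0\le Q_k^{\varepsilon}-Q_k\le 2\varepsilon\,M_{k-1}h_{k-1}$, so $\sup_k\|Q_k^{\varepsilon}-Q_k\|_2\le 2\varepsilon\,\sup_k\|M_{k-1}h_{k-1}\|_2$, which is finite by Lemma~\ref{lemma:L2-bdd-hk} and goes to $0$ with $\varepsilon$. By contrast, your bound is $0\le Q_k^{(J)}-Q_k\le M_{k-J-1}/(M_{k-1}-M_{k-J-1})$, and the ``main obstacle'' you identify is real: none of the lemmas you cite controls the second moment of this ratio. Lemma~\ref{lemma:L2-bdd-hk} bounds $\E[(M_kh_k)^2]$, Lemma~\ref{lem:L2-gw-condct} bounds $\E[(M_kc_k)^2]$, and~(\ref{eq:1/2-unif-bound}) only gives $\sup_k\E[(M_{k-1}/L_k)^{\alpha}]<\infty$ for some $\alpha<1$; indeed $M_{k-1}/L_k$ has infinite first moment in the limit, so the crude bound $M_{k-J-1}/(M_{k-1}-M_{k-J-1})\le M_{k-2}/L_{k-1}$ is useless in $L^2$. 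One can prove the required uniform estimate by exploiting the product structure $M_{k-1}/M_{k-J-1}=\prod_{l=1}^{J}(1+L_{k-l}/M_{k-l-1})$ and the fact that each factor exceeds a fixed constant with uniformly positive probability (the same device as in the proof of Lemma~\ref{lemma:L2-bdd-hk}), but this is an additional lemma you would have to supply. The paper's height-based truncation sidesteps this entirely.
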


\begin{proof}
We first note that for every $\delta\in(0,1/2)$,
\begin{displaymath}
\Bigg| \E\Big[\sum\limits_{2\leq i,j\leq k}^{} Q_iQ_j\Big]- \E\Big[ \build{\sum_{\delta k\leq i,j\leq k}}_{|i-j|>\delta k}^{} Q_iQ_j\Big] \Bigg|\leq  4\delta k^2 \times \sup_{i,j} \E\big[|Q_iQ_j|\big].
\end{displaymath}
In view of Lemma~\ref{lemma:auxiliary-L1} (iii), the estimate~(\ref{eq:L2-limsup-Q}) will be proved if we can show for arbitrarily small $\delta\in(0,1/2)$ that
\begin{equation}
\label{eq:1st-reduction}
\limsup_{k\to \infty} \frac{1}{k^2}\,\E\bigg[\build{\sum_{\delta k\leq  i,j\leq k}}_{|i-j|>\delta k}^{} Q_iQ_j \bigg]\leq \big(\E[Q_{\infty}]\big)^2.
\end{equation}
We thus fix $\delta\in(0,1/2)$ in the following arguments. By symmetry, we can further restrict our attention to the indices $i$ and $j$ such that $\delta k\leq  i,j\leq k$ and $j-i>\delta k$.

Notice that by the Cauchy--Schwarz inequality,
\begin{align}
\Bigg|\frac{1}{k^2} \,\E\bigg[\build{\sum_{\delta k\leq  i,j\leq k}}_{j-i>\delta k}^{} Q_iQ_j\mathbf{1}_{\{M_{i+1}\geq \varepsilon M_{j-1}\}} \bigg]  \Bigg| &\leq  \frac{1}{k^2} \build{\sum_{\delta k\leq  i,j\leq k}}_{j-i>\delta k}^{}  \P(M_{i+1}\geq \varepsilon M_{j-1})^{\frac{1}{2}} \E\big[(Q_iQ_j)^2\big]^{\frac{1}{2}} \nonumber\\
&\leq  \build{\sup_{\delta k\leq  i,j\leq k}}_{j-i>\delta k}^{}  \P(M_{i+1}\geq \varepsilon M_{j-1})^{\frac{1}{2}} \times \sup_{i,j}\E\big[(Q_iQ_j)^2\big]^{\frac{1}{2}}. \label{eq:QiQj-estimate}
\end{align}
However, observe that Lemma~\ref{lem:kn-asymptotic} can be reformulated as
\begin{displaymath}
\frac{\log M_k}{k} \, \build{\longrightarrow}_{k\to\infty}^{\P-\mathrm{a.s.}}\, \frac{1}{2},
\end{displaymath}
and it follows that for all $\varepsilon\in (0,1)$,

\begin{equation}
\label{eq:bad-proba-0}
\lim_{k\to\infty}\P\big(\mbox{there exist } i,j\in [\delta k,k] \mbox{ with } j-i> \delta k \mbox{ such that } M_{i+1}\geq \varepsilon M_{j-1} \big)=0.
\end{equation}
Together with Lemma~\ref{lemma:auxiliary-L1} (iii), the latter display implies that the right-hand side of (\ref{eq:QiQj-estimate}) converges to 0 as $k\to \infty$. The proof of~(\ref{eq:1st-reduction}) is thus reduced to showing that for fixed $\delta$,
\begin{equation}
\label{eq:2nd-reduction}
\limsup_{\varepsilon \to 0}\Bigg( \limsup_{k\to \infty} \frac{2}{k^2}\,\E\bigg[\build{\sum_{\delta k\leq  i,j\leq k}}_{j-i>\delta k}^{} Q_iQ_j\mathbf{1}_{\{M_{i+1}< \varepsilon M_{j-1}\}} \bigg]\Bigg) \leq \big(\E[Q_{\infty}]\big)^2.
\end{equation}

To this end, we take $\varepsilon\in (0,1/2)$ and define, for every $k\geq 2$,
\begin{displaymath}
h_k^{\varepsilon}= h_k^{\varepsilon}(\widecheck{\mathsf{T}}) \colonequals \mathcal{C}_{M_k-1-\lfloor \varepsilon M_k\rfloor }([\widecheck{\mathsf{T}}]^{*(M_k-1)})
\end{displaymath}
where $[\widecheck{\mathsf{T}}]^{*(M_k-1)}$ stands for the reduced tree associated with $[\widecheck{\mathsf{T}}]^{M_k-1}$ up to height $M_k-1$. In other words, $h_k^{\varepsilon}$ is the probability that a simple random walk on $\widecheck{\mathsf{T}}$ starting from $\mathbf{u}_{M_k-1}$ hits a point of generation $-\lfloor \varepsilon M_k \rfloor$ that has a descendant at generation 0 before hitting $\mathbf{u}_{M_k}$. Comparing with the definition of $h_k$, it is clear that $h_k^{\varepsilon}\geq h_k$. On the other hand, by similar arguments as in the proof of Proposition~\ref{prop:cv-conductance}, we obtain
\begin{displaymath}
h_k^{\varepsilon}-h_k \leq \frac{\lfloor \varepsilon M_k \rfloor}{M_k} h_k^{\varepsilon},
\end{displaymath}
which entails that
\begin{equation}
\label{eq:h-estimation}
M_k h_k^{\varepsilon}-M_kh_k \leq \lfloor \varepsilon M_k \rfloor h_k^{\varepsilon} \leq 2 \varepsilon M_k h_k .
\end{equation}

We set
\begin{displaymath}
Q_k^{\varepsilon}= Q_k^{\varepsilon} (\widecheck{\mathsf{T}}) \colonequals \log \bigg(1+ \frac{M_kc_k+\frac{M_k}{L_{k+1}}}{\frac{M_k}{L_k}}- \frac{\frac{M_k}{L_k}}{\frac{M_k}{L_k}+\frac{M_k}{M_{k-1}}(M_{k-1}c_{k-1}+M_{k-1}h_{k-1}^{\varepsilon})}\bigg) \geq Q_k.
\end{displaymath}
Using the elementary inequality $0\leq \log x-\log y\leq \frac{x-y}{y}$ for $x\geq y>0$, we see that
\begin{align*}
Q_k^{\varepsilon}-Q_k \leq & \, \frac{\frac{M_k}{L_k}}{M_kc_k+\frac{M_k}{L_{k+1}}} \bigg( \frac{\frac{M_k}{L_k}}{\frac{M_k}{L_k}+\frac{M_k}{M_{k-1}}(M_{k-1}c_{k-1}+M_{k-1}h_{k-1})}-\frac{\frac{M_k}{L_k}}{\frac{M_k}{L_k}+\frac{M_k}{M_{k-1}}(M_{k-1}c_{k-1}+M_{k-1}h_{k-1}^{\varepsilon})}\bigg)\\
\leq &\, \frac{\big(\frac{M_k}{L_k}\big)^2}{M_kc_k+\frac{M_k}{L_{k+1}}} \cdot \frac{\frac{M_k}{M_{k-1}}(M_{k-1}h_{k-1}^{\varepsilon}-M_{k-1}h_{k-1})}{\big(\frac{M_k}{L_k}+\frac{M_k}{M_{k-1}}(M_{k-1}c_{k-1}+M_{k-1}h_{k-1}) \big)^2}.
\end{align*}
Taking account of the easy facts that $M_k c_k\geq 1$ and $M_k=M_{k-1}+L_k$, we obtain
\begin{displaymath}
Q_k^{\varepsilon}-Q_k\leq \frac{\big(\frac{M_k}{L_k}\big)^2\frac{M_k}{M_{k-1}}(M_{k-1}h_{k-1}^{\varepsilon}-M_{k-1}h_{k-1})}{\Big(\frac{M_k}{L_k}+\frac{M_k}{M_{k-1}} \Big)^2} \leq M_{k-1}h_{k-1}^{\varepsilon}-M_{k-1}h_{k-1},
\end{displaymath}
which, together with~(\ref{eq:h-estimation}), implies that
\begin{equation*}
Q_k^{\varepsilon}-Q_k \leq  2\varepsilon M_{k-1}h_{k-1}.
\end{equation*}
This allows us to approximate $\E[Q_iQ_j\mathbf{1}_{\{M_{i+1}< \varepsilon M_{j-1}\}}]$ by $\E[Q_iQ_j^{\varepsilon}\mathbf{1}_{\{M_{i+1}< \varepsilon M_{j-1}\}}]$, because
\begin{align*}
\Big|\E\big[Q_iQ_j\mathbf{1}_{\{M_{i+1}< \varepsilon M_{j-1}\}}\big] -\E\big[Q_iQ_j^{\varepsilon}\mathbf{1}_{\{M_{i+1}< \varepsilon M_{j-1}\}}\big] \Big| \,\leq &\, \E\big[|Q_i(Q_j^{\varepsilon}-Q_j)|\big]\\
 \,\leq &\, \E\big[(Q_j^{\varepsilon}-Q_j)^2\big]^{\frac{1}{2}} \times  \E\big[(Q_i)^2\big]^{\frac{1}{2}} \\
 \,\leq &\, 2\varepsilon \,\E\big[(M_{j-1}h_{j-1})^2\big]^{\frac{1}{2}}\times  \E\big[(Q_i)^2\big]^{\frac{1}{2}} ,
\end{align*}
and the right-hand side converges to 0 uniformly with respect to $i,j$ and $k$ when $\varepsilon \to 0$, according to Lemma~\ref{lemma:L2-bdd-hk} and assertion~(iii) of Lemma~\ref{lemma:auxiliary-L1}.

Let us return to the indices $i,j$ such that $\delta k\leq i,j\leq k$ and $j-i>\delta k$, and let $\widetilde{\mathcal{F}}_i$ be the $\sigma$-field generated by the variable $M_{i+1}$ and the finite part of $\widecheck{\mathsf{T}}$ above the vertex $\mathbf{u}_{M_{i+1}}$. Informally, one can think of it as the $\sigma$-field generated by $[\widecheck{\mathsf{T}}]^{M_{i+1}}$. Then $Q_i$ is $\widetilde{\mathcal{F}}_i$-measurable and
\begin{equation}
\label{eq:Q-conditional}
\E\Big[Q_iQ_j^{\varepsilon}\mathbf{1}_{\{M_{i+1}< \varepsilon M_{j-1}\}}\Big]  = \E\Big[Q_i\,\E \Big[Q_j^{\varepsilon}\mathbf{1}_{\{M_{i+1}< \varepsilon M_{j-1}\}}\!\mid\! \widetilde{\mathcal{F}}_i \Big]\Big].
\end{equation}
At this point, we observe that
\begin{equation}
\label{eq:F-tilde-equation}
\E\Big[Q_j^{\varepsilon}\mathbf{1}_{\{M_{i+1}< \varepsilon M_{j-1}\}}\!\mid\! \widetilde{\mathcal{F}}_i \Big]=\E \Big[Q_j^{\varepsilon}\mathbf{1}_{\{M_{i+1}< \varepsilon M_{j-1}\}}\!\mid\! M_{i+1} \Big].
\end{equation}
On the other hand, one can generalize the proof of Lemma~\ref{lemma:5rv-cv-in-law} to show that
\begin{equation}
\label{eq:5tuple-cv-epsilon}
\Big(\frac{L_{j+1}}{M_j},\frac{L_j}{M_{j-1}},M_jc_j,M_{j-1}c_{j-1},M_{j-1}h_{j-1}^{\varepsilon}\Big) \, \build{\longrightarrow}_{j\to\infty}^{(\mathrm{d})}\, \big(\mathcal{R},\mathcal{R}',\mathcal{C},\mathcal{C}', \widehat{\mathcal{C}}_{\varepsilon} \big),
\end{equation}
where in the limit, the first four random variables $\mathcal{R},\mathcal{R}',\mathcal{C}, \mathcal{C}'$ are the same as in Lemma~\ref{lemma:5rv-cv-in-law}, whereas $\widehat{\mathcal{C}}_{\varepsilon}$ is distributed as $\mathcal{C}(\widehat \Delta_{\varepsilon})$. It is assumed in addition that $\mathcal{R},\mathcal{R}',\mathcal{C}, \mathcal{C}', \widehat{\mathcal{C}}_{\varepsilon}$ are independent under $\P$. Furthermore, we can verify that the convergence~(\ref{eq:5tuple-cv-epsilon}) is still valid if, instead of the distribution of
$$\Big(\frac{L_{j+1}}{M_j},\frac{L_j}{M_{j-1}},M_jc_j,M_{j-1}c_{j-1},M_{j-1}h_{j-1}^{\varepsilon}\Big),$$
we consider the conditional distribution of the same random $5$-tuple given $M_{i+1}$, and let $i$ and~$j$ tend to infinity satisfying that $j-i>\delta j$.

We define thus
\begin{displaymath}
Q_{\infty}^{\varepsilon} \colonequals \log \bigg( 1+ \frac{\mathcal{C}+\frac{1}{\mathcal{R}}}{\frac{1+\mathcal{R}'}{\mathcal{R}'}}- \frac{1}{1+\mathcal{R}'\big(\mathcal{C}'+\widehat{\mathcal{C}}_{\varepsilon}\,\big)} \bigg),
\end{displaymath}
which has the limiting distribution of the sequence $(Q_j^{\varepsilon})$ conditioned on $M_{i+1}$. By similar arguments used in the proof of assertion (i) of Lemma~\ref{lemma:auxiliary-L1}, we know that there exists some $p>1$ such that 
\begin{displaymath}
\sup_{\ell\colon \P(M_{i+1}=\ell)>0} \E\big[(Q_j^{\varepsilon})^p \!\mid\! M_{i+1}=\ell\,\big]
\end{displaymath}
is uniformly bounded for all $i,j$ satisfying that $j-i>\delta j$, and that 
\begin{displaymath}
\build{\lim_{i,j\to \infty}}_{j-i>\delta j}^{} \bigg(\sup_{\ell\colon \P(M_{i+1}=\ell)>0} \Big|\,\E\big[\,Q_j^{\varepsilon} \!\mid\! M_{i+1}=\ell\,\big]- \E\big[Q_{\infty}^{\varepsilon}\big]\Big|\bigg)=0.
\end{displaymath}
In view of~(\ref{eq:bad-proba-0}) and (\ref{eq:F-tilde-equation}), it follows that a.s.
\begin{displaymath}
\build{\lim_{i,j\to \infty}}_{j-i>\delta j}^{} \Big| \E\big[Q_j^{\varepsilon}\mathbf{1}_{\{M_{i+1}< \varepsilon M_{j-1}\}}\!\mid\! \widetilde{\mathcal{F}}_i \big] - \E\big[Q_{\infty}^{\varepsilon}\big]\Big|=0.
\end{displaymath}
Hence, we get from (\ref{eq:Q-conditional}) and Lemma~\ref{lemma:auxiliary-L1} (i) that
\begin{equation*}
\lim_{k\to \infty} \Bigg( \build{\sup_{i,j\in [\delta k,k]}}_{j-i>\delta k}^{} \E\big[Q_iQ_j^{\varepsilon}\mathbf{1}_{\{M_{i+1}< \varepsilon M_{j-1}\}}\big] \Bigg)\leq \E[Q_{\infty}]\E\big[Q_{\infty}^{\varepsilon}\big].
\end{equation*}

Finally, it remains to estimate the difference between $\E\big[Q_{\infty}^{\varepsilon}\big]$ and $\E[Q_{\infty}]$. To do this, we use a coupling argument by defining both $\widehat{\mathcal{C}}_{\varepsilon}$ and $\widehat{\mathcal{C}}$ from a common reduced tree $\widehat \Delta$, independent of $(\mathcal{R},\mathcal{R}',\mathcal{C}, \mathcal{C}')$, so that $\widehat{\mathcal{C}} =\mathcal{C}(\widehat \Delta)$ and $\widehat{\mathcal{C}}_{\varepsilon} =\mathcal{C}(\widehat \Delta_{\varepsilon})$. Since $\mathcal{C}\geq 1$ and $\widehat{\mathcal{C}}_{\varepsilon}\geq \widehat{\mathcal{C}}\geq 1$, one can proceed in the same way as we did for bounding $Q_k^{\varepsilon}-Q_k$, and arrive at
\begin{displaymath}
0\leq Q_{\infty}^{\varepsilon}-Q_{\infty} \leq \frac{1+\frac{1}{\mathcal{R}'}}{\mathcal{C}+\frac{1}{\mathcal{R}}} \frac{\mathcal{R}'(\widehat{\mathcal{C}}_{\varepsilon}-\widehat{\mathcal{C}})}{(1+\mathcal{R}'(\mathcal{C}'+\widehat{\mathcal{C}}))^2} \leq  \Big(1+\frac{1}{\mathcal{R}'}\Big) \frac{\mathcal{R}'(\widehat{\mathcal{C}}_{\varepsilon}-\widehat{\mathcal{C}})}{(1+\mathcal{R}')^2} \leq  \widehat{\mathcal{C}}_{\varepsilon}-\widehat{\mathcal{C}}.
\end{displaymath}
Taking account of~(\ref{eq:cond-continu-epsilon}), the last display gives $\big|\E\big[Q_{\infty}^{\varepsilon}\big]-\E[Q_{\infty}]\big|\leq 2\varepsilon\,\E[\widehat{\mathcal{C}}]$, and the right-hand side converges to 0 as $\varepsilon \to 0$.

According to the previous discussions, we conclude that
\begin{align*}
&\; \limsup_{\varepsilon \to 0}\Bigg( \limsup_{k\to \infty} \frac{2}{k^2}\E\bigg[\build{\sum_{\delta k\leq  i,j\leq k}}_{j-i>\delta k}^{} Q_iQ_j\mathbf{1}_{\{M_{i+1}< \varepsilon M_{j-1}\}} \bigg]\Bigg) \\
= &\; \limsup_{\varepsilon \to 0}\Bigg( \limsup_{k\to \infty} \frac{2}{k^2}\E\bigg[\build{\sum_{\delta k\leq  i,j\leq k}}_{j-i>\delta k}^{} Q_iQ_j^{\varepsilon}\mathbf{1}_{\{M_{i+1}< \varepsilon M_{j-1}\}} \bigg]\Bigg) \\
\leq &\;  \limsup_{\varepsilon \to 0} \E[Q_{\infty}] \E\big[Q^{\varepsilon}_{\infty}\big] \,=\, \big(\E[Q_{\infty}]\big)^2,
\end{align*}
which finishes the proof of~(\ref{eq:2nd-reduction}). The proof of Lemma~\ref{lemma:L2-limsup} is therefore completed.
\end{proof}

\smallskip
\noindent{\it Proof of Lemma~\ref{lem:sum-L2-conv}.} By combining~(\ref{eq:lim-L1}) and (\ref{eq:L2-limsup-Q}), we have
\begin{displaymath}
\limsup_{k\to\infty} \E\bigg[\Big(\frac{1}{k}\sum\limits_{j=2}^k Q_j -\frac{\lambda}{2} \Big)^2\bigg] \leq \bigg( \limsup_{k\to\infty} \E\bigg[\Big(\frac{1}{k}\sum\limits_{j=2}^k Q_j\Big)^2\bigg] \bigg) -\lambda \lim_{k\to\infty}\E\bigg[\frac{1}{k}\sum\limits_{j=2}^k Q_j\bigg]+ \frac{\lambda^2}{4} \leq 0,
\end{displaymath}
which gives the desired result.

\end{document}